\title{On the Well-posedness of a Class of Non-Autonomous SPDEs: An Operator-Theoretical Perspective}
\author{Rainer Picard\footnote{TU Dresden, Fachrichtung Mathematik, Insitut f{\"u}r Analysis, 01062 Dresden, Germany, rainer.oicard@tu-dresden.de}, Sascha Trostorff\footnote{TU Dresden, Fachrichtung Mathematik, Insitut f{\"u}r Analysis, 01062 Dresden, Germany, sascha.trostorff@tu-dresden.de}\phantom{a}and Marcus Waurick\footnote{University of Strathclyde, Livingstone Tower, Department of Mathematics and Statistics, G1 1XH, GLasgow, Scotland, marcus.waurick@strath.ac.uk}}
\date{Version of \today}
\let\geq\geqslant
\let\leq\leqslant
\newcommand*{\N}{\mathbb{N}}									%natural numbers
\newcommand*{\R}{\mathbb{R}}									%real numbers
\renewcommand*{\P}{\mathbb{P}}	
\renewcommand{\d}{\mathrm{d}}							%probability measure
\DeclareMathAccent{\Circ}{\mathalpha}{operators}{"17}
\newcommand{\interior}[1]{\Circ{#1}}
\DeclareMathOperator*{\dom}{dom}
\DeclareMathOperator*{\ran}{ran}
\DeclareMathOperator*{\kar}{ker}
\DeclareMathOperator{\grad}{grad}
\DeclareMathOperator{\curl}{curl}
\DeclareMathOperator{\Grad}{Grad}
\DeclareMathOperator{\dive}{div}
\DeclareMathOperator{\Dive}{Div}
\numberwithin{equation}{section}
\theoremstyle{plain}
\newtheorem{lem}{Lemma}[section]
\newtheorem{theorem}[lem]{Theorem}
\newtheorem{prop}[lem]{Proposition}
\newtheorem{cor}[lem]{Corollary}
\theoremstyle{definition}
\newtheorem{definition}[lem]{Definition}
\newtheorem{ass}[lem]{Assumption}
\theoremstyle{remark}
\newtheorem{remark}[lem]{Remark}
\newtheorem{ex}[lem]{Example}
\definecolor{rot}{rgb}{1,0,0}
\begin{document}
\maketitle

\begin{abstract}
	We further elaborate on the solvability of stochastic partial differential
equations (SPDEs). We shall discuss non-autonomous partial differential
equations with an abstract realization of the stochastic integral
on the right-hand side. Our approach allows the treatment of equations
with mixed type, where classical solution strategies fail to work.
The approach extends prior observations in {[}S\"u\ss, A. \& Waurick,
M. A Solution Theory for a General Class of SPDEs. \emph{Stochastics
and Partial Differential Equations: Analysis and Computations}, 2017,
5, 278-318{]}, where the respective results were obtained for linear
autonomous equations and (multiplicative) white noise. 
\end{abstract}

\textbf{2010 MSC:} Primary 60H15, 35R60, Secondary 35Q99, 35F46

\vspace*{0.125cm}

\textbf{Keywords:} stochastic partial differential equations, evolutionary
equations, stochastic equations of mathematical physics, weak solutions,
non-autonomous equations, non-linear equations, differential inclusions.

%\section*{Acknowledgments}

\section{Introduction}

In this article we discuss the well-posedness of and causality for
a class of non-autonomous partial differential equations/inclusions
perturbed with multiplicative noise. Our strategy is based on the
rationale outlined in \cite{SW16_SD}. In this reference equations of the following
type were discussed: 
\begin{equation}
\left(\partial_{0}M(\partial_{0}^{-1})+A\right)u=\int_{0}^{\cdot}\sigma(u)dB(s),\label{eq:auto}
\end{equation}
where $B$ is an appropriate (vector-valued) Brownian motion, $\sigma$
is a Lipschitz continuous mapping, $\partial_{0}$ is the time derivative,
$M\colon z\mapsto M(z)\in L(H)$ is an analytic function that allows
for defining $M(\partial_{0}^{-1})$ by means of an appropriate functional
calculus, and $A$ is a skew-selfadjoint operator in a Hilbert space
$H$. It has been shown that many standard stochastic partial differential
equations fit into the framework described by \eqref{eq:auto}.

In fact, the stochastic heat and wave equation with multiplicative
noise are special cases of \eqref{eq:auto}. In particular, it is
also possible to formulate a version of Maxwell's equation with multiplicative
noise.

In this article we will enlarge the admissible class of stochastic
partial differential equations towards non-autonomous or even non-linear
inclusions, which are subject to a stochastic perturbation of the
right-hand side.

In a nutshell, the strategy outlined in \cite{SW16_SD}, that is, a way to solving \eqref{eq:auto}, is to find a Hilbert space that
leads to 
\[
u\mapsto\left(\partial_{0}M(\partial_{0}^{-1})+A\right)^{-1}\int_{0}^{\cdot}\sigma(u)dB(s),
\]
being a strict contraction. In this exposition, an adapted result can be found in Theorem~\ref{thm:linst} (see also Theorem~\ref{thm:wp_stoch_nonlin}). In order to obtain the main result in \cite{SW16_SD}, a key observation is that the operator $\left(\partial_{0}M(\partial_{0}^{-1})+A\right)^{-1}$
is \emph{causal}, which implies that in the fixed point iteration
predictable processes are mapped to predictable processes, see also Theorem~\ref{thm:causal,adap}. The above mapping becomes a strict contraction as the Hilbert space setting is formulated in such a way that the Lipschitz constant of $u\mapsto\int_{0}^{\cdot}\sigma(u)dB(s)$ can be made
arbitrarily small, see Proposition~\ref{prop:int_evo} here.

In comparison to \cite{SW16_SD}, we shall not elaborate so much on
the classical notions of solving stochastic partial differential equations,
but rather refer the reader instead to standard monographs such as \cite{prevotroeckner,rozowskii,walsh}.

We shall describe the plan of this note next. After having exemplified typical applications of the rational developed in this manuscript, we establish -- as a another
key ingredient -- the time derivative as a normal and \emph{continuously
invertible} operator in exponentially weighted Hilbert spaces. We recall the notion of evolutionary mappings and causality and draw some interconnections of these concepts. The basic fixed point theorem to be applied to stochastic partial differential equations can be found in Theorem~\ref{thm:cmp}.

 Afterwards, in Section \ref{sec:deterministic}, we recall the essentials of the deterministic solution theory for
non-autonomous equations. We shall also mention a non-linear variant
of the solution theory at hand so that non-linear stochastic partial
differential inclusions can like-wise be considered. 

Section \ref{sec:stochastics} is devoted to the discussion of stochastic evolutionary equations. In Section~\ref{sec:SI}, we will set the stage for the probabilistic solution theory and rephrase the description of stochastic integration as outlined in \cite{Metivier1980}.  The solution theory for non-autonomous stochastic evolutionary equations is provided in Section~\ref{sec:stsee}. More precisely, Theorems~\ref{thm:linst} and~\ref{thm:wp_stoch_nonlin} are the main contributions of this manuscript substantially extending the main result of \cite{SW16_SD}. In Section~\ref{sec:wave}, we conclude this article by providing some examples, which might
be difficult -- if not impossible -- to treat with a more classical
approach.

\section{A glimpse on some particular results}

In order to describe particular applications of the well-posedness results in this manuscript, we need to introduce some operators from vector analysis realised as certain unbounded operators in Hilbert spaces.

Throughout, let $\Omega\subseteq \mathbb{R}^d$ be an open set for some integer $d>0$.

\begin{definition} We define
\begin{align*}
  & \interior{\grad}\colon H_0^1(\Omega)\subseteq L^2(\Omega) \to L^2(\Omega)^d, \phi\mapsto (\partial_i \phi)_{i\in\{1,\ldots,d\}},\\
 &   \interior{\Grad} \colon H_0^1(\Omega)^d \subseteq L^2(\Omega)^d \to L^2_{\textnormal{sym}}(\Omega)^{d\times d}, \Phi\mapsto \frac12(\nabla\Phi+\nabla\Phi^T),\\ &
   \curl\colon H(\curl,\Omega)\subseteq L^2(\Omega)^3 \to L^2(\Omega)^3, \Psi\mapsto \Big(\sum_{j,k\in\{1,2,3\}} \varepsilon_{ijk}\partial_j\Psi_k\Big)_{i\in\{1,2,3\}},
\end{align*}
where $H(\curl,\Omega)$ is the space of $L^2$-vector fields with distributional $\curl$ still being in $L^2$. The space $L^2_{\textnormal{sym}}(\Omega)^{d\times d}$ denotes the set of symmetric $d$-by-$d$ matrices with entries in $L^2(\Omega)$. 

We also put $\interior{\curl}\coloneqq \curl^*$, $\dive\coloneqq -\interior{\grad}^*$, and $\Dive\coloneqq -\interior{\Grad}^*$. We note that for $\Omega$ with sufficiently smooth boundary belonging to the domain of $\interior{\curl}$ corresponds to $H(\curl,\Omega)$-vector field with vanishing tangential component at the boundary. The definitions presented, however, do not require any regularity of the boundary.
\end{definition}

In all the examples to come, we assume that $G$ is a separable Hilbert space and that $L\subseteq L(G,H)$ satisfies the assumptions in \ref{ass:stochastics} below for either of the choices \[H \in \{ L^2(\Omega), L^2(\Omega)^3\oplus L^2(\Omega)^3, L^2(\Omega)^d\},\] which will be clear from the context. Moreover, we assume $\sigma\colon H\to L$ be Lipschitz continuous with $\sigma(0)=0$. In order to have a concrete example at hand, we shall choose $X=W$ to be the Wiener process with values in $G$ satisfying $W(t)=0$ for all $t\leq 0$. Note that in this case $\mathcal{F}$ is chosen to be the natural filtration of $W$ as in Example \ref{ex:sta}(a), which leads to $\alpha=\lambda\otimes \mathbb{P}$ ($\lambda$ denoting the Lebesgue measure on $\mathbb{R}_{\geq 0}$). For the definition of $\mathcal{I}_\nu^{W,\alpha}$ we refer to Section \ref{sec:SI}; also consult Corollary \ref{cor:inv_evo} for $\mathcal{I}^{W,\alpha}\circ\tilde{\sigma}$.

Also we refer to Section \ref{sec:tdem} for a definition of $\partial_{0,\nu}$ and to Definition \ref{def:pr}(c) for a definition of $L_{\nu,\textnormal{pr}}^2(\mathbb{R};L^2(\mathbb{R};H))$. 

\subsection{Standard linear examples}\label{sec:sle}

We start out with a first order formulation of the heat equation. Let $a\colon \mathbb{R}\to L(L^2(\Omega)^d)$ be bounded, Lipschitz continuous with $a(t)=a(t)^*\geq c$ for all $t\in\mathbb{R}$ and some $c>0$; denote $b(t)\coloneqq a(t)^{-1}$ and by $b'$ the weak derivative of $b$.

\begin{theorem}\label{thm:h} There exists $\nu>0$ such that for all $f\in L_{\nu,\textnormal{pr}}^2(\mathbb{R};L^2(\mathbb{P};L^2(\Omega)))$ we find a uniquely determined $u_f \in L_{\nu,\textnormal{pr}}^2(\mathbb{R};L^2(\mathbb{P};L^2(\Omega))$ and $q_f \in L_{\nu,\textnormal{pr}}^2(\mathbb{R};L^2(\mathbb{P};L^2(\Omega)^d)$ such that
\begin{multline*}
 \left(  \overline{ \partial_{0,\nu}\begin{pmatrix} 0 & 0 \\ 0 & b(\mathrm{m})
   \end{pmatrix} + \begin{pmatrix} 1 & 0 \\ 0 & -b'(\mathrm{m}) \end{pmatrix} + \begin{pmatrix} 0 & \dive \\ \interior{\grad} & 0 \end{pmatrix} }   \right)
   \begin{pmatrix} u_f \\ q_f \end{pmatrix} \\= \begin{pmatrix} \partial_{0,\nu}^{-1} f + \mathcal{I}^{W,\alpha}\circ \tilde{\sigma}(u_f) \\ 0 \end{pmatrix}.
\end{multline*}
\end{theorem}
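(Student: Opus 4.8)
The plan is to recognise the equation as a special case of the general linear stochastic solution theory, Theorem~\ref{thm:linst}, applied to the non-autonomous evolutionary operator $\overline{\partial_{0,\nu}M_0+M_1+A}$ on $L^2(\Omega)\oplus L^2(\Omega)^d$ (tensored with the identity on $L^2(\mathbb P)$), where
\[
M_0:=\begin{pmatrix}0&0\\0&b(\mathrm m)\end{pmatrix},\qquad M_1:=\begin{pmatrix}1&0\\0&-b'(\mathrm m)\end{pmatrix},\qquad A:=\begin{pmatrix}0&\dive\\\interior{\grad}&0\end{pmatrix}.
\]
The task then splits into two parts: (i) verify that $(M_0,M_1,A)$ meet the hypotheses of the deterministic solution theory of Section~\ref{sec:deterministic}, so that $S_\nu:=\overline{(\partial_{0,\nu}M_0+M_1+A)}^{-1}$ exists as a bounded causal (evolutionary) operator for all large $\nu$; and (ii) solve the resulting fixed point equation for the stochastic perturbation using Theorem~\ref{thm:cmp} and Proposition~\ref{prop:int_evo}.

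For (i): since $\dive=-\interior{\grad}^*$ one gets $A^*=-A$ on $\dom(\interior{\grad})\oplus\dom(\dive)$, so $A$ is skew-selfadjoint. Boundedness, Lipschitz continuity and $a(t)=a(t)^*\ge c$ give that $b(t)=a(t)^{-1}$ is bounded, selfadjoint, bounded below by $c':=\|a\|_\infty^{-1}>0$, and Lipschitz with $b'(\mathrm m)\in L^\infty$; thus $M_0=\operatorname{diag}(0,b(\mathrm m))\ge0$ is selfadjoint and Lipschitz. The decisive point is that $M_1$ is chosen so that the commutator of $\partial_{0,\nu}$ and $b(\mathrm m)$ is absorbed: the product rule yields on the domain
\[
\left(\partial_{0,\nu}M_0+M_1\right)\begin{pmatrix}\phi_0\\ \phi_1\end{pmatrix}=\begin{pmatrix}\phi_0\\ b(\mathrm m)\partial_{0,\nu}\phi_1+b'(\mathrm m)\phi_1-b'(\mathrm m)\phi_1\end{pmatrix}=\begin{pmatrix}\phi_0\\ b(\mathrm m)\partial_{0,\nu}\phi_1\end{pmatrix}.
\]
Differentiating $t\mapsto\langle b(t)\phi_1(t),\phi_1(t)\rangle e^{-2\nu t}$ and integrating over $\mathbb R$, using $b(t)=b(t)^*$, gives $2\Re\langle b(\mathrm m)\partial_{0,\nu}\phi_1,\phi_1\rangle_\nu=2\nu\langle b(\mathrm m)\phi_1,\phi_1\rangle_\nu-\langle b'(\mathrm m)\phi_1,\phi_1\rangle_\nu\ge(2\nu c'-\|b'\|_\infty)\|\phi_1\|_\nu^2$, whence
\[
\Re\big\langle(\partial_{0,\nu}M_0+M_1)\phi,\phi\big\rangle_\nu\ge\min\{1,\ \nu c'-\tfrac12\|b'\|_\infty\}\,\|\phi\|_\nu^2,
\]
which is strictly positive definite as soon as $\nu>\|b'\|_\infty/(2c')$. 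This is exactly the well-posedness condition required by the deterministic theory, which then produces the bounded causal inverse $S_\nu$, with $\|S_\nu\|$ bounded (in fact $\to0$) in $\nu$; the conclusion carries over to the $L^2(\mathbb P)$-tensored operator.

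For (ii), write $\iota_0$ and $\pi_0$ for the isometric embedding onto, and projection onto, the first component (and $\pi_1$ onto the second). A pair $(u,q)$ solves the equation exactly when $u$ is a fixed point of
\[
\Phi\colon v\mapsto\pi_0 S_\nu\iota_0\big(\partial_{0,\nu}^{-1}f+\mathcal I^{W,\alpha}\circ\tilde\sigma(v)\big)
\]
on $L^2_{\nu,\textnormal{pr}}(\mathbb R;L^2(\mathbb P;L^2(\Omega)))$, with $q=\pi_1 S_\nu\iota_0(\partial_{0,\nu}^{-1}f+\mathcal I^{W,\alpha}\circ\tilde\sigma(u))$. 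Since $\partial_{0,\nu}^{-1}$ and $S_\nu$ are causal and $\tilde\sigma$ (the substitution operator of the Lipschitz map $\sigma$ with $\sigma(0)=0$) preserves predictability, $\Phi$ leaves the closed subspace of predictable processes invariant (Theorem~\ref{thm:causal,adap}, Corollary~\ref{cor:inv_evo}). By Proposition~\ref{prop:int_evo} the Lipschitz constant $L_\nu$ of $v\mapsto\mathcal I^{W,\alpha}\circ\tilde\sigma(v)$ tends to $0$ as $\nu\to\infty$, so $\Phi$ is Lipschitz with constant at most $\|S_\nu\|\,L_\nu\to0$, hence a strict contraction for $\nu$ large. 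Theorem~\ref{thm:cmp} then yields a unique predictable fixed point $u_f$, which determines $q_f$ as above; both lie in the claimed spaces, and uniqueness of the pair follows from uniqueness of the fixed point together with injectivity of $S_\nu^{-1}$.

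The nonroutine point I expect is the positive-definiteness estimate in this \emph{degenerate, non-autonomous} situation: $M_0$ vanishes on the first component, so the coercivity there has to be delivered by the bounded multiplier in $M_1$ rather than by the time derivative, while on the second component the non-autonomy forces the commutator $[\partial_{0,\nu},b(\mathrm m)]$ to enter — which is precisely why the $-b'(\mathrm m)$ has been placed into $M_1$. Checking that these two mechanisms combine into a uniform strict positivity bound, and that this bound survives the reduction to a contraction once the arbitrarily small Lipschitz constant of the stochastic integral is incorporated, is the heart of the matter; skew-selfadjointness of $A$, closedness of the operator sum, and the predictability bookkeeping are standard in this framework.
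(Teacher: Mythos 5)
Your proposal is correct and is essentially the paper's own argument: the paper deduces Theorem \ref{thm:h} as the special case $P=0$, $C=\interior{\grad}$ of Theorem \ref{thm:wh}, whose proof consists precisely of your two steps --- verification of Assumption \ref{ass:1} (skew-selfadjointness of the spatial block, Lemma \ref{lem:Asks}; the product rule absorbing the commutator of $\partial_{0,\nu}$ and $b(\mathrm{m})$ into $-b'(\mathrm{m})$, Lemma \ref{lem:Mcom}; the coercivity estimate for large $\nu$, Lemma \ref{lem:pdwh}) --- followed by the fixed-point machinery of Theorem \ref{thm:linst} resting on Theorem \ref{thm:cmp}, Proposition \ref{prop:int_evo}, Corollary \ref{cor:inv_evo} and Theorem \ref{thm:causal,adap}. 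Two minor points: Assumption \ref{ass:1} requires the coercivity in the time-truncated form $\Re\langle Q_{t}(\partial_{0,\nu}\mathcal{M}+\mathcal{N})\phi,\phi\rangle\geq k\langle\phi,Q_{t}\phi\rangle$ (this is what delivers causality of $S_\nu$, which your predictability argument then uses), whereas your computation covers only the case $t=+\infty$; integrating instead over $(-\infty,t]$ and discarding the nonnegative boundary term $\tfrac12\langle b(t)\phi_1(t),\phi_1(t)\rangle e^{-2\nu t}$ gives the truncated estimate verbatim, exactly as in Lemma \ref{lem:pdwh}. Also, $\|S_\nu\|$ stays bounded (by $1/\min\{1,\nu c'-\tfrac12\|b'\|_\infty\}\to 1$) rather than tending to $0$, since the first block only contributes the fixed constant $1$; this is immaterial, as the strict contraction comes from $\|\mathcal{I}_\nu^{W,\alpha}\|\leq C/\sqrt{2\nu}\to 0$.
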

\begin{proof} This is a special case of Theorem \ref{thm:wh} for $P=0$, $C=\interior{\grad}$.
\end{proof}
We shall formally rewrite the equation satisfied by $(u_f,q_f)$ in Theorem \ref{thm:h}. We compute using the second equation,
\[
   b(\mathrm{m})\partial_{0,\nu}q_f =  \partial_{0,\nu} b(\mathrm{m})q_f-b'(\mathrm{m})q_f = -\interior{\grad} u_f.
\]
Hence,
\[
    \partial_{0,\nu} q_f = - a(\mathrm{m})\interior{\grad} u_f.
\]
This, in turn, leads to, using the first equation,
\[
   \partial_{0,\nu}^{-1} f + \mathcal{I}^{W,\alpha}\circ \tilde{\sigma}(u_f) =  u_f +\dive q_f = u_f - \partial_{0,\nu}^{-1}\dive a(\mathrm{m})\interior{\grad} u_f,
\]
or,
\[
   \partial_{0,\nu} u_f - \dive a(\mathrm{m})\interior{\grad} u_f = f + \partial_{0,\nu}\mathcal{I}^{W,\alpha}\circ \tilde{\sigma}(u_f),
\]
which is the stochastic heat equation.

Quite similarly, one can deal with the stochastic wave equation. We shall, however, discuss a different hyperbolic type example next -- the stochastic Maxwell's equations. For this, we let $\varepsilon, \mu,\eta \colon \mathbb{R}\to L(L^2(\Omega)^3)$ be bounded, Lipschitz continuous with $\varepsilon(t)=\varepsilon(t)^*,\mu(t)=\mu(t)^*\geq c$ for all $t\in\mathbb{R}$ and some $c\geq 0$. In this case $\varepsilon,\mu,\eta$ describe the dielectricity, the magnetic permeability, and the electric conductivity, respectively.

\begin{theorem}\label{thm:max} There exists $\nu>0$ such that for all $(J,K)\in L_{\nu,\textnormal{pr}}^2(\mathbb{R};L^2(\mathbb{P};L^2(\Omega)^3\oplus L^2(\Omega)^3))$ there exists uniquely determined $(E,H)\in L_{\nu,\textnormal{pr}}^2(\mathbb{R};L^2(\mathbb{P};L^2(\Omega)^3\oplus L^2(\Omega)^3))$ such that
\[
 \left(  \overline{ \partial_{0,\nu}\begin{pmatrix} \varepsilon(\mathrm{m}) & 0 \\ 0 & \mu(\mathrm{m})
   \end{pmatrix} + \begin{pmatrix} \eta(\mathrm{m}) & 0 \\ 0 & 0 \end{pmatrix} + \begin{pmatrix} 0 & -\curl \\ \interior{\curl} & 0 \end{pmatrix} }   \right)
   \begin{pmatrix}E\\ H \end{pmatrix} = \begin{pmatrix} J \\ K \end{pmatrix} + \mathcal{I}^{W,\alpha}\circ \tilde{\sigma}((E,H)).
\]
\end{theorem}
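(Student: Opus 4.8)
The plan is to verify that the stochastic Maxwell system fits the template of the abstract non-autonomous solution theory (Theorem~\ref{thm:wp_stoch_nonlin}, or its linear variant Theorem~\ref{thm:linst}) by identifying the data appropriately. Concretely, one takes $H = L^2(\Omega)^3 \oplus L^2(\Omega)^3$, the block operator $M_0 = \begin{pmatrix} \varepsilon(\mathrm{m}) & 0 \\ 0 & \mu(\mathrm{m}) \end{pmatrix}$ as the coefficient of the time-derivative, $M_1 = \begin{pmatrix} \eta(\mathrm{m}) & 0 \\ 0 & 0 \end{pmatrix}$ as the zeroth-order term, and the spatial operator $A = \begin{pmatrix} 0 & -\curl \\ \interior{\curl} & 0 \end{pmatrix}$. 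The first step is to check that $A$ is skew-selfadjoint: this is immediate from $\interior{\curl} = \curl^*$ together with the standard fact that the off-diagonal block matrix built from an operator and (minus) its adjoint is skew-selfadjoint on the product space. Then $\overline{\partial_{0,\nu} M_0 + M_1 + A}$ is a well-defined closed operator, and its continuous invertibility for $\nu$ large enough is exactly the content of the deterministic non-autonomous solution theory recalled in Section~\ref{sec:deterministic}, provided $M_0, M_1$ satisfy the required positive-definiteness/monotonicity condition.

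The second step is to verify that positive-definiteness condition. Here one uses $\varepsilon(t) = \varepsilon(t)^* \geq c$ and $\mu(t) = \mu(t)^* \geq c$; together with Lipschitz continuity of $\varepsilon,\mu$ (so that their weak derivatives are in $L^\infty$), one obtains, for $\nu$ sufficiently large, an estimate of the form $\nu\langle M_0 u, u\rangle + \Re\langle (\tfrac12 M_0' + M_1)u, u\rangle \geq \gamma \langle u, u\rangle$ for some $\gamma > 0$ — note that $\eta$ need not be signed since the term $\nu M_0$ dominates. This is the standard well-posedness hypothesis of the evolutionary-equations framework and supplies both the continuous invertibility and the causality of $\left(\overline{\partial_{0,\nu} M_0 + M_1 + A}\right)^{-1}$ via the results quoted in the introduction (cf. Theorem~\ref{thm:causal,adap}).

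The third and final step is to incorporate the stochastic right-hand side. Since $\sigma \colon H \to L$ is Lipschitz with $\sigma(0)=0$, and the stochastic integral operator $\mathcal{I}^{W,\alpha}$ composed with the Nemytskii-type lift $\tilde\sigma$ has, by Proposition~\ref{prop:int_evo}/Corollary~\ref{cor:inv_evo}, a Lipschitz constant on $L^2_{\nu,\mathrm{pr}}$ that tends to $0$ as $\nu\to\infty$, one combines this with the uniform bound on $\left(\overline{\partial_{0,\nu} M_0 + M_1 + A}\right)^{-1}$ to make the composed map $(E,H)\mapsto \left(\overline{\partial_{0,\nu} M_0 + M_1 + A}\right)^{-1}\big((J,K) + \mathcal{I}^{W,\alpha}\circ\tilde\sigma((E,H))\big)$ a strict contraction on $L^2_{\nu,\mathrm{pr}}(\mathbb{R};L^2(\mathbb{P};H))$ for $\nu$ large. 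Causality of the deterministic resolvent guarantees the fixed-point iteration preserves predictability, so the contraction theorem (Theorem~\ref{thm:cmp}) yields a unique fixed point, which is the claimed solution $(E,H)$. Strictly speaking, the whole statement is simply the specialization of Theorem~\ref{thm:linst} to these data, so the ``proof'' is the remark that Assumption~\ref{ass:stochastics} and the well-posedness hypotheses are met; the only genuine point requiring care is the positivity estimate of the second step, in particular checking that the sign-indefiniteness of $\eta$ and the possible degeneracy hidden in $c \geq 0$ (as opposed to $c>0$) in the hypotheses is harmless — one needs $\varepsilon,\mu\geq c$ with a strictly positive $c$ here, which should be read as the effective hypothesis (or else one argues on the closure and uses that $\nu M_0 + \Re(\tfrac12 M_0'+M_1)$ together with the skew-selfadjoint $A$ still produces a continuously invertible sum).
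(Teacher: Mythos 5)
Your proposal is correct and follows essentially the same route as the paper: it is the specialization of Theorem~\ref{thm:linst}, verifying Assumption~\ref{ass:1} with $\mathcal{M}=\operatorname{diag}(\varepsilon(\mathrm{m}),\mu(\mathrm{m}))$, $\mathcal{M}'=\operatorname{diag}(\varepsilon'(\mathrm{m}),\mu'(\mathrm{m}))$, $\mathcal{N}=\operatorname{diag}(\eta(\mathrm{m}),0)$, skew-selfadjointness of the block operator $A$ (Lemma~\ref{lem:Asks}) and a positivity estimate as in Lemma~\ref{lem:pdwh}, with the contraction/predictability machinery already contained in Theorem~\ref{thm:linst}. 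Your remark that the constant must be read as $c>0$ (and that the unsigned $\eta$ is absorbed by $\nu\mathcal{M}$ for large $\nu$) is a correct reading of the intended hypotheses.
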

\begin{proof}
 The result follows upon applying Theorem \ref{thm:linst}, which in turn prerequisites the validity of Assumption \ref{ass:1} under the setting: 
 \begin{align*}
   & H= L^2(\Omega)^3\oplus L^2(\Omega)^3, &  & A=\begin{pmatrix} 0 & -\curl \\ \interior{\curl} & 0 \end{pmatrix},\\
   &  \mathcal{M}=\begin{pmatrix} \varepsilon(\mathrm{m}) & 0 \\ 0 & \mu(\mathrm{m})
   \end{pmatrix}, & & \mathcal{M}'=\begin{pmatrix} \varepsilon'(\mathrm{m}) & 0 \\ 0 & \mu'(\mathrm{m})
   \end{pmatrix}, \\
   & \mathcal{N}=  \begin{pmatrix} \eta(\mathrm{m}) & 0 \\ 0 & 0 \end{pmatrix}.
 \end{align*}
 By Lemma \ref{lem:Asks} applied to $C=-\interior{\curl}$, $A$ is m-accretive. It is also elementary to see that $\mathcal{M}$, $\mathcal{M}'$ and $\mathcal{N}$ satisfy the positive definiteness conditions, we refer to Lemma~\ref{lem:pdwh} for a similar argument. This concludes the proof.
\end{proof}

\subsection{A nonlinear example}\label{sec:nex}

We conclude this examples section with a stochastic variant of the equations of viscoplasticity with internal variables. We refer to \cite{Alber1998,Trostorff2014} for a deterministic model. 

In the stochastic setting discussed here, we shall assume that the equations governing the displacement field $u$ are stochastically perturbed and that the nonlinearity is slightly different from the equations discussed in \cite{Alber1998,Trostorff2014}. Let $N\in\mathbb{N}$.  The system to be studied reads
\begin{align}
  \partial_{0,\nu}^2 Ru - \Dive T &= f+ \partial_{0,\nu}\mathcal{I}^{W,\alpha}\circ \tilde{\sigma}(u),\label{eq:v1}\\
  T &= D(\interior{\Grad} u - Bz),\label{eq:v2} \\
  (B^*\partial_{0,\nu}^{-1}T-L\partial_{0,\nu}^{-1}z,z)& \in g,\label{eq:v3}
\end{align}
where $R\colon \mathbb{R}\to L(L^2(\Omega)^d)$, $D\colon \mathbb{R}\to L(L^2_{\textnormal{sym}}(\Omega)^{d\times d})$, $L\colon \mathbb{R}\to L(L^2(\Omega)^N)$ are bounded, Lipschitz continuous, $R(t)=R(t)^\ast$,$L(t)=L(t)^*$, $D(t)=D(t)^*$ with $R(t)\geq c$, $D(t)\geq c$ and $L(t)\geq c$ for all $t\in \mathbb{R}$ and some $c>0$. $D$ is the elasticity tensor, $f$ is a given volume force, $B\in L(L^2(\Omega)^N,L^2_{\textnormal{sym}}(\Omega)^{d\times d})$ describes the inelastic part $e_p=Bz$ of the strain tensor $e=\interior{\Grad} u$; $g\subseteq L^2(\Omega)^N\oplus L^2(\Omega)^N$ is a maximal monotone relation with $(0,0)\in g$. The unknowns of the above model are the displacement $u$, the stress tensor $T$, and the vector of internal variables $z$, where the latter assumes values in $L^2(\Omega)^N$. 

We reformulate the system \eqref{eq:v1}--\eqref{eq:v3}. For this, we introduce
\[
  \hat{T}\coloneqq \partial_{0,\nu}^{-1} T \text{ and } w\coloneqq B^*\hat{T}-L(\mathrm{m})\partial_{0,\nu}^{-1}z.
\]
Then \eqref{eq:v1} reads
\[
   \partial_{0,\nu} R(\mathrm{m}) u - \Dive \hat{T}=\partial_{0,\nu}^{-1}f+\mathcal{I}^{W,\alpha}\circ \tilde{\sigma}(u).
\]
Furthermore, \eqref{eq:v3} becomes
\[
   (w,\partial_{0,\nu}L^{-1}(\mathrm{m})(B^*\hat{T} -w)) \in g
\]and \eqref{eq:v2} yields
\[
   \partial_{0,\nu}  D^{-1}(\mathrm{m})\hat{T}- (D^{-1})'(\mathrm{m})\hat{T} =  D^{-1}(\mathrm{m}) \partial_{0,\nu}\hat{T} =\interior{\Grad} u - \partial_{0,\nu}BL^{-1}(\mathrm{m})(B^*\hat{T}-w).
\]
Altogether, we obtain
\begin{equation*}
   \left( \begin{pmatrix} u \\ w \\ \hat{T} \end{pmatrix}, \begin{pmatrix} \partial_{0,\nu}^{-1}f+\mathcal{I}^{W,\alpha}\circ \tilde{\sigma}(u) \\ 0 \\ 0 \end{pmatrix} \right) \in  \partial_{0,\nu} \mathcal{M}+\mathcal{N}+A.
\end{equation*}
with
\begin{align*}
  \mathcal{M} &= \begin{pmatrix} R(\mathrm{m}) & 0 & 0 \\ 0 & L^{-1}(\mathrm{m}) & -L^{-1}(\mathrm{m})B^\ast \\ 0 & -BL^{-1}(\mathrm{m}) & D^{-1}(\mathrm{m})+BL^{-1}(\mathrm{m})B^\ast 
   \end{pmatrix}\\ \mathcal{N}&= \begin{pmatrix} 0 & 0 & 0 \\ 0 & 0 & 0 \\ 0 & 0 & 
   -(D^{-1})'(\mathrm{m})   
   \end{pmatrix} \text{ and } \\
   A&= \begin{pmatrix}
        0& 0& -\Dive\\
        0 & g& 0 \\
        -\interior{\Grad} & 0 &0
      \end{pmatrix}.
\end{align*}
By applying a symmetric Gauss-step, it is not difficult to show that the operators $\mathcal{M},\mathcal{N}$ satisfy the assumptions stated in Theorem \ref{thm:wp_stoch_nonlin} with $H=L^2(\Omega)^d \oplus L^2(\Omega)^N\oplus L^2_{\textnormal{sym}}(\Omega)^{d\times d}$. Moreover, note that by \cite[p. 64]{Trostorff2014}, also the relation $A$ satisfies the assumptions in Theorem \ref{thm:wp_stoch_nonlin}. Thus, we have shown the following result:
\begin{theorem} Let $\mathcal{M},\mathcal{N}$ and $A$ as above. Then there exists $\nu>0$ such that for all $f\in L_{\nu,\textnormal{pr}}^2(\mathbb{R};L^2(\mathbb{P};L^2(\Omega)^d))$ there exists a unique $\left(  u ,  w, \hat{T}\right) \in L_{\nu,\textnormal{pr}}^2(\mathbb{R};L^2(\mathbb{P};L^2(\Omega)^d \oplus L^2(\Omega)^N\oplus L^2_{\textnormal{sym}}(\Omega)^{d\times d}))$ satisfying
\[
   \overline{\partial_{0,\nu}\mathcal{M}+\mathcal{N}+A}\ni \left(\left(  u ,  w, \hat{T}\right), (\partial_{0,\nu}^{-1}f+\mathcal{I}^{W,\alpha}\circ \tilde{\sigma}(u),0,0)\right).
\]
\end{theorem}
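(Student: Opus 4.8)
The plan is to verify that the operators $\mathcal{M}$, $\mathcal{N}$ and the relation $A$ displayed above meet the hypotheses of Theorem~\ref{thm:wp_stoch_nonlin} and then to invoke that theorem. Concretely, one has to check: (i) $\mathcal{M}(\mathrm{m})$ is bounded, Lipschitz, pointwise self-adjoint and uniformly strictly positive definite; (ii) $\mathcal{N}(\mathrm{m})$ is bounded and measurable and $\mathcal{M}$ admits a bounded weak derivative $\mathcal{M}'$; (iii) the combined positive-definiteness condition for $\nu\mathcal{M}+\mathcal{N}-\tfrac12\mathcal{M}'$ holds for all sufficiently large $\nu$; and (iv) $A$ is maximal monotone with $(0,0)\in A$. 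Items (i)--(iii) constitute the ``abstract material law'' input and (iv) the structural input; once all four are in place, Theorem~\ref{thm:wp_stoch_nonlin} applies to the right-hand side $(\partial_{0,\nu}^{-1}f+\mathcal{I}^{W,\alpha}\circ\tilde{\sigma}(u),0,0)$ and yields the claimed unique solvability in $L_{\nu,\textnormal{pr}}^2(\mathbb{R};L^2(\mathbb{P};H))$ with $H=L^2(\Omega)^d\oplus L^2(\Omega)^N\oplus L^2_{\textnormal{sym}}(\Omega)^{d\times d}$.

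For (i), boundedness, Lipschitz continuity and pointwise symmetry of $\mathcal{M}$ are inherited from the corresponding properties of $R$, $L$, $D$ and from $B$ being a fixed bounded operator; the inverses $R^{-1}$, $L^{-1}$, $D^{-1}$ exist, are bounded and are Lipschitz since $R,L,D\geq c>0$ uniformly (and $t\mapsto X(t)^{-1}$ is Lipschitz whenever $t\mapsto X(t)$ is Lipschitz and bounded below by a positive constant). For the uniform positive definiteness I would carry out the announced symmetric Gauss step: eliminating the off-diagonal $L^{-1}(t)B^\ast$-entries against the middle block $L^{-1}(t)$ produces a congruence $\mathcal{M}(t)=S(t)^\ast\operatorname{diag}\!\big(R(t),L^{-1}(t),D^{-1}(t)\big)S(t)$, where $S(t)$ is block-unipotent with the only nontrivial entry being (a constant multiple of) $B^\ast$, so that the Schur complement of $L^{-1}(t)$ in the lower-right $2\times 2$ block is exactly $D^{-1}(t)$. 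Since $R(t),L^{-1}(t),D^{-1}(t)\geq c'>0$ uniformly in $t$ and $\|S(t)^{-1}\|$ is bounded uniformly in $t$ (it depends only on $\|B\|$), this gives $\Re\langle\mathcal{M}(t)x,x\rangle\geq c''|x|^2$ for all $x$ and all $t$ with a uniform $c''>0$.

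For (ii) and (iii): $\mathcal{N}(\mathrm{m})=\operatorname{diag}(0,0,-(D^{-1})'(\mathrm{m}))$ with $(D^{-1})'=-D^{-1}D'D^{-1}\in L^\infty$ since $D$ is Lipschitz and bounded below, hence $\mathcal{N}$ is bounded and measurable and, likewise, $\mathcal{M}'$ is bounded; the combined estimate $\Re\langle(\nu\mathcal{M}(t)+\mathcal{N}(t)-\tfrac12\mathcal{M}'(t))x,x\rangle\geq \kappa|x|^2$ for a uniform $\kappa>0$ then follows for all $\nu$ large, because $\mathcal{M}(t)\geq c''$ uniformly while $\mathcal{N}$ and $\mathcal{M}'$ are uniformly bounded. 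For (iv), $A$ is the sum of the block off-diagonal operator $\begin{pmatrix}0&0&-\Dive\\0&0&0\\-\interior{\Grad}&0&0\end{pmatrix}$, which is skew-self-adjoint because $\Dive=-\interior{\Grad}^\ast$ (so m-accretive, cf.\ Lemma~\ref{lem:Asks}), and the block-diagonal relation $\operatorname{diag}(0,g,0)$, which is maximal monotone with $(0,0)\in g$; the maximal monotonicity of the whole $A$ with $(0,0)\in A$ is precisely the statement invoked from \cite[p.~64]{Trostorff2014}. With (i)--(iv) verified, an application of Theorem~\ref{thm:wp_stoch_nonlin} concludes the proof, the stochastic integral term being handled entirely within that theorem.

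The main obstacle is (i), and within it the uniformity in $t$ of the positive-definiteness estimate for $\mathcal{M}$: one must make sure the congruence/Gauss transformation $S(t)$ and its inverse are bounded independently of $t$ (clear here, since the only off-diagonal coupling is the $t$-independent operator $B$) and that the lower bounds on $R(t)$, $L^{-1}(t)$, $D^{-1}(t)$ can be chosen uniformly. Everything else --- the boundedness and Lipschitz bookkeeping for the coefficient operators, the choice of $\nu$ large in the combined sign condition, and the maximal monotonicity of $A$ --- is routine or already recorded in the cited references.
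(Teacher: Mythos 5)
Your proposal follows essentially the same route as the paper: verify that $\mathcal{M}$ is uniformly strictly positive definite via the symmetric Gauss step (the congruence with the block-unipotent, $t$-independent transformation built from $B^\ast$, reducing $\mathcal{M}(t)$ to $\operatorname{diag}(R(t),L^{-1}(t),D^{-1}(t))$), note that $\mathcal{N}$ and $\mathcal{M}'$ are bounded, take the maximal monotonicity of $A$ with $(0,0)\in A$ from \cite[p.~64]{Trostorff2014}, and then invoke Theorem~\ref{thm:wp_stoch_nonlin}; the paper does exactly this, only more tersely. Your additional bookkeeping (Lipschitz continuity of the inverses, uniformity of the congruence, the projection of $\sigma$ onto the first component) is correct filling-in of details the paper leaves to the reader.
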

We shall now develop the theory in order to properly justify the above results.
\section{The time derivative and evolutionary mappings}

\label{sec:tdem}

Let $H$ be a Banach space. For $\nu\in\mathbb{R}$
we define 
\[
L_{\nu}^{2}(\mathbb{R};H)\coloneqq\{f\in L_{\textnormal{loc}}^{2}(\mathbb{R};H);\int_{\mathbb{R}}|f(t)|_{H}^{2}\exp(-2\nu t)\, \d t<\infty\}
\]
endowed with the obvious norm. It is easy to see that $L_{\nu}^{2}(\mathbb{R};H)$
is a Banach space, as well. Specializing to $H$ being a Hilbert space, we denote by $H_{\nu}^{1}(\mathbb{R};H)$ the
Sobolev space of once weakly differentiable functions with derivative
in $L_{\nu}^{2}(\mathbb{R};H)$. We obtain (see \cite[Section 2]{KPSTW14_OD}),
that 

\begin{align*}
\partial_{0,\nu}\colon H_{\nu}^{1}(\mathbb{R};H)\subseteq L_{\nu}^{2}(\mathbb{R};H) & \to L_{\nu}^{2}(\mathbb{R};H)\\
f & \mapsto f'
\end{align*}

is a densely defined, closed and normal linear operator. Moreover, we have
$\partial_{0,\nu}^{*}=-\partial_{0,\nu}+2\nu$. In applications to
be discussed later on, $\partial_{0,\nu}$ will be our realization
of the time derivative for $\nu>0$ `large enough'. Note that for
$\nu>0$, we obtain that $\partial_{0,\nu}$ is continuously invertible
with 
\[
\partial_{0,\nu}^{-1}f(t)=\int_{-\infty}^{t}f(\tau)\,\d\tau,
\]
where the integral is well-defined for all $f\in L_{\nu}^{2}(\mathbb{R};H)$
in the Bochner sense and we have $\|\partial_{0,\nu}^{-1}\|=1/\nu$,
see also \cite[Corollary 2.5]{KPSTW14_OD}.

For the treatment of evolutionary equations with non-autonomous coefficients,
we will need the notion of evolutionary mappings. In fact, also in
the discussion of stochastic partial differential equations, this
notion proved useful for the abstract description of the stochastic
integral.

\begin{definition}\label{def:evolutionary} Let $H,G$ be Banach spaces,
$\nu>0$. Let 
\[
F:\dom(F)\subseteq\bigcap_{\mu\geq\nu}L_{\mu}^{2}(\R;H)\to\bigcap_{\mu\geq\nu}L_{\mu}^{2}(\R;G),
\]
where $\dom(F)$ is supposed to be a vector space. We call $F$ \emph{evolutionary
(at $\nu$)}, if for all $\mu\geq\nu$, $F$ satisfies the following
properties 

\begin{enumerate}[label=(\roman{enumi}),ref=(\roman{enumi})] 
\item $F$ is Lipschitz continuous as a mapping 
\[
F_{0,\mu}\colon\dom(F)\subseteq L_{\mu}^{2}(\R;H)\to L_{\mu}^{2}(\R;G),\,\phi\mapsto F(\phi),
\]

\item $\|F\|_{\textrm{ev},\textrm{Lip}}\coloneqq\limsup_{\mu\to\infty}\|F_{\mu}\|_{\textrm{Lip}}<\infty$,
with $F_{\mu}\coloneqq\overline{F_{0,\mu}}$ denoting the Lipschitz continuous extension of $F$.\end{enumerate} 

The non-negative number $\|F\|_{\textrm{ev},\textrm{Lip}}$ is called
the \emph{the eventual Lipschitz constant of $F$}. We denote 
\[
L_{\textnormal{ev},\nu}(H,G)\coloneqq\{F;F\text{ evolutionary at }\nu\},\quad L_{\textnormal{ev},\nu}(H)\coloneqq L_{\textnormal{ev},\nu}(H,H).
\]
If, in addition, $F_{\mu}$ leaves $\dom(F_{\mu})=\overline{\dom(F)}^{L_{\mu}^2}$
invariant ($\mu\geq\nu$), then we call $F$ \emph{invariant evolutionary
(at $\nu$)}. A mapping $F$, which is evolutionary at $\nu$, is
called \emph{densely defined}, if, for all $\mu\geq\nu$, $\dom(F)\subseteq L_{\mu}^2(\mathbb{R};H)$
is dense. \end{definition}

Next, we introduce the concept of causality, as it has been introduced
in \cite{W15_CB} as a particular concept for (nonlinear) mappings
in Banach spaces. In the applications to follow, we will mainly focus on
Lipschitz continuous Hilbert space valued mappings, see also
\cite[Definition 2.2.2 and Remark 2.2.3]{W16_H}.

\begin{definition} (a) Let $H$ be a Banach space. A family $R=(R_{t})_{t\in \mathbb{R}}$
is called \emph{resolution of the identity (in $H$)}, if for all
$t\in\mathbb{R}$ 
\[
R_{t}=R_{t}^{2} \in L(H)\text{ and }2\lim_{t\to\pm\infty}R_{t}=1\pm1,
\]
where the limit is in the strong operator topology of $L(H)$. The
pair $(H,R)$ is called resolution space.

(b) Let $(H,R),(K,Q)$ be resolution spaces and let $F\colon\dom(F)\subseteq H\to K$,
$D\subseteq K'$. We call $F$ \emph{causal on $D$}, if for all $r>0$,
$t\in\mathbb{R}$, $\phi\in D$, the mapping 
\[
(B_{F}(0,r),|R_{t}(\cdot-\cdot)|)\to(K,|\langle Q_{t}(\cdot-\cdot),\phi\rangle|),x\mapsto F(x)
\]
is Lipschitz continuous, where $B_{F}(0,r)\coloneqq\{x\in\dom(F);|x|^{2}+|F(x)|^{2}<r\}$;
if $D=K'$, we call $F$ \emph{causal}. \end{definition}

\begin{prop}[{{\cite[Theorem 1.7]{W15_CB} and \cite[Theorem 2.2.4]{W16_H}}}] \label{prop:causal} Let $(H,R)$, $(K,Q)$ be resolution
spaces, $F\colon\dom(F)\subseteq H\to K$ densely defined and Lipschitz
continuous, $D\subseteq K'$ separating for $K$. Then the following
conditions are equivalent: 
\begin{enumerate}
\item[(i)] $F$ is causal on $D$, 
\item[(ii)] $\overline{F}$ is causal, 
\item[(iii)] for all $t\in\mathbb{R}$, 
\[
(\dom(F),|R_{t}(\cdot-\cdot)|\to(K,|Q_{t}(\cdot-\cdot)|),x\mapsto F(x)
\]
is Lipschitz continuous. 
\item[(iv)] for all $t\in\mathbb{R}$, we have $Q_{t}\circ\overline{F}=Q_{t}\circ\overline{F}\circ R_{t}$. 
\end{enumerate}
\end{prop}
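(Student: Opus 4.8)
My plan is to prove the four assertions equivalent by running the cycle (i)$\Rightarrow$(iv)$\Rightarrow$(iii)$\Rightarrow$(ii)$\Rightarrow$(i), carrying out most of the work on the canonical Lipschitz extension $\overline{F}\colon H\to K$ of $F$ (which exists and is unique since $F$ is densely defined, Lipschitz, and $K$ is complete). The conceptual heart is the equivalence (iii)$\Leftrightarrow$(iv). For (iii)$\Rightarrow$(iv) I would first note that the estimate $|Q_t(F(x)-F(y))|\le L_t|R_t(x-y)|$ furnished by (iii) extends from $\dom(F)$ to all of $H$ by density, since $\overline{F}$, $Q_t\in L(K)$ and $R_t\in L(H)$ are all continuous; then, plugging in $y=R_tx$ and using idempotency $R_t=R_t^2$, the right-hand side vanishes and one gets $Q_t\overline{F}(x)=Q_t\overline{F}(R_tx)$, i.e.\ (iv). For the converse, (iv) lets me rewrite $Q_t(\overline{F}(x)-\overline{F}(y))=Q_t(\overline{F}(R_tx)-\overline{F}(R_ty))$ and estimate this by $\|Q_t\|\,\mathrm{Lip}(F)\,|R_t(x-y)|$, which is (iii) on $\dom(F)$.

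The two implications (iii)$\Rightarrow$(ii) and (ii)$\Rightarrow$(i) should be routine weakenings. From the $H$-version of (iii) I would pair with an arbitrary $\psi\in K'$ to obtain $|\langle Q_t(\overline{F}(x)-\overline{F}(y)),\psi\rangle|\le|\psi|L_t|R_t(x-y)|$ on all of $H$, hence on every graph ball $B_{\overline{F}}(0,r)$; that is exactly causality of $\overline{F}$ on $K'$, i.e.\ (ii). For (ii)$\Rightarrow$(i), since $\overline{F}|_{\dom(F)}=F$ and $B_F(0,r)\subseteq B_{\overline{F}}(0,r)$, the Lipschitz property defining causality of $\overline{F}$ on $K'$ restricts both to the subset $D\subseteq K'$ and to $B_F(0,r)$, yielding causality of $F$ on $D$.

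The hard part will be (i)$\Rightarrow$(iv), where a priori only \emph{weak} information --- Lipschitz estimates tested against the separating set $D$ and only on graph balls --- must be upgraded to the strong identity (iv). The plan is: for $u\in\dom(F)$ pick $v_n\in\dom(F)$ with $v_n\to R_tu$ in $H$; then, crucially, $R_tv_n\to R_t^2u=R_tu$ by idempotency, so $|R_t(v_n-u)|=|R_tv_n-R_tu|\to 0$. Since the $v_n$ are bounded and $F$ is globally Lipschitz, $u$ and all $v_n$ lie in a common graph ball, so causality of $F$ on $D$ gives $|\langle Q_t(F(v_n)-F(u)),\phi\rangle|\le L|R_t(v_n-u)|\to 0$ for every $\phi\in D$; that is, $Q_tF(v_n)\to Q_tF(u)$ in $\sigma(K,D)$. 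At the same time, continuity of $\overline{F}$ gives $Q_tF(v_n)=Q_t\overline{F}(v_n)\to Q_t\overline{F}(R_tu)$ in norm, hence in $\sigma(K,D)$; because $D$ separates $K$, $\sigma(K,D)$-limits are unique, so $Q_t\overline{F}(R_tu)=Q_tF(u)=Q_t\overline{F}(u)$. Finally, $u\mapsto Q_t\overline{F}(R_tu)$ and $u\mapsto Q_t\overline{F}(u)$ are continuous on $H$ and agree on the dense set $\dom(F)$, hence everywhere, which is (iv). The real obstacle is to recognize that idempotency of $R_t$ makes $|R_t(v_n-u)|$ (rather than $|v_n-u|$) the quantity controlling the causality estimate --- so the estimate actually bites --- and that the separation of $D$ is exactly what turns the resulting $\sigma(K,D)$-convergence into the desired identity.
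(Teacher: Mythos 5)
Your proposal is correct and follows essentially the same route as the paper: the easy chain (iv)$\Rightarrow$(iii)$\Rightarrow$(ii)$\Rightarrow$(i) plus the substantive implication (i)$\Rightarrow$(iv), which you establish exactly as the paper does --- approximate $R_t u$ by $v_n\in\dom(F)$, exploit $R_t^2=R_t$ so that $|R_t(v_n-u)|\to0$ makes the causality estimate on a common graph ball bite, use that $D$ separates $K$ to identify the limits, and finish by density/continuity. The only differences are cosmetic (you phrase the last identification via uniqueness of $\sigma(K,D)$-limits and spell out the easy implications the paper leaves to the reader).
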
 \begin{proof} The implication (ii)$\Rightarrow$(i) is
trivial, both the implications (iii)$\Rightarrow$(ii) and (iv)$\Rightarrow$(iii)
are easy to obtain. Thus, it suffices to prove that (i) is sufficient
for (iv). For this, let $t\in\mathbb{R}$ and $\phi\in D$. For $\psi\in\dom(F)$
we find $\psi_{n}\in\dom(F)$ such that $\psi_{n}\to R_{t}\psi$ in
$H$ as $n\to\infty$. By the boundedness of $(\psi_{n})_{n}$ in
$H$, and by causality of $F$ on $D$, we find $C\geq0$ such that
for all $n\in\mathbb{N}$, we obtain 
\[
|\langle Q_{t}F(\psi)-Q_{t}F(\psi_{n}),\phi\rangle|\leq C|R_{t}\psi-R_{t}\psi_{n}|.
\]
Letting $n\to\infty$ in the latter inequality and using that $R_{t}^{2}=R_{t}$,
we deduce that 
\[
|\langle Q_{t}F(\psi)-Q_{t}\overline{F}(R_{t}\psi),\phi\rangle|\leq C|R_{t}\psi-R_{t}^{2}\psi|=0.
\]
Thus, since $D$ is separating for $K$, we infer 
\[
Q_{t}F(\psi)=Q_{t}\overline{F}(R_{t}\psi)\quad(\psi\in\dom(F)).
\]
By continuity, we obtain $Q_{t}\circ\overline{F}=Q_{t}\circ\overline{F}\circ R_{t}$.
\end{proof}

We recall a variant of \cite[Lemma 2.13]{SW16_SD}. As the assumptions
vary slightly from the ones used in \cite[Lemma 2.13]{SW16_SD}, we
carry out the proof.

\begin{lem}\label{lem:indep_of_evo} Let $F$ be evolutionary at
$\nu>0$. Assume that $\dom(F)\cap\dom(F\chi_{(-\infty,a]})$ is dense
in $\dom(F)$ with respect to the $L_{\mu}^{2}(\mathbb{R};H)$-norm
for all $a\in\R$ and $\mu\geq\nu$. Then $F_{\eta}|_{\dom(F_{\eta})\cap\dom(F_{\mu})}=F_{\mu}|_{\dom(F_{\eta})\cap\dom(F_{\mu})}$
for all $\eta\geq\mu\geq\nu$. \end{lem}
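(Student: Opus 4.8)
The plan is to show that for $\eta \ge \mu \ge \nu$ and any $\phi$ in the common domain $\dom(F_\eta) \cap \dom(F_\mu)$, the functions $F_\eta(\phi)$ and $F_\mu(\phi)$ coincide as elements of $L^2_{\textnormal{loc}}(\mathbb{R};G)$, i.e.\ almost everywhere. The natural device is to exploit the cut-off assumption and a locality-in-time property: for $a \in \mathbb{R}$ the truncation $\chi_{(-\infty,a]}$ should act like a "causality cut-off," so that $F(\phi)$ restricted to $(-\infty,a]$ only depends on $\phi$ restricted to $(-\infty,a]$. First I would fix $a \in \mathbb{R}$, pick $\phi \in \dom(F) \cap \dom(F\chi_{(-\infty,a]})$, and write $\phi = \chi_{(-\infty,a]}\phi + \chi_{(a,\infty)}\phi$; the key observation is that on the time interval $(-\infty,a]$ one can compare $F_\eta(\phi)$ with $F_\eta(\chi_{(-\infty,a]}\phi)$ and likewise for $\mu$.

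The crucial sub-step is to establish that $\chi_{(-\infty,a]} F_\mu(\phi) = \chi_{(-\infty,a]} F_\mu(\chi_{(-\infty,a]}\phi)$ for each admissible $\mu$. I would extract this from the evolutionary property: since $F_{0,\mu}$ is Lipschitz on $L^2_\mu$, and since $\chi_{(-\infty,a]}\phi - \phi = -\chi_{(a,\infty)}\phi$ is supported in $(a,\infty)$, one estimates $\|\chi_{(-\infty,a]}(F_\mu(\phi) - F_\mu(\chi_{(-\infty,a]}\phi))\|_{L^2_\mu}$ by testing against the difference of the arguments after multiplying by further cut-offs $\chi_{(-\infty,b]}$ with $b < a$, letting $b \to a$, and using dominated convergence together with the fact that $\|\chi_{(b,a]}\phi\|_{L^2_\mu} \to 0$ as $b\uparrow a$. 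Effectively this is the content (or an easy variant) of \cite[Lemma 2.13]{SW16_SD}, but re-derived under the density hypothesis $\dom(F)\cap\dom(F\chi_{(-\infty,a]})$ dense in $\dom(F)$, which lets one first prove the identity on that dense set and then pass to the closure $F_\mu = \overline{F_{0,\mu}}$ by Lipschitz continuity of $F_\mu$ and continuity of multiplication by $\chi_{(-\infty,a]}$ on $L^2_\mu$.

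Once the cut-off identity $\chi_{(-\infty,a]}F_\mu = \chi_{(-\infty,a]}F_\mu\chi_{(-\infty,a]}$ holds for all $\mu \ge \nu$, the conclusion is routine: for $\phi \in \dom(F_\eta)\cap\dom(F_\mu)$ (which after the density step reduces to $\phi \in \dom(F)$, then extended by continuity) and $a \in \mathbb{R}$, the truncated function $\chi_{(-\infty,a]}\phi$ lies in both $L^2_\eta$ and $L^2_\mu$ and in fact has bounded support to the left, so $F(\chi_{(-\infty,a]}\phi)$ is an element of $\bigcap_{\mu\ge\nu}L^2_\mu$ given by a single formula independent of the weight (this is where evolutionarity, i.e.\ the uniform-in-$\mu$ consistency built into the definition via the eventual Lipschitz constant, is used). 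Hence $\chi_{(-\infty,a]}F_\eta(\phi) = \chi_{(-\infty,a]}F_\eta(\chi_{(-\infty,a]}\phi) = \chi_{(-\infty,a]}F_\mu(\chi_{(-\infty,a]}\phi) = \chi_{(-\infty,a]}F_\mu(\phi)$. Since $a \in \mathbb{R}$ is arbitrary, $F_\eta(\phi) = F_\mu(\phi)$ almost everywhere, which is the claim.

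The main obstacle I anticipate is the middle step: rigorously justifying that evaluating $F$ on a left-truncated argument yields a weight-independent result and that the two Lipschitz extensions $F_\eta$ and $F_\mu$ agree on such truncations. One has to be careful that $\dom(F_\eta)$ and $\dom(F_\mu)$ are closures of $\dom(F)$ in \emph{different} norms, so "$\phi \in \dom(F_\eta)\cap\dom(F_\mu)$" need not mean $\phi \in \dom(F)$; the density hypothesis on $\dom(F)\cap\dom(F\chi_{(-\infty,a]})$ is exactly what is needed to approximate and transfer the identity across both closures simultaneously. A clean way to organize this is to first prove everything for $\phi \in \dom(F)\cap\dom(F\chi_{(-\infty,a]})$, then use density in $\dom(F)$ and joint continuity of $F_\eta, F_\mu$ and of the cut-off operators to reach general $\phi$ in the intersection of the closures; handling the two topologies in tandem, rather than the locality estimate itself, is the delicate bookkeeping.
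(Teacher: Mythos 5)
There is a genuine gap, and it sits exactly at your ``crucial sub-step''. You propose to derive the locality identity $\chi_{(-\infty,a]}F_\mu(\phi)=\chi_{(-\infty,a]}F_\mu(\chi_{(-\infty,a]}\phi)$ from Lipschitz continuity of $F_{0,\mu}$ in the \emph{single} space $L^2_\mu(\mathbb{R};H)$ together with a limit $b\uparrow a$ over further cut-offs. That mechanism cannot work: Lipschitz continuity with respect to one fixed exponential weight carries no causality information whatsoever (a generic Lipschitz map on $L^2_\mu$ is not causal), the quantity your estimate produces, $\|\chi_{(a,\infty)}\phi\|_{L^2_\mu}$, is not small, and multiplying the output by $\chi_{(-\infty,b]}$ does not interact with the Lipschitz bound unless causality is already known. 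The identity you want is indeed true under the hypotheses, but it is the content of Lemma \ref{lem:criteriona_for_causal}, whose proof uses the full evolutionary structure in an essential way: one estimates with the Lipschitz constant at weight $\eta$ and then lets the weight $\eta$ tend to infinity, so that the factor $\|\psi-\chi_{(-\infty,t]}\psi\|_{L^2_\eta}\,e^{\eta t}$ vanishes in the limit --- a completely different device from the one you sketch. Moreover, the paper's proof of the present lemma does not use causality at all, so the detour is avoidable.

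The second problem is that the step carrying the actual content --- that $F_\eta(\chi_{(-\infty,a]}\phi)=F_\mu(\chi_{(-\infty,a]}\phi)$ --- is only gestured at, and the reason you give is incorrect: $\chi_{(-\infty,a]}\phi$ has support bounded \emph{above} (to the right), not to the left, and no support constraint by itself makes $F$ of an element outside $\dom(F)$ ``weight-independent''; weight-independence on such elements is precisely what the lemma asserts and has to be proved. The paper's argument is short: choose $\phi_n\in\dom(F)\cap\dom(F\chi_{(-\infty,a]})$ with $\phi_n\to\phi$ in $L^2_\mu$; then $\chi_{(-\infty,a]}\phi_n\in\dom(F)$ and $\chi_{(-\infty,a]}\phi_n\to\chi_{(-\infty,a]}\phi$ in both $L^2_\mu$ and $L^2_\eta$, and since on $\dom(F)$ both extensions are given by $F$ itself, $F_\mu(\chi_{(-\infty,a]}\phi)=F_\eta(\chi_{(-\infty,a]}\phi)$; finally $\chi_{(-\infty,a]}\phi\to\phi$ as $a\to\infty$ in both norms (because $\phi$ lies in both spaces), so Lipschitz continuity of $F_\mu$ and $F_\eta$ finishes the proof --- no causality is needed. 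Your closing remarks do point at the right repair (prove the identity on the dense set and pass to both closures simultaneously), but that \emph{is} the whole proof rather than deferred bookkeeping; as written, the proposal establishes neither the causality sub-step it relies on nor the agreement of the two extensions on the truncations.
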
 \begin{proof} Let $\phi\in\dom(F_{\eta})\cap\dom(F_{\mu})$.
By assumption, we can choose a sequence $(\phi_{n})_{n}$ in $\dom(F)\cap\dom(F\chi_{(-\infty,a]})$
such that $\phi_{n}\to\phi$ in $L_{\mu}^{2}(\mathbb{R};H)$ as $n\to\infty$.
Moreover, we deduce that $\dom(F)\ni\chi_{(-\infty,a]}\phi_{n}\to\chi_{(-\infty,a]}\phi$
in both $L_{\eta}^{2}(\mathbb{R};H)$ and $L_{\mu}^{2}(\mathbb{R};H)$
as $n\to\infty$. In particular, we obtain $\chi_{(-\infty,a]}\phi\in\dom(F_{\mu})\cap\dom(F_{\eta})$
and 
\begin{align*}
F_{\mu}(\chi_{(-\infty,a]}\phi) & =\lim_{n\to\infty}F_{\mu}(\chi_{(-\infty,a]}\phi_{n})\\
 & =\lim_{n\to\infty}F(\chi_{(-\infty,a]}\phi_{n})\\
 & =\lim_{n\to\infty}F_{\eta}(\chi_{(-\infty,a]}\phi_{n})=F_{\eta}(\chi_{(-\infty,a]}\phi).
\end{align*}
Next, we note that $\chi_{(-\infty,a]}\phi\to\phi$ as $a\to\infty$
in $L_{\eta}^{2}(\mathbb{R};H)$ and $L_{\mu}^{2}(\mathbb{R};H)$
since $\phi\in L_{\eta}^{2}(\mathbb{R};H)\cap L_{\mu}^{2}(\mathbb{R};H)$.
Hence, $F_{\mu}(\phi)=F_{\eta}(\phi)$. \end{proof}

We shall further point out another consequence of evolutionarity and
the condition on the domain in the previous result. In fact, this
is a combination of the arguments used for \cite[Theorem 4.5]{KPSTW14_OD}
and \cite[Remark 2.1.5]{W16_H}. For this, from now on and throughout
the whole manuscript, we shall use $R_{t}=Q_{t}=\chi_{(-\infty,t]}$
as the standard resolution of the identity, and thus $(L_{\nu}^{2}(\mathbb{R};H),(\chi_{(-\infty,t]})_{t})$
as resolution space.

\begin{lem}\label{lem:criteriona_for_causal} Let $F$ be evolutionary
at $\nu>0$. Assume that $\dom(F)\cap\dom(F\chi_{(-\infty,a]})$ is
dense in $\dom(F)$ with respect to the $L_{\mu}^{2}(\mathbb{R};H)$-norm
for all $a\in\R$ and $\mu\geq\nu$. Then $F_{\mu}$ is causal for
all $\mu\geq\nu$. \end{lem}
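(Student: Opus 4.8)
The plan is to reduce the causality of $F_\mu$ to the characterization in Proposition~\ref{prop:causal}, specifically to condition (iii) with the standard resolution of the identity $R_t=Q_t=\chi_{(-\infty,t]}$. Since $F$ is evolutionary at $\nu$, the mapping $F_\mu$ is by definition the Lipschitz continuous extension of $F_{0,\mu}$ to $\overline{\dom(F)}^{L^2_\mu}$, so it suffices to verify that
\[
  \bigl(\dom(F),|\chi_{(-\infty,t]}(\cdot-\cdot)|_{L^2_\mu}\bigr)\to\bigl(L^2_\mu(\mathbb{R};G),|\chi_{(-\infty,t]}(\cdot-\cdot)|_{L^2_\mu}\bigr),\ \phi\mapsto F(\phi)
\]
is Lipschitz continuous for every $t\in\mathbb{R}$; Proposition~\ref{prop:causal} (applied with $D=G'$, which is separating) then upgrades this to causality of $\overline{F_{0,\mu}}=F_\mu$.

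First I would fix $t\in\mathbb{R}$ and $\phi,\psi\in\dom(F)$. Using the density hypothesis, I would approximate $\phi,\psi$ in $L^2_\mu$ by sequences in $\dom(F)\cap\dom(F\chi_{(-\infty,t]})$; by the argument in Lemma~\ref{lem:indep_of_evo}, the truncations $\chi_{(-\infty,t]}\phi$ and $\chi_{(-\infty,t]}\psi$ then lie in $\dom(F_\mu)$ and are limits in $L^2_\mu$ of $\chi_{(-\infty,t]}$ applied to the approximating sequences, which sit in $\dom(F)$. The key computation is to estimate $|\chi_{(-\infty,t]}(F(\phi)-F(\psi))|_{L^2_\mu}$. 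The idea, following \cite[Theorem 4.5]{KPSTW14_OD}, is to exploit the scaling behaviour of the weighted norm: for functions supported in $(-\infty,t]$ one has, for $\eta\geq\mu$,
\[
  |\chi_{(-\infty,t]}f|_{L^2_\eta}^2=\int_{-\infty}^t|f(s)|^2 e^{-2\eta s}\,\d s\geq e^{-2(\eta-\mu)t}\,|\chi_{(-\infty,t]}f|_{L^2_\mu}^2,
\]
so that $|\chi_{(-\infty,t]}f|_{L^2_\mu}\leq e^{(\eta-\mu)t}|f|_{L^2_\eta}$ when $f$ is already supported in $(-\infty,t]$. Combining this with $|g|_{L^2_\eta}\leq e^{-(\eta-\mu)a}|g|_{L^2_\mu}$ for $g$ supported in $(-\infty,a]$ (valid for $a\leq t$, after a preliminary reduction), and with the Lipschitz bound $\|F_\eta\|_{\mathrm{Lip}}$, one gets for $\phi,\psi$ with $\chi_{(-\infty,t]}\phi=\phi$, $\chi_{(-\infty,t]}\psi=\psi$ an estimate of the form $|\chi_{(-\infty,t]}(F_\mu(\phi)-F_\mu(\psi))|_{L^2_\mu}\leq e^{(\eta-\mu)(t-a)}\|F_\eta\|_{\mathrm{Lip}}|\phi-\psi|_{L^2_\mu}$; letting $\eta\to\infty$ and using $\|F\|_{\mathrm{ev},\mathrm{Lip}}<\infty$ forces the "future part" $\chi_{(t,\infty)}(F_\mu(\phi)-F_\mu(\psi))$ to vanish when $\phi-\psi$ is supported in $(-\infty,t]$, which is exactly condition (iv) of Proposition~\ref{prop:causal} in the form $\chi_{(-\infty,t]}\circ F_\mu=\chi_{(-\infty,t]}\circ F_\mu\circ\chi_{(-\infty,t]}$. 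One then reads off (iii) and concludes Lipschitz continuity in the $\chi_{(-\infty,t]}$-seminorm.

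The main obstacle I anticipate is organizing the limiting argument cleanly: the estimate above only directly controls $F$ on inputs that are already truncated, so one must first pass from general $\phi,\psi$ to their truncations $\chi_{(-\infty,t]}\phi$, $\chi_{(-\infty,t]}\psi$, and this is where the density hypothesis $\dom(F)\cap\dom(F\chi_{(-\infty,a]})$ dense in $\dom(F)$ is essential — it guarantees both that the truncations land in $\dom(F_\mu)$ and that $F_\mu$ commutes with the approximation in the relevant topologies, as already exploited in Lemma~\ref{lem:indep_of_evo}. A secondary technical point is that the inequality $|g|_{L^2_\eta}\leq e^{-(\eta-\mu)a}|g|_{L^2_\mu}$ requires $g$ to be supported to the left of $a$; to make the sharpening-as-$\eta\to\infty$ mechanism work one should split $\mathbb{R}$ at $t$ rather than at a free parameter $a$, so the cleanest route is to prove directly that, for $\phi=\chi_{(-\infty,t]}\phi$ and $\psi=\chi_{(-\infty,t]}\psi$, the difference $F_\eta(\phi)-F_\eta(\psi)$ has $L^2_\eta$-norm $\leq\|F_\eta\|_{\mathrm{Lip}}|\phi-\psi|_{L^2_\eta}$ with $|\phi-\psi|_{L^2_\eta}\leq e^{-(\eta-\mu)t}|\phi-\psi|_{L^2_\mu}$, whence $|\chi_{(t,\infty)}(F_\eta(\phi)-F_\eta(\psi))|_{L^2_\mu}\leq e^{(\eta-\mu)t}|F_\eta(\phi)-F_\eta(\psi)|_{L^2_\eta}\cdot(\text{nothing})$ — more precisely one estimates the tail piece using that it is supported in $(t,\infty)$, so $|\chi_{(t,\infty)}h|_{L^2_\mu}\leq e^{(\eta-\mu)t}|h|_{L^2_\eta}$, giving the bound $e^{(\eta-\mu)t}\|F_\eta\|_{\mathrm{Lip}}e^{-(\eta-\mu)t}|\phi-\psi|_{L^2_\mu}=\|F_\eta\|_{\mathrm{Lip}}|\phi-\psi|_{L^2_\mu}$, which does \emph{not} decay; therefore the decay must instead come from inserting an intermediate truncation at some $a<t$ and letting first $\eta\to\infty$ then $a\uparrow t$, so some care with the order of limits is needed. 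Once that bookkeeping is done the result follows.
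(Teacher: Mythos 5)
There is a genuine gap, and it is conceptual rather than mere bookkeeping: your key estimate aims at the wrong half of the causality condition. Condition (iv) of Proposition \ref{prop:causal}, $Q_t\circ F_\mu=Q_t\circ F_\mu\circ Q_t$, says that if two inputs agree on the past, i.e.\ if $\phi-\psi$ is supported in $(t,\infty)$, then the \emph{past} part $\chi_{(-\infty,t]}\left(F_\mu(\phi)-F_\mu(\psi)\right)$ vanishes. What you try to prove in your final paragraph --- that the \emph{future} part $\chi_{(t,\infty)}\left(F_\mu(\phi)-F_\mu(\psi)\right)$ vanishes or decays when $\phi-\psi$ is supported in $(-\infty,t]$ --- is not equivalent to (iv) and is in fact false for typical causal maps: for $F=\partial_{0,\nu}^{-1}$ an input supported in the past produces an output with a nontrivial constant tail on $(t,\infty)$. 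This is why your exponentials cancel and the bound ``does not decay'': the quantity you are bounding simply need not be small, and no intermediate truncation at $a<t$ or reordering of limits can prove a false statement. (Several of your weighted-norm inequalities also have the support condition reversed: $|g|_{L^2_\eta}\leq e^{-(\eta-\mu)a}|g|_{L^2_\mu}$ holds for $g$ supported in $[a,\infty)$, not in $(-\infty,a]$, and likewise the bound you state for $|\chi_{(t,\infty)}h|_{L^2_\mu}$ goes the wrong way.)

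The correct mechanism --- and the one the paper uses --- estimates the right quantity: take $\psi\in\dom(F)\cap\dom(F\chi_{(-\infty,t]})$ (this is where the density hypothesis enters, guaranteeing $Q_t\psi\in\dom(F)$ and permitting a closure argument at the end) and control $Q_t\big(F(\psi)-F(Q_t\psi)\big)$, whose \emph{input} difference $\psi-Q_t\psi$ is supported in $(t,\infty)$. Pairing against a fixed unweighted test function supported in $(-\infty,t]$ (or, equivalently, cutting the output at some $a<t$ in norm) and using the Lipschitz bound at level $\eta$ produces a factor $\|F_\eta\|_{\textnormal{Lip}}\,e^{\eta t}\|\psi-Q_t\psi\|_{L_\eta^2}$, and
\[
  e^{2\eta t}\|\psi-Q_t\psi\|_{L_\eta^2}^2=\intop_0^\infty|\psi(s+t)|^2e^{-2\eta s}\,\d s\longrightarrow 0\quad(\eta\to\infty)
\]
by dominated convergence, because the difference is supported \emph{strictly} to the right of $t$; combined with $\limsup_{\eta\to\infty}\|F_\eta\|_{\textnormal{Lip}}<\infty$ this gives $Q_tF(\psi)=Q_tF(Q_t\psi)$, and density plus Proposition \ref{prop:causal} finish the proof. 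Your reduction to Proposition \ref{prop:causal} and your use of the density hypothesis are in the right spirit, and your iterated-limit idea ($\eta\to\infty$, then $a\uparrow t$) would indeed work --- but only when applied to this quantity; as written, the proposal does not prove the lemma.
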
 \begin{proof} Let $\mu\geq\nu$. We
apply Proposition \ref{prop:causal} and prove $Q_{t}\circ F=Q_{t}\circ F\circ Q_{t}$
for all $t\in\mathbb{R}$. Note that this implies (iv) in Proposition
\ref{prop:causal} as both the left- and the right-hand side are densely
defined in $\dom(F_{\mu})$. So, let $t\in\mathbb{R}$, $\phi\in\mathring{C}_{\infty}(\mathbb{R};G')$
and $\psi\in\dom(F)\cap\dom(F\chi_{(-\infty,t]})$. We compute for
$\eta\geq\mu$ 
\begin{align*}
& |\langle Q_{t}(F(\psi)-F(Q_{t}\psi)),\phi\rangle_{{L^{2}(\R;G)},L^2(\mathbb{R};G')}| \\ & =|\langle(F(\psi)-F(Q_{t}\psi)),Q_{t}\phi\rangle_{{L^{2}(\R;G)},L^2(\mathbb{R};G')}|\\
 & \leq\|{F_{\eta}}\|_{\textnormal{Lip}}\|\psi-Q_{t}\psi\|_{L_{\eta}^{2}}\|Q_{t}{\phi}\|_{L^{2}}\exp(\eta t).
\end{align*}
We compute further 
\begin{align*}
\|\psi-Q_{t}\psi\|_{L_{\eta}^{2}}^{2}\exp(2\eta t) & =\intop_{\R}|\psi(s)(1-\chi_{(-\infty,t]}(s))|\exp(-2\eta(s-t))\,\d s\\
 & =\intop_{\R}|\psi(s+t)(1-\chi_{(-\infty,t]}(s+t))|\exp(-2\eta s)\,\d s\\
 & =\intop_{0}^{\infty}|\psi(s+t)|\exp(-2\eta s)\,\d s\to0\quad(\eta\to0),
\end{align*}
and thus, we deduce that $Q_{t}(F(\psi)-F(Q_{t}\psi))=0$, as desired.
\end{proof}

We conclude this section with a perturbation result, which we need
for a solution theory for non-autonomous stochastic partial differential
equations.

\begin{theorem}[{{{see also \cite[Corollary 2.15]{SW16_SD}}}}]\label{thm:cmp}
Let $H$ be a Banach space, $\nu>0$, $S,F\in L_{\textnormal{ev},\nu}(H)$,
$F$ invariant evolutionary. Let $S$ be densely defined, 
$\|S\|_{\textnormal{ev},\textnormal{Lip}}\|F\|_{\textnormal{ev},\textnormal{Lip}}<1$,
and $S_{\mu}[\dom(F_{\mu})]\subseteq\dom(F_{\mu})$ for all $\mu\geq\nu$.
Then for all $f\in\dom(F_{\mu})$ the mapping 
\begin{align*}
\Phi_{\mu}(f)\colon\dom(F_{\mu}) & \to\dom(F_{\mu})\\
u & \mapsto S_{\mu}f+S_{\mu}(F_{\mu}(u))
\end{align*}
admits a unique fixed point $u_{f}$ as long as $\mu\geq\nu$ is large
enough, that is, a unique solution $u_{f}$ of the problem 
\begin{equation}
u_{f}-S_{\mu}(F_{\mu}(u_{f}))=S_{\mu}(f).\label{eq:pert_prob}
\end{equation}
The mapping $f\mapsto u_{f}$ is evolutionary. If $S$ and $F$ are
causal, then so is $f\mapsto u_{f}$. If $\dom(F)\cap\dom(F\chi_{(-\infty,a]})$
is dense in $\dom(F)$ with respect to $L_{\mu}^{2}(\mathbb{R};H)$
for all sufficiently large $\mu$, then $f\mapsto u_{f}$ does not
depend on $\mu$ in the sense of Lemma \ref{lem:indep_of_evo}. \end{theorem}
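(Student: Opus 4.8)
The plan is to obtain the fixed point by the Banach contraction principle applied on the complete metric space $\dom(F_\mu)=\overline{\dom(F)}^{L_\mu^2}$, which is a closed subset of $L_\mu^2(\mathbb{R};H)$ and hence complete, and then to transfer the properties (evolutionarity, causality, $\mu$-independence) from $S$ and $F$ to the solution map. First I would fix $\mu\geq\nu$ large enough that $\|S_\mu\|_{\textnormal{Lip}}\|F_\mu\|_{\textnormal{Lip}}<1$; this is possible since, by Definition~\ref{def:evolutionary}(ii), $\|S_\mu\|_{\textnormal{Lip}}\to\|S\|_{\textnormal{ev},\textnormal{Lip}}$ and $\|F_\mu\|_{\textnormal{Lip}}\to\|F\|_{\textnormal{ev},\textnormal{Lip}}$ as $\mu\to\infty$ and the product of the eventual Lipschitz constants is $<1$ by hypothesis. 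For such $\mu$ and $f\in\dom(F_\mu)$, the map $\Phi_\mu(f)$ indeed sends $\dom(F_\mu)$ into itself: $F_\mu$ leaves $\dom(F_\mu)$ invariant since $F$ is invariant evolutionary, and $S_\mu[\dom(F_\mu)]\subseteq\dom(F_\mu)$ by assumption, while $S_\mu f\in\dom(F_\mu)$ again by the latter. Moreover $\Phi_\mu(f)$ is a strict contraction with constant $\|S_\mu\|_{\textnormal{Lip}}\|F_\mu\|_{\textnormal{Lip}}<1$, so Banach's theorem yields a unique $u_f\in\dom(F_\mu)$ with $u_f=S_\mu f+S_\mu(F_\mu(u_f))$, i.e.\ \eqref{eq:pert_prob}.

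Next I would show $f\mapsto u_f$ is Lipschitz on $\dom(F_\mu)$ with the usual a priori estimate: subtracting the fixed point identities for $u_f$ and $u_g$ and using the contraction bound gives $\|u_f-u_g\|_{L_\mu^2}\leq \frac{\|S_\mu\|_{\textnormal{Lip}}}{1-\|S_\mu\|_{\textnormal{Lip}}\|F_\mu\|_{\textnormal{Lip}}}\|f-g\|_{L_\mu^2}$. Since this bound is uniform for all sufficiently large $\mu$ (both Lipschitz constants converge), the eventual Lipschitz constant of $f\mapsto u_f$ is finite, and after checking that the solution operator is well-defined into $\bigcap_{\mu\geq\nu}L_\mu^2$ — which follows once the $\mu$-independence is in place, see below — we conclude $f\mapsto u_f$ is evolutionary.

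For causality, assume $S$ and $F$ are causal; by Proposition~\ref{prop:causal}(iv) this means $Q_t S_\mu=Q_t S_\mu Q_t$ and $Q_t F_\mu=Q_t F_\mu Q_t$ for all $t$. I would run the Picard iteration $u^{(0)}=0$, $u^{(n+1)}=\Phi_\mu(f)(u^{(n)})$, which converges to $u_f$ in $L_\mu^2$. Applying $Q_t$ and using causality of $S_\mu$ and $F_\mu$ inductively shows $Q_t u^{(n)}=Q_t\Phi_\mu(Q_t f)(Q_t u^{(n-1)})$, so in the limit $Q_t u_f$ depends only on $Q_t f$; passing to the limit and invoking Proposition~\ref{prop:causal} again gives causality of $f\mapsto u_f$. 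Finally, for the $\mu$-independence: assuming $\dom(F)\cap\dom(F\chi_{(-\infty,a]})$ is dense in $\dom(F)$ for all large $\mu$ and all $a$, Lemmas~\ref{lem:indep_of_evo} and~\ref{lem:criteriona_for_causal} apply to $F$; one then checks that the hypothesis also propagates to the solution operator (using that $S$ is already evolutionary and the causality just established, so cutting off $f$ and $u_f$ by $\chi_{(-\infty,a]}$ is compatible), and concludes that $u_f$ computed at level $\eta$ agrees with $u_f$ computed at level $\mu$ on the intersection of domains, exactly in the sense of Lemma~\ref{lem:indep_of_evo}.

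The main obstacle I anticipate is the $\mu$-independence step: one must verify that the cut-off/density condition imposed on $F$ actually transfers to the map $f\mapsto u_f$ (so that Lemma~\ref{lem:indep_of_evo} can be invoked for the solution operator itself), and reconcile the fixed point taken in $\dom(F_\mu)$ with the one taken in $\dom(F_\eta)$; the clean way is to use causality to localize in time and the already-established density hypothesis on $\dom(F)$, but keeping track of which space each fixed point lives in and why they coincide on overlaps is the delicate bookkeeping. The contraction and Lipschitz estimates themselves are routine.
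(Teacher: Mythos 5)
Your proposal is correct and takes essentially the same route as the paper's proof: Banach's contraction principle on $\dom(F_\mu)$ for $\mu$ large enough that $\|S_\mu\|_{\textnormal{Lip}}\|F_\mu\|_{\textnormal{Lip}}<1$, the standard a priori Lipschitz estimate giving evolutionarity of $f\mapsto u_f$, causality inherited from causality of $S_\mu$ and $F_\mu$ (the paper states this via causality of $\Phi_\mu(f)$ under composition, which your Picard-iteration argument simply makes explicit), and $\mu$-independence via Lemma \ref{lem:indep_of_evo}. The bookkeeping you flag for the last step is glossed over in the paper as well, so there is no gap relative to the paper's own argument.
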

\begin{proof} Let $\mu\geq\nu$ such that $\|S_{\mu}\|_{\textnormal{Lip}}\|F_{\mu}\|_{\textnormal{Lip}}<1$.
Then it is easy to see that $\Phi_{\mu}(f)$ defines a strict contraction.
By standard a posteriori estimates, we deduce that we find $C\geq0$
such that $\|f\mapsto u_{f}\|_{\textnormal{Lip}}\leq1/(1-\|S\|_{\textnormal{ev},\textnormal{Lip}}\|F\|_{\textnormal{ev},\textnormal{Lip}})+C$.

It remains to prove causality of the fixed point mapping. For this
it suffices to observe that $\Phi_{\mu}(f)$ is causal. This, however,
follows from the fact that composition of causal mappings is still
causal.

The independence of $\mu$ is a consequence of Lemma \ref{lem:indep_of_evo}.
\end{proof}

In applications, the mapping $S_{\mu}$ will be the solution operator
of an abstract deterministic partial differential equation and thus,
the solution $u_{f}$ in \eqref{eq:pert_prob} turns out to be the
solution of this deterministic PDE perturbed by an additional mapping
$F_{\mu}$, which will be our stochastic integral operator.

\section{The deterministic solution theory}

\label{sec:deterministic}

In this section we will review the solution theory for a class of
(non-autonomous) linear partial differential equations which has its
roots in the autonomous version presented in \cite{PicPhy}. Later
on, this has been generalized to non-autonomous or non-linear equations,
see e.g.~\cite{Trostorff2012,PTWW13_NA,Trostorff2014,W15_NA}. To
keep this article conveniently self-contained, we shall summarize
the well-posedness theorem outlined in \cite[Theorem 3.4.6]{W16_H}.
However, we will also present the main results of \cite{Trostorff2014},
in order to obtain a non-linear variant for stochastic partial differential
equations.

The main hypothesis for the linear case is presented next.

\begin{ass}[{{\cite[Hypothesis 3.4.4]{W16_H}}}]\label{ass:1}
Let $H$ be a Hilbert space, $\nu>0$, $\mathcal{M},\mathcal{M}',\mathcal{N}\in L_{\textnormal{ev},\nu}(H)$.
Assume that $\mathring{C}_{\infty}(\mathbb{R};H)\subseteq\dom(\mathcal{M})\cap\dom(\mathcal{M}')\cap\dom(\mathcal{N})$.
Let $A\colon\dom(A)\subseteq H\to H$ be densely defined and m-accretive.
Assume that 
\begin{align*}
 & \forall\mu\geq\nu\colon\mathcal{M}\partial_{0,\mu}\subseteq\partial_{0,\mu}\mathcal{M}_{\mu}-\mathcal{M}'_{\mu},\\
 & \exists c>0\,\forall\mu\geq\nu,t\in\R\colon\Re\langle Q_{t}\left(\partial_{0,\mu}\mathcal{M}+\mathcal{N}\right)\phi,\phi\rangle_{0,\mu}\geq c\langle\phi,Q_t\phi\rangle_{0,\mu}\\ &\hspace*{8cm}\quad(\phi\in\mathring{C}_{\infty}(\mathbb{R};H)).
\end{align*}
\end{ass}

With the latter set of assumptions, we can show the following well-posedness
theorem covering a large class of linear non-autonomous evolutionary
equations:

\begin{theorem}[{{\cite[Theorem 3.4.6]{W16_H}}}]\label{thm:st}
Impose Assumption \ref{ass:1}. Then the operator 
\[
\mathcal{B}\coloneqq\partial_{0,\nu}\mathcal{M}+\mathcal{N}+A
\]
is densely defined and closable. Moreover, its closure is onto and
continuously invertible in $L_{\mu}^{2}(\mathbb{R};H)$ for all $\mu\geq\nu$.
Furthermore, $\mathcal{S}\coloneqq\mathcal{B}^{-1}$ is evolutionary
at $\nu$, densely defined and causal. \end{theorem}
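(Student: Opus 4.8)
\textbf{Proof proposal for Theorem~\ref{thm:st}.}
The plan is to reduce the claim to a fixed-point/contraction argument in each space $L_\mu^2(\mathbb{R};H)$, using the positive definiteness hypothesis to get an a~priori estimate and then Theorem~\ref{thm:cmp} (or a direct Neumann-series argument) to establish invertibility, keeping careful track of the $\mu$-dependence so that the conclusions hold uniformly for $\mu\geq\nu$. First I would fix $\mu\geq\nu$ and work on $\mathring{C}_\infty(\mathbb{R};H)$, which by assumption lies in the domains of $\mathcal{M},\mathcal{M}',\mathcal{N}$ and is a core for $\partial_{0,\mu}$ and dense in $L_\mu^2(\mathbb{R};H)$; since $A$ is densely defined one checks that $\mathcal{B}=\partial_{0,\nu}\mathcal{M}+\mathcal{N}+A$ is densely defined, and closability will follow once the a~priori estimate below is in place (an operator with a bounded-below closure-compatible estimate on a core is closable).

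The core estimate is the second line of Assumption~\ref{ass:1}: for $\phi\in\mathring{C}_\infty(\mathbb{R};H)$ and every $t\in\mathbb{R}$,
\[
\Re\langle Q_t(\partial_{0,\mu}\mathcal{M}+\mathcal{N})\phi,\phi\rangle_{0,\mu}\geq c\langle\phi,Q_t\phi\rangle_{0,\mu},
\]
and since $A$ is accretive, $\Re\langle Q_t A\phi,\phi\rangle_{0,\mu}\geq 0$ (this uses that $Q_t=\chi_{(-\infty,t]}$ commutes appropriately with $A$ acting pointwise in time, or rather that the time-cutoff does not spoil accretivity of an operator constant in time; with $t\to\infty$ one at least gets $\Re\langle\mathcal{B}\phi,\phi\rangle_{0,\mu}\geq c\|\phi\|_{0,\mu}^2$, which already yields injectivity and the lower bound $\|\mathcal{B}\phi\|_{0,\mu}\geq c\|\phi\|_{0,\mu}$). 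This gives continuous invertibility of $\overline{\mathcal{B}}$ on its range and, crucially, a bound $\|\overline{\mathcal{B}}^{-1}\|\leq 1/c$ \emph{independent of $\mu$}. To get that the range is all of $L_\mu^2(\mathbb{R};H)$, I would apply the same estimate to the adjoint-type problem: using $\partial_{0,\mu}^*=-\partial_{0,\mu}+2\mu$ and the adjoints $\mathcal{M}_\mu^*,\mathcal{N}_\mu^*,A^*$ (note $A$ m-accretive implies $A^*$ accretive), one derives an analogous lower bound for the formal adjoint operator for all $\mu$ sufficiently large, whence $\overline{\mathcal{B}}$ has dense range; combined with the closed range that follows from the lower bound, $\overline{\mathcal{B}}$ is onto.

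Next I would address evolutionarity of $\mathcal{S}=\overline{\mathcal{B}}^{-1}$: the bound $\|\mathcal{S}_\mu\|_{\mathrm{Lip}}=\|\overline{\mathcal{B}}^{-1}\|_{L(L_\mu^2)}\leq 1/c$ for all $\mu\geq\nu$ gives both the Lipschitz continuity in each $L_\mu^2$ and the finiteness of $\|\mathcal{S}\|_{\mathrm{ev,Lip}}$ as the $\limsup_{\mu\to\infty}$; one must also check the compatibility of the realizations for different $\mu$ on the common domain, which is where Lemma~\ref{lem:indep_of_evo} enters (the cutoff-density hypothesis is met because $\mathring{C}_\infty$ is invariant under $\chi_{(-\infty,a]}$-truncation up to mollification, or one argues directly). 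Finally, causality of $\mathcal{S}$: by Proposition~\ref{prop:causal} it suffices to verify $Q_t\circ\mathcal{S}=Q_t\circ\mathcal{S}\circ Q_t$, i.e.\ $Q_t\overline{\mathcal{B}}=Q_t\overline{\mathcal{B}}Q_t$ on the domain, and this is precisely what the $Q_t$-in-the-estimate formulation is designed to yield — testing $\mathcal{B}u=f$ with $Q_t u$ and using the inequality with $Q_t f=0$ forces $Q_t u=0$, which is the causality identity. The main obstacle I anticipate is the onto/surjectivity step: getting the lower bound for the adjoint equation requires that the hypothesis on $\partial_{0,\mu}\mathcal{M}+\mathcal{N}$ transfers to $-\partial_{0,\mu}\mathcal{M}_\mu^*+2\mu\mathcal{M}_\mu^*+\mathcal{N}_\mu^*$ with the commutator relation $\mathcal{M}\partial_{0,\mu}\subseteq\partial_{0,\mu}\mathcal{M}_\mu-\mathcal{M}_\mu'$ absorbed correctly, and ensuring the constant there is uniform in $\mu$ for $\mu$ large; this is routine in the autonomous case but needs care here. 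Since the statement is quoted verbatim from \cite[Theorem 3.4.6]{W16_H}, I would in the write-up simply cite that reference and sketch the three bullet points (estimate, adjoint estimate for onto, causality via $Q_t$) rather than reproduce the full argument.
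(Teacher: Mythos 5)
Your overall architecture matches the paper's: the accretivity estimate $\Re\langle\mathcal{B}\phi,\phi\rangle_{L_\mu^2}\geq c\langle\phi,\phi\rangle_{L_\mu^2}$ (from Assumption~\ref{ass:1} plus accretivity of $A$, letting $t\to\infty$), an adjoint estimate to get surjectivity, the uniform bound $1/c$ for evolutionarity, and causality read off from the $Q_t$-weighted inequality via Proposition~\ref{prop:causal}(iii) — the last point is essentially verbatim the paper's argument. Two remarks on the easier parts: closability follows from accretivity (densely defined accretive operators are closable; the paper cites Beyer), not from the mere norm lower bound $\|\mathcal{B}\phi\|\geq c\|\phi\|$, which by itself does not imply closability; and Theorem~\ref{thm:cmp}/Neumann series plays no role here.

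The genuine gap is in the surjectivity step, exactly where you flag ``this needs care'' but offer no mechanism. Dense range of $\overline{\mathcal{B}}$ is equivalent to injectivity of the \emph{true} adjoint $\mathcal{B}^*$, and an accretivity estimate for the \emph{formal} adjoint $-\partial_{0,\mu}\mathcal{M}_\mu^*+2\mu\mathcal{M}_\mu^*+\mathcal{N}_\mu^*+A^*$ on whatever domain you can write it down on proves nothing unless you also know that this domain is a core for $\mathcal{B}^*$, i.e.\ that $\mathcal{B}^*=\overline{(\partial_{0,\mu}\mathcal{M}_\mu+\mathcal{N}_\mu)^*+A^*}$; in general the adjoint of a sum strictly contains the closure of the sum of adjoints, and an element of $\kar(\mathcal{B}^*)$ need not be approximable in graph norm by elements where your estimate applies. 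This identification is the technical heart of the paper's proof: it is obtained from the commutator Lemma~\ref{lem:comadjo}, applied with $C=\partial_{0,\mu}\mathcal{M}_\mu+\mathcal{N}_\mu$, $D=A$ and the smoothing operators $T_n=(1+\tfrac1n\partial_{0,\mu})^{-1}$, using the relation $\mathcal{M}\partial_{0,\mu}\subseteq\partial_{0,\mu}\mathcal{M}_\mu-\mathcal{M}'_\mu$ to control $[T_n,C]$. The paper then avoids your worry about re-deriving the estimate ``for $\mu$ large'' altogether: by Proposition~\ref{prop:core}, $\dom(\partial_{0,\mu})$ is a core for $C^*$, and on it one simply writes $\Re\langle C^*\phi,\phi\rangle=\Re\langle\phi,C\phi\rangle\geq c\langle\phi,\phi\rangle$ (since such $\phi$ also lie in $\dom(C)$), extends by the core property, and adds accretivity of $A^*$ (Phillips). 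With both estimates in hand, Lemma~\ref{lem:pos_def} delivers invertibility of $\overline{\mathcal{B}}$ with norm at most $1/c$, surjectivity included. Without the $(C+D)^*=\overline{C^*+D^*}$ step (or an equivalent substitute), your proof of ontoness does not close.
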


\begin{lem}\label{lem:pos_def} Let $G$ be a Hilbert space, $B\colon\dom(B)\subseteq G\to G$
a densely defined linear operator. Assume there exists $c>0$ with
the property that 
\begin{equation}
\Re\langle B\phi,\phi\rangle\geq c\langle\phi,\phi\rangle,\label{eq:coerc1}
\end{equation}
as well as 
\begin{equation}
\Re\langle B^{*}\psi,\psi\rangle\geq c\langle\psi,\psi\rangle,\label{eq:coerc2}
\end{equation}
for all $\phi\in\dom(B)$ and $\psi\in\dom(B^{*})$. Then $B$ is
closable and $\overline{B}^{-1}$ exists as an element of $L(G)$,
the space of bounded linear operators on $G$ and $\|\overline{B}^{-1}\|\leq1/c$.
\end{lem}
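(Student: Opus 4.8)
\textbf{Proof plan for Lemma~\ref{lem:pos_def}.}

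The plan is to obtain the two conclusions — closability and bounded invertibility of the closure with $\|\overline{B}^{-1}\|\le 1/c$ — from the coercivity estimates \eqref{eq:coerc1} and \eqref{eq:coerc2} via elementary Hilbert space arguments. First I would note that \eqref{eq:coerc1} together with the Cauchy--Schwarz inequality yields $\|B\phi\|\,\|\phi\|\ge\Re\langle B\phi,\phi\rangle\ge c\|\phi\|^2$, hence $\|B\phi\|\ge c\|\phi\|$ for all $\phi\in\dom(B)$; symmetrically \eqref{eq:coerc2} gives $\|B^{*}\psi\|\ge c\|\psi\|$ for all $\psi\in\dom(B^{*})$. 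In particular $B$ is injective with $B^{-1}$ Lipschitz (with constant $1/c$) on $\ran(B)$, and the same holds for $B^{*}$.

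Next I would establish closability. Since $B$ is densely defined, $B^{*}$ exists and is a closed operator; the estimate $\|B^{*}\psi\|\ge c\|\psi\|$ shows in particular that $\dom(B^{*})$ is large enough to make $B^{*}$ densely defined, because its kernel is trivial — more precisely, $\kar(B^{*})=\{0\}$ means $\overline{\ran(B)}=G$. Having $B^{*}$ densely defined is exactly the criterion for $B$ to be closable, with $\overline{B}=B^{**}$. Moreover the a priori estimate passes to the closure: for $\phi\in\dom(\overline{B})$ pick $\phi_n\in\dom(B)$ with $\phi_n\to\phi$ and $B\phi_n\to\overline{B}\phi$, and pass to the limit in $\|B\phi_n\|\ge c\|\phi_n\|$ to get $\|\overline{B}\phi\|\ge c\|\phi\|$. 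The analogous argument applied to $B^{*}$ (which is already closed, so $\overline{B^{*}}=B^{*}=(\overline{B})^{*}$) gives $\|(\overline{B})^{*}\psi\|\ge c\|\psi\|$ for all $\psi\in\dom((\overline{B})^{*})$.

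Then I would show $\ran(\overline{B})=G$. From $\|\overline{B}\phi\|\ge c\|\phi\|$ and closedness of $\overline{B}$ one deduces that $\ran(\overline{B})$ is closed: if $\overline{B}\phi_n\to g$, then $(\phi_n)_n$ is Cauchy by the estimate, so $\phi_n\to\phi$ and, by closedness, $\phi\in\dom(\overline{B})$ with $\overline{B}\phi=g$. On the other hand $\ran(\overline{B})^{\perp}=\kar((\overline{B})^{*})=\{0\}$ by the dual estimate. A closed subspace with trivial orthogonal complement is all of $G$, so $\overline{B}$ is onto. Being also injective, $\overline{B}^{-1}$ is everywhere defined, and the estimate $\|\overline{B}\phi\|\ge c\|\phi\|$ rewrites as $\|\overline{B}^{-1}g\|\le (1/c)\|g\|$ for all $g\in G$, i.e. $\overline{B}^{-1}\in L(G)$ with $\|\overline{B}^{-1}\|\le 1/c$.

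I do not expect any serious obstacle here; this is the standard Lax--Milgram-type argument in operator form. The only point requiring a little care is the bookkeeping around adjoints: one must use that $(\overline{B})^{*}=B^{*}$ (valid since $B^{*}$ is automatically closed), so that the hypothesis \eqref{eq:coerc2}, stated for $B^{*}$, already applies verbatim to $(\overline{B})^{*}$ without needing a separate closure step on the adjoint side.
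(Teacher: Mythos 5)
The closability step is where your argument breaks down. You justify density of $\dom(B^{*})$ ``because its kernel is trivial'', i.e.\ because $\kar(B^{*})=\{0\}$, equivalently $\overline{\ran(B)}=G$. But injectivity of $B^{*}$ (equivalently, density of $\ran(B)$) says nothing about the size of $\dom(B^{*})$: an adjoint can be injective while being defined only on a small, non-dense subspace. Worse, for a densely defined $B$ the density of $\dom(B^{*})$ is \emph{equivalent} to closability of $B$ (von Neumann: $B$ closable iff $\dom(B^{*})$ dense, and then $\overline{B}=B^{**}$), so at this point you are assuming exactly what the lemma asks you to prove. The hypothesis \eqref{eq:coerc2} only controls $B^{*}$ on whatever domain it happens to have; it gives no a priori information that this domain is dense. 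Since the closability assertion is precisely the delicate part of the lemma, this is a genuine gap, not a bookkeeping slip.

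The paper fills this gap with a regularization argument that your proposal is missing: for $\lambda>0$, \eqref{eq:coerc1} makes $1+\lambda B$ injective with $(1+\lambda B)^{-1}$ a contraction on its domain, while \eqref{eq:coerc2} gives $\kar(1+\lambda B^{*})=\{0\}$ and hence $\ran(1+\lambda B)$ dense --- this is the legitimate use of the ``trivial kernel of the adjoint'' fact, namely to get dense \emph{range}, not dense domain --- so that $\overline{(1+\lambda B)^{-1}}\in L(G)$ with norm at most $1$ and $\overline{(1+\lambda B)^{-1}}\to 1$ strongly as $\lambda\to 0$; feeding a sequence $\phi_{n}\to 0$ with $B\phi_{n}\to y$ into this family yields $y=0$, i.e.\ closability. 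Once closability (and therefore $(\overline{B})^{*}=B^{*}$) is secured, the remainder of your argument --- the lower bounds $\|\overline{B}\phi\|\geq c\|\phi\|$ and $\|B^{*}\psi\|\geq c\|\psi\|$, closedness of $\ran(\overline{B})$, $\ran(\overline{B})^{\perp}=\kar(B^{*})=\{0\}$, hence surjectivity and $\|\overline{B}^{-1}\|\leq 1/c$ --- is correct and essentially coincides with how the paper concludes. To repair the proposal, either import the $(1+\lambda B)^{-1}$ argument or supply some other genuine proof that $\dom(B^{*})$ is dense.
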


\begin{proof} Before we come to the proof of the assertion, we need
some preparations. Note that for $\lambda>0$ the operator $1+\lambda B$
is one-to-one by \eqref{eq:coerc1}. Moreover, by \eqref{eq:coerc2}
we infer that its adjoint $1+\lambda B^{\ast}$ is one-to-one, as
well, and hence, $(1+\lambda B)^{-1}$ is densely defined. Again,
\eqref{eq:coerc1} implies that $(1+\lambda B)^{-1}$ is bounded with
norm less than or equal to $1$. Thus, its closure is an element in
$L(G)$ with the same norm.\\
 For $\phi\in\dom(B)$ we obtain 
\begin{equation}
\overline{(1+\lambda B)^{-1}}\phi-\phi=-\overline{(1+\lambda B)^{-1}}\lambda B\phi\to0\quad(\lambda\to0)\label{eq:approx}
\end{equation}
and since $\dom(B)$ is dense and the family $\left(\overline{(1+\lambda B)^{-1}}\right)_{\lambda>0}$
is bounded, we infer that \eqref{eq:approx} holds for each $\phi\in G$.\\
 We now prove the closability of $B$. For doing so, let $(\phi_{n})_{n\in\N}$
in $\dom(B)$ with $\phi_{n}\to0$ and $B\phi_{n}\to y$ for some
$y\in G$ as $n\to\infty$. Thus, we infer for each $\lambda>0$ 
\begin{align*}
\overline{(1+\lambda B)^{-1}}y & =\lim_{n\to\infty}\overline{(1+\lambda B)^{-1}}B\phi_{n}\\
 & =\frac{1}{\lambda}\lim_{n\to\infty}\left(\phi_{n}-\overline{(1+\lambda B)^{-1}}\phi_{n}\right)\\
 & =0,
\end{align*}
and thus, letting $\lambda$ tend to $0$, \eqref{eq:approx} yields
$y=0$, proving that $B$ is closable. Noting that \eqref{eq:coerc1}
and \eqref{eq:coerc2} yield that $B^{-1}$ is a densely defined bounded
linear operator with norm less than or equal to $\frac{1}{c}$, the
assertion follows with $\overline{B^{-1}}=\overline{B}^{-1}$. \end{proof}

The crucial part of the proof of Theorem \ref{thm:st} is to show
that $\mathcal{B}$ has dense range. For this, we will employ the
following lemma.

\begin{lem}\label{lem:comadjo} Let $H$ be a Hilbert space, $D\colon\dom(D)\subseteq H\to H$,
$C\colon\dom(C)\subseteq H\to H$ closed. Assume that $\dom(D)\cap\dom(C)\subseteq H$
is dense. Furthermore, let $(T_{n})_{n}$ in $L(H)$ be such that
$T_{n}\to1$ in the strong operator topology. Moreover, assume that
$T_{n}[\dom(D)]\subseteq\dom(D)$ and $\ran(T_{n})\subseteq\dom(C)$
for each $n\in\N$, as well as 
\begin{align*}
 & [T_{n},C],[T_{n},D]\text{ bounded for all \ensuremath{n\in\N}, }\\
 & \overline{[T_{n},C]},\overline{[T_{n},D]}\to0\ (n\to\infty),
\end{align*}
where the convergence holds in the strong operator topology. Then $(C+D)^{*}=\overline{C^{*}+D^{*}}$.
\end{lem}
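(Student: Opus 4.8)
The inclusion $\overline{C^{*}+D^{*}}\subseteq (C+D)^{*}$ is the routine direction: it holds whenever $\dom(C)\cap\dom(D)$ is dense, since $C^{*}+D^{*}$ is already an extension of $(C+D)^{*}$ restricted appropriately—more precisely, $C^{*}+D^{*}\subseteq (C+D)^{*}$ by direct verification of the adjoint identity on $\dom(C)\cap\dom(D)$, and $(C+D)^{*}$ is closed, so the closure of $C^{*}+D^{*}$ is contained in it. So the whole content of the lemma is the reverse inclusion $(C+D)^{*}\subseteq\overline{C^{*}+D^{*}}$, and this is where the operators $T_{n}$ do their work.

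For the reverse inclusion I would take $v\in\dom((C+D)^{*})$ and set $w\coloneqq (C+D)^{*}v$, so that $\langle (C+D)\phi, v\rangle = \langle \phi, w\rangle$ for all $\phi\in\dom(C)\cap\dom(D)$. The goal is to produce a sequence $v_{n}\to v$ with $v_{n}\in\dom(C^{*})\cap\dom(D^{*})$ and $(C^{*}+D^{*})v_{n}\to w$. The natural candidate is $v_{n}\coloneqq T_{n}^{*}v$. To see $v_{n}\in\dom(C^{*}+D^{*})$, test against $\phi\in\dom(C)$: using $\ran(T_{n})\subseteq\dom(C)$ and boundedness of $[T_{n},C]$, one computes $\langle C\phi, T_{n}^{*}v\rangle=\langle T_{n}C\phi,v\rangle=\langle CT_{n}\phi,v\rangle-\langle\overline{[T_{n},C]}\phi,v\rangle$; since $T_{n}\phi\in\dom(C)\subseteq$ whatever is needed and the commutator term is bounded in $\phi$, this exhibits $T_{n}^{*}v\in\dom(C^{*})$ with $C^{*}T_{n}^{*}v=T_{n}^{*}C^{*}\cdots$—actually the cleaner bookkeeping is $C^{*}T_{n}^{*}v = T_{n}^{*}C^{*}v + (\overline{[T_{n},C]})^{*}v$ once one knows $T_{n}^{*}v\in\dom(C^{*})$. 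The analogous statement holds for $D$. Adding, $v_{n}\in\dom(C^{*})\cap\dom(D^{*})$ and $(C^{*}+D^{*})v_{n} = T_{n}^{*}(C+D)^{*}v + \big(\overline{[T_{n},C]}+\overline{[T_{n},D]}\big)^{*}v = T_{n}^{*}w + \big(\overline{[T_{n},C]}+\overline{[T_{n},D]}\big)^{*}v$.

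Then I would pass to the limit: $T_{n}\to 1$ strongly implies $T_{n}^{*}\to 1$ strongly on the reflexive (here Hilbert) space $H$ provided the $T_{n}$ are uniformly bounded—which follows from strong convergence via uniform boundedness—so $v_{n}=T_{n}^{*}v\to v$ and $T_{n}^{*}w\to w$; and since $\overline{[T_{n},C]}\to 0$ and $\overline{[T_{n},D]}\to 0$ strongly, their adjoints converge to $0$ strongly as well (again by uniform boundedness, which needs a word: the commutators are uniformly bounded by the uniform-boundedness principle applied to the strongly convergent sequence), hence $\big(\overline{[T_{n},C]}+\overline{[T_{n},D]}\big)^{*}v\to 0$. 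Therefore $(C^{*}+D^{*})v_{n}\to w$, which shows $v\in\dom(\overline{C^{*}+D^{*}})$ and $\overline{C^{*}+D^{*}}\,v=w=(C+D)^{*}v$, completing the proof.

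The main obstacle is the careful justification of the commutator manipulations at the level of unbounded operators: one must check that $T_{n}^{*}$ maps $\dom(C^{*})$ into $\dom(C^{*})$ (equivalently that the pairing $\phi\mapsto\langle C\phi,T_{n}^{*}v\rangle$ is bounded for all $\phi\in\dom(C)$, not merely on the common core), and that the resulting identity $C^{*}T_{n}^{*} = T_{n}^{*}C^{*} + (\overline{[T_{n},C]})^{*}$ really holds as an operator identity on $\dom(C^{*})$ rather than just on a dense subspace. This is exactly what the hypotheses "$\ran(T_{n})\subseteq\dom(C)$", "$T_{n}[\dom(D)]\subseteq\dom(D)$", and "$[T_{n},C]$, $[T_{n},D]$ bounded" are tailored to supply, and the density of $\dom(C)\cap\dom(D)$ is what lets us identify $(C+D)^{*}$ with the honest adjoint in the first place; once these points are pinned down the limiting argument is soft.
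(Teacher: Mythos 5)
Your easy inclusion and your choice of approximants $v_{n}=T_{n}^{*}v$ follow the same strategy as the paper, but the phrase ``the analogous statement holds for $D$'' hides a genuine gap: the hypotheses on $C$ and $D$ are not symmetric. For $C$ your argument works because $\ran(T_{n})\subseteq\dom(C)$ together with closedness of $C$ gives $CT_{n}\in L(H)$ by the closed graph theorem, so $\phi\mapsto\langle C\phi,T_{n}^{*}v\rangle=\langle CT_{n}\phi,v\rangle\pm\langle\overline{[T_{n},C]}\phi,v\rangle$ is bounded and $T_{n}^{*}v\in\dom(C^{*})$ with $C^{*}T_{n}^{*}v=(CT_{n})^{*}v\pm\overline{[T_{n},C]}^{\,*}v$ (note: not $T_{n}^{*}C^{*}v+\cdots$, since $v$ is not known to lie in $\dom(C^{*})$). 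For $D$ the hypotheses only give the invariance $T_{n}[\dom(D)]\subseteq\dom(D)$: the operator $DT_{n}$ is defined only on $\dom(D)$, equals $T_{n}D-[T_{n},D]$ there and is unbounded in general, $\ran(T_{n})\subseteq\dom(D)$ and closedness of $D$ are not assumed, and $v$ is not known to lie in $\dom(D^{*})$; so the pairing $\langle D\phi,T_{n}^{*}v\rangle$ cannot be bounded by the same device, and your claim $v_{n}\in\dom(D^{*})$ is unsupported. This is exactly where the paper argues differently: it first shows $T_{n}^{*}v\in\dom((C+D)^{*})$ with $(C+D)^{*}T_{n}^{*}v=T_{n}^{*}(C+D)^{*}v+[T_{n},C]^{*}v+[T_{n},D]^{*}v$ (testing against $\eta\in\dom(C)\cap\dom(D)$, which $T_{n}$ leaves inside $\dom(C)\cap\dom(D)$), then gets $T_{n}^{*}v\in\dom(C^{*})$ as above, and finally obtains $T_{n}^{*}v\in\dom(D^{*})$ by subtraction, $\langle D\eta,T_{n}^{*}v\rangle=\langle\eta,(C+D)^{*}T_{n}^{*}v-C^{*}T_{n}^{*}v\rangle$ on $\dom(C)\cap\dom(D)$, extended to all of $\dom(D)$ via the observation that $\dom(C)\cap\dom(D)$ is dense in $\dom(D)$ for the graph norm of $D$ (proved using $T_{m}\eta\to\eta$ and $DT_{m}\eta=T_{m}D\eta-[T_{m},D]\eta\to D\eta$). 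Without this subtraction/graph-density step your proof does not go through.

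A second, repairable, flaw is in the limit step: strong convergence $T_{n}\to1$ does \emph{not} imply $T_{n}^{*}\to1$ strongly, nor does $\overline{[T_{n},C]}\to0$ strongly imply $\overline{[T_{n},C]}^{\,*}\to0$ strongly (consider powers of the unilateral shift and their adjoints), so your justification is incorrect as stated. However, only weak convergence is needed: uniform boundedness gives equiboundedness, hence $T_{n}^{*}\to1$ and the commutator adjoints tend to $0$ in the weak operator topology, so $(v_{n},(C^{*}+D^{*})v_{n})$ converges weakly to $(v,(C+D)^{*}v)$; since the graph of $\overline{C^{*}+D^{*}}$ is a closed subspace and therefore weakly closed, the conclusion $v\in\dom(\overline{C^{*}+D^{*}})$ with $\overline{C^{*}+D^{*}}\,v=(C+D)^{*}v$ still follows. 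So fix the $D$-domain argument along the paper's lines and weaken ``strongly'' to ``weakly'' in the passage to the limit.
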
 \begin{proof} Note that $(C+D)^{*}\supseteq\overline{C^{*}+D^{*}}$
is clear. So, let $\phi\in\dom((C+D)^{*})$ and for $n\in\N$ we define
$\phi_{n}\coloneqq T_{n}^{*}\phi$. At first we show that $\phi_{n}\in\dom((C+D)^{*})$.
For this, let $\eta\in\dom(C+D)$. We compute 
\begin{align*}
\langle(C+D)\eta,\phi_{n}\rangle & =\langle(C+D)\eta,T_{n}^{*}\phi\rangle\\
 & =\langle T_{n}(C+D)\eta,\phi\rangle\\
 & =\langle(C+D)T_{n}\eta,\phi\rangle+\langle[T_{n},C]\eta,\phi\rangle+\langle[T_{n},D]\eta,\phi\rangle\\
 & =\langle\eta,T_{n}^{*}(C+D)^{*}\phi\rangle+\langle\eta,[T_{n},C]^{*}\phi\rangle+\langle\eta,[T_{n},D]^{*}\phi\rangle,
\end{align*}
which shows that $(C+D)^{*}\phi_{n}=T_{n}^{*}(C+D)^{*}\phi+[T_{n},C]^{*}\phi+[T_{n},D]^{*}\phi$.
Next, note that $CT_{n}\in L(H)$ by the closed graph theorem and
$\overline{[T_{n},C]}\in L(H)$ by assumption. Hence, $\overline{T_{n}C}=\overline{[T_{n},C]}+CT_{n}\in L(H)$
as well and thus, we deduce that $\overline{T_{n}C}^{*}=C^{*}T_{n}^{*}\in L(H)$.
In particular, we infer that $T_{n}^{*}$ maps into $\dom(C^{*})$.
Hence, $\phi_{n}\in\dom(C^{*})$. Furthermore, for $\eta\in\dom(D)$,
we have that $T_{m}\eta\in\dom(D)\cap\dom(C)$ by assumption. Moreover,
we have 
\[
T_{m}\eta\to\eta,\text{ and }DT_{m}\eta=[D,T_{m}]\eta+T_{m}D\eta\to D\eta\quad(m\to\infty).
\]
Thus, $\dom(C+D)$ is dense in $\dom(D)$ with respect to the graph
norm of $D$. Altogether, we compute for all $\eta\in\dom(C+D)$ 
\begin{align*}
\langle D\eta,\phi_{n}\rangle & =\langle(C+D)\eta,\phi_{n}\rangle-\langle C\eta,\phi_{n}\rangle\\
 & =\langle\eta,(C+D)^{*}\phi_{n}\rangle-\langle\eta,C^{*}\phi_{n}\rangle,
\end{align*}
which proves that $\phi_{n}\in\dom(D^{*})$ and $D^{*}\phi_{n}=(C+D)^{*}\phi_{n}-C^{*}\phi_{n}$.
Hence, 
\[
T_{n}^{*}(C+D)^{*}\phi+[T_{n},C]^{*}\phi+[T_{n},D]^{*}\phi=(C+D)^{*}\phi_{n}=(D^{*}+C^{*})\phi_{n}.
\]
Next, we may let $n\to\infty$ in the latter equality and obtain the
assertion. \end{proof}

\begin{prop}\label{prop:core}Impose Assumption \ref{ass:1}. Then
$\dom(\partial_{0,\mu})$ is a core for $(\partial_{0,\mu}\mathcal{M})^{*}$.
\end{prop}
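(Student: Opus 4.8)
The plan is to identify $(\partial_{0,\mu}\mathcal{M})^{\ast}$ explicitly on $\dom(\partial_{0,\mu})$ and then to apply the standard characterisation of cores in terms of adjoints. Throughout fix $\mu\geq\nu$ and write $\mathcal{M}=\mathcal{M}_{\mu}$, which --- being linear and Lipschitz on the Hilbert space $L_{\mu}^{2}(\mathbb{R};H)$ --- is a bounded, everywhere defined operator; hence so is $\mathcal{M}^{\ast}$. I would first note that the commutator hypothesis $\mathcal{M}\partial_{0,\mu}\subseteq\partial_{0,\mu}\mathcal{M}-\mathcal{M}'$ already forces $\dom(\partial_{0,\mu})\subseteq\dom(\partial_{0,\mu}\mathcal{M})$: for $\phi\in\dom(\partial_{0,\mu})$ the inclusion says precisely that $\mathcal{M}\phi\in\dom(\partial_{0,\mu})$. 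In particular $\partial_{0,\mu}\mathcal{M}$ is densely defined, so $S\coloneqq(\partial_{0,\mu}\mathcal{M})^{\ast}$ is a genuine (single-valued) closed, densely defined operator.

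Next, using $\partial_{0,\mu}^{\ast}=-\partial_{0,\mu}+2\mu$, and hence $\dom(\partial_{0,\mu}^{\ast})=\dom(\partial_{0,\mu})$, I would compute, for $v\in\dom(\partial_{0,\mu})$ and arbitrary $u\in\dom(\partial_{0,\mu}\mathcal{M})$,
\[
\langle\partial_{0,\mu}\mathcal{M}u,v\rangle_{0,\mu}=\langle\mathcal{M}u,\partial_{0,\mu}^{\ast}v\rangle_{0,\mu}=\langle u,\mathcal{M}^{\ast}\partial_{0,\mu}^{\ast}v\rangle_{0,\mu}.
\]
This shows $v\in\dom(S)$ with $Sv=\mathcal{M}^{\ast}\partial_{0,\mu}^{\ast}v$; and since $\mathcal{M}^{\ast}$ is bounded and everywhere defined, $\dom(\mathcal{M}^{\ast}\partial_{0,\mu}^{\ast})=\dom(\partial_{0,\mu}^{\ast})=\dom(\partial_{0,\mu})$, so the restriction $S_{0}\coloneqq S|_{\dom(\partial_{0,\mu})}$ equals, as an operator, $\mathcal{M}^{\ast}\partial_{0,\mu}^{\ast}$.

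It then remains to take one further adjoint. By the elementary identity $(BA)^{\ast}=A^{\ast}B^{\ast}$, valid whenever $B$ is bounded and everywhere defined and $A$ is densely defined --- here with $B=\mathcal{M}^{\ast}$ and $A=\partial_{0,\mu}^{\ast}$ --- together with $\partial_{0,\mu}^{\ast\ast}=\partial_{0,\mu}$ and $\mathcal{M}^{\ast\ast}=\mathcal{M}$, we obtain $S_{0}^{\ast}=(\mathcal{M}^{\ast}\partial_{0,\mu}^{\ast})^{\ast}=\partial_{0,\mu}\mathcal{M}$. Since, moreover, the composition of the closed operator $\partial_{0,\mu}$ with the bounded everywhere defined operator $\mathcal{M}$ from the right is again closed, $\partial_{0,\mu}\mathcal{M}$ is closed and therefore equals $(\partial_{0,\mu}\mathcal{M})^{\ast\ast}=S^{\ast}$. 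Hence $S_{0}^{\ast}=S^{\ast}$; and since a dense subspace of the domain of a closed densely defined operator $T$ is a core for $T$ precisely when the adjoint of the corresponding restriction equals $T^{\ast}$, applying this with $T=S$ and the restriction $S_{0}$ completes the argument.

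I do not expect a serious obstacle; the only point demanding care is the bookkeeping with unbounded adjoints --- at each step one must keep one factor (resp.\ summand) bounded and everywhere defined so that the relevant product (resp.\ sum) rule for adjoints is an equality rather than a mere inclusion, and one must remember that $\partial_{0,\mu}\mathcal{M}$ is \emph{already} closed. If a more hands-on argument is preferred, one may instead take $v\in\dom(S)$, use the adjoint of the commutator relation to see that $\mathcal{M}^{\ast}v\in\dom(\partial_{0,\mu})$ with $Sv=\partial_{0,\mu}^{\ast}\mathcal{M}^{\ast}v+\mathcal{M}'^{\ast}v$, and then regularise $v$ by convolution in time, $v_{\eps}\to v$ with $v_{\eps}\in\dom(\partial_{0,\mu})$, controlling the commutator of $\mathcal{M}^{\ast}$ with the mollifier; but the adjoint computation above is shorter.
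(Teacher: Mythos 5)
Your proof is correct and is essentially the paper's own argument written out in full: the paper's proof consists precisely of the observations $\dom(\partial_{0,\mu})=\dom(\partial_{0,\mu}^{*})$ and $(\partial_{0,\mu}\mathcal{M})^{*}=\overline{\mathcal{M}^{*}\partial_{0,\mu}^{*}}$, and your restriction/double-adjoint computation is exactly a proof of the latter identity. The only cosmetic point is that the commutator hypothesis is formulated for the unextended $\mathcal{M}$, so it directly yields only $\mathring{C}_{\infty}(\mathbb{R};H)\subseteq\dom(\partial_{0,\mu}\mathcal{M}_{\mu})$ rather than all of $\dom(\partial_{0,\mu})$; but this set is dense, which is all your argument actually requires (the full containment then also follows by approximating in the graph norm of $\partial_{0,\mu}$).
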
 \begin{proof} It suffices to observe that $\dom(\partial_{0,\mu})=\dom(\partial_{0,\mu}^{*})$
and $(\partial_{0,\mu}\mathcal{M})^{*}=\overline{\mathcal{M}^{*}\partial_{0,\mu}^{*}}$.
\end{proof}

\begin{proof}[Proof of Theorem \ref{thm:st}] Let $\mu\geq\nu$.
First of all note that $\mathcal{B}$ is densely defined, since $\mathring{C}_{\infty}(\mathbb{R};\dom(A))\subseteq\dom(\mathcal{B})$.
Moreover, note that 
\begin{equation}
\Re\langle\mathcal{B}\phi,\phi\rangle_{L_{\mu}^{2}}\geq c\langle\phi,\phi\rangle_{L_{\mu}^{2}}\label{eq:st1}
\end{equation}
by Assumption \ref{ass:1}. Next, since $\mathcal{B}$ is densely
defined, we can use \cite[Theorem 4.2.5]{Beyer2007}, to deduce that
$\mathcal{B}$ is closable. Note that inequality \eqref{eq:st1} remains
true for $\phi\in\dom(\overline{\mathcal{B}})$.

We apply Lemma \ref{lem:pos_def} to the operator $\overline{\mathcal{B}}$.
For this, we compute the adjoint of $\mathcal{B}$. With the setting
$C\coloneqq\partial_{0,\mu}\mathcal{M}_{\mu}+\mathcal{N}_{\mu}$,
$D\coloneqq A$ and $T_{n}\coloneqq(1+(1/n)\partial_{0,\mu})^{-1}$,
we employ Lemma \ref{lem:comadjo}. We check the hypothesis of Lemma
\ref{lem:comadjo} next. First of all, note that $T_{n}$ is well-defined
with $\|T_{n}\|\leq1$ and that $T_{n}\to1$ in the strong operator
topology. Clearly, $T_{n}$ leaves $\dom(D)$ invariant and attains
values in $\dom(C)$. Moreover, both the operators 
\[
T_{n}C\subseteq\partial_{0,\mu}T_{n}\mathcal{M}_{\mu}+T_{n}\mathcal{N}_{\mu}\text{ and }CT_{n}=\big(\partial_{0,\mu}\mathcal{M}_{\mu}+\mathcal{N}_{\mu}\big)T_{n}=\big(\mathcal{M}_{\mu}'+\mathcal{M}_{\mu}\partial_{0,\mu}+\mathcal{N}_{\mu}\big)T_{n}
\]
are densely defined and bounded. Thus, so is 
\[
[T_{n},C]=[T_{n},\mathcal{N}_{\mu}]+\frac{1}{n}\partial_{0,\mu}T_{n}\mathcal{M}_{\mu}'T_{n}.
\]
It is not difficult to see that $\overline{[T_{n},C]}\to0$ as $n\to\infty$.
Observe that 
\[
[T_{n},D]\subseteq0.
\]
So that $\overline{[T_{n},D]}=0\to0$ as $n\to\infty$. Thus, by Lemma
\ref{lem:comadjo}, we infer 
\[
\mathcal{B}^{*}=(C+D)^{*}=\overline{C^{*}+D^{*}}=\overline{\left(\partial_{0,\mu}\mathcal{M}_{\mu}+\mathcal{N}_{\mu}\right)^{*}+A^{*}}.
\]
By the boundedness of $\mathcal{N}_{\mu}$, we deduce that $\left(\partial_{0,\mu}\mathcal{M}_{\mu}+\mathcal{N}_{\mu}\right)^{*}=\left(\partial_{0,\mu}\mathcal{M}_{\mu}\right)^{*}+\mathcal{N}_{\mu}^{*}$.
Thus, by Proposition \ref{prop:core}, $\dom(\partial_{0,\mu})$ is
an operator core for $C^{*}$. For $\phi\in\dom(\partial_{0,\mu})\subseteq\dom(C)$
we compute 
\[
\Re\langle C^{*}\phi,\phi\rangle_{\mu}=\Re\langle\phi,C\phi\rangle_{\mu}\geq c\langle\phi,\phi\rangle.
\]
Thus, $\Re\langle C^{*}\phi,\phi\rangle\geq c\langle\phi,\phi\rangle$
for all $\phi\in\dom(C^{*})$. Moreover, since $A$ is densely defined
and m-accretive, $A^{*}$ is accretive, as well, see \cite{Phillips1959}.
Thus, altogether $\Re\langle\mathcal{B}^{*}\phi,\phi\rangle\geq c\langle\phi,\phi\rangle$.
Therefore, Lemma \ref{lem:pos_def} implies that $\mathcal{B}$ is continuously
invertible and has dense range. In particular, we obtain $\mathcal{B}^{-1}$
is densely defined and has operator norm bounded by $1/c$ so that
$\mathcal{B}^{-1}$ is evolutionary at $\nu$.

For $\psi\in\dom(\mathcal{B}^{-1})$, $\phi\coloneqq\mathcal{B}^{-1}\psi$
we furthermore realize that the inequality 
\[
\Re\langle\mathcal{B}\phi,Q_{t}\phi\rangle\geq c\langle Q_{t}\phi,Q_{t}\phi\rangle
\]
implies 
\[
\|Q_{t}\mathcal{B}^{-1}\psi\|\leq\frac{1}{c}\|Q_{t}\psi\|,
\]
which by Proposition \ref{prop:causal} (iii) is sufficient for causality
of $\overline{\mathcal{B}}^{-1}$. \end{proof}

Next, we slightly rephrase the main result of \cite{Trostorff2014}.
There, a well-posedness result for non-autonomous differential inclusions
is stated, where the operator $A$ is replaced by a maximal monotone
relation on a Hilbert space $H$ (for an introduction to maximal monotone
relations on Hilbert spaces we refer to the monograph \cite{Brezis}).
As a trade-off, we need to restrict the class of admissible operators
$\mathcal{M}$ and $\mathcal{N}$:

\begin{theorem}[{{\cite[Theorem 3.4]{Trostorff2014}}}]\label{thm:nonlinear}
Let $H$ be a separable Hilbert space, let $M,N\colon\mathbb{R}\to L(H)$
be strongly measurable and bounded mappings. Assume that $M(t)$ is
selfadjoint for all $t\in\mathbb{R}$, $M$ Lipschitz continuous,
$A\subseteq H\oplus H$ a maximal monotone relation with $(0,0)\in A$.
Moreover, assume that $K\coloneqq\kar(M(t))=\kar(M(0))$ for all $t\in\mathbb{R}$
and that there exists $c>0$ such that for all $t\in\mathbb{R}$ 
\[
\langle M(t)\phi,\phi\rangle\geq c\langle\phi,\phi\rangle,\ \Re\langle N(t)\psi,\psi\rangle\geq c\langle\psi,\psi\rangle
\]
for all $\phi\in K$ and $\psi\in K^{\bot}$.

Then there exists $\nu>0$, $C\geq0$ such that for all $\mu\geq\nu$
\[
\mathcal{S}_{\mu}\coloneqq\overline{\left(\partial_{0,\mu}\mathcal{M}+\mathcal{N}+A\right)}^{-1}\colon L_{\nu}^{2}(\mathbb{R};H)\to L_{\nu}^{2}(\mathbb{R};H)
\]
is Lipschitz continuous with $\|\mathcal{S}_{\mu}\|_{\textnormal{Lip}}\leq C$,
causal and independent of $\mu$, where $\mathcal{M},\mathcal{N}$
denote the abstract multiplication operators given by $(\mathcal{M}\phi)(t)=M(t)\phi(t)$
and $(\mathcal{N}\phi)(t)=N(t)\phi(t)$, respectively. In particular,
$\mathcal{S}_{\mu}|_{\mathring{C}_{\infty}(\mathbb{R};H)}$ is densely defined,
causal and evolutionary at $\nu$. \end{theorem}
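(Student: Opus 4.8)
The plan is to follow the architecture of the proof of Theorem~\ref{thm:st}, replacing the linear Lax--Milgram-type Lemma~\ref{lem:pos_def} by its counterpart for maximal monotone relations and coping with the degeneracy of $M$ by passing to $\mu$ above a threshold. Write $\mathcal M,\mathcal N$ for the multiplication operators induced by $M,N$ and $\mathcal M'$ for the $L^{\infty}$-multiplier induced by the a.e.\ derivative of $M$ (which exists, is bounded and selfadjoint since $M$ is Lipschitz and selfadjoint-valued), so that $\mathcal M\partial_{0,\mu}\subseteq\partial_{0,\mu}\mathcal M-\mathcal M'$ as in Assumption~\ref{ass:1}. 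Since $K=\kar(M(t))=\kar(M(0))$ is independent of $t$, decompose $H=K\oplus K^{\perp}$ and $L_{\mu}^{2}(\mathbb R;H)=L_{\mu}^{2}(\mathbb R;K)\oplus L_{\mu}^{2}(\mathbb R;K^{\perp})$; relative to this splitting $\mathcal M$ (and $\mathcal M'$) has only a $(2,2)$-block, with $\mathcal M_{22}\ge c$, whereas $\mathcal N$ is a full $2\times2$ array of bounded operators. The integration-by-parts identity $\Re\langle\partial_{0,\mu}\mathcal M\phi,\phi\rangle_{\mu}=\mu\langle\mathcal M\phi,\phi\rangle_{\mu}+\tfrac12\langle\mathcal M'\phi,\phi\rangle_{\mu}$ (for $\phi\in\mathring{C}_{\infty}(\mathbb R;H)$) shows that the $(2,2)$-block of $\partial_{0,\mu}\mathcal M+\mathcal N$ is coercive on $L_{\mu}^{2}(\mathbb R;K^{\perp})$ with constant $\mu c-O(1)$, while the $(1,1)$-block $\mathcal N_{11}$ is coercive on $L_{\mu}^{2}(\mathbb R;K)$ with constant $c$ by hypothesis. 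Absorbing the two bounded off-diagonal blocks into these positive contributions by Young's inequality --- which is where ``$\mu$ large'' enters --- gives $c'>0$ and a threshold $\nu_{0}\ge\nu$ so that $\Re\langle(\partial_{0,\mu}\mathcal M+\mathcal N)\phi,\phi\rangle_{\mu}\ge c'\langle\phi,\phi\rangle_{\mu}$ for all $\mu\ge\nu_{0}$ and $\phi\in\mathring{C}_{\infty}(\mathbb R;H)$, with the same estimate for the adjoint (using $\partial_{0,\mu}^{*}=-\partial_{0,\mu}+2\mu$) and its cut-off version $\Re\langle(\partial_{0,\mu}\mathcal M+\mathcal N)\phi,Q_{t}\phi\rangle_{\mu}\ge c'\langle Q_{t}\phi,Q_{t}\phi\rangle_{\mu}$ for all $t\in\mathbb R$.

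Set $B\coloneqq\partial_{0,\mu}\mathcal M+\mathcal N$ with $\mu\ge\nu_{0}$. As in the proof of Theorem~\ref{thm:st} (via Lemma~\ref{lem:comadjo} and Proposition~\ref{prop:core}) one identifies $B^{*}=\overline{\mathcal M\partial_{0,\mu}^{*}+\mathcal N^{*}}$, so the coercivity of $B$ and of $B^{*}$ and Lemma~\ref{lem:pos_def} yield $\overline{B}^{-1}\in L(L_{\mu}^{2}(\mathbb R;H))$ with $\|\overline{B}^{-1}\|\le1/c'$, and $\overline{B}$ is m-accretive. The heart of the argument is now the relational analogue of Lemma~\ref{lem:pos_def} --- essentially \cite[Theorem~3.4]{Trostorff2014}: for densely defined linear $B$ with $\Re\langle B\phi,\phi\rangle\ge c'|\phi|^{2}$, $\Re\langle B^{*}\psi,\psi\rangle\ge c'|\psi|^{2}$, and maximal monotone $A$ with $(0,0)\in A$, the operator $\overline{B+A}$ is boundedly invertible with Lipschitz inverse of constant $\le1/c'$. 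Injectivity and the Lipschitz bound are immediate from monotonicity of $A$ and the estimate for $B$; surjectivity is the substance. For that I would regularise $A$ by its Yosida approximation $A_{\lambda}$ (monotone, Lipschitz, everywhere defined, $A_{\lambda}(0)=0$), solve $\overline{B}u_{\lambda}+A_{\lambda}u_{\lambda}=f$ --- uniquely, since $\overline{B}$ is m-accretive and $\overline{B}+A_{\lambda}$ is coercive with constant $c'$, e.g.\ by the method of continuity along $\tau\mapsto\overline{B}+\tau A_{\lambda}$ starting from the surjective $\overline{B}$, or by the sum theorem for maximal monotone operators (cf.\ \cite{Brezis}) --- use the uniform bound $|u_{\lambda}|\le|f|/c'$ to extract a weak limit, and identify it as a solution of $\overline{B+A}\ni(u,f)$ by Minty's monotonicity trick together with the maximal monotonicity of $A$. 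This produces $\mathcal S_{\mu}\coloneqq\overline{\partial_{0,\mu}\mathcal M+\mathcal N+A}^{-1}$ as a Lipschitz map on $L_{\mu}^{2}(\mathbb R;H)$ with $\|\mathcal S_{\mu}\|_{\textnormal{Lip}}\le C\coloneqq1/c'$, for every $\mu\ge\nu_{0}$.

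Causality of $\mathcal S_{\mu}$ follows as at the end of the proof of Theorem~\ref{thm:st}: for given $\psi$ put $u\coloneqq\mathcal S_{\mu}\psi$ and choose $v$ with $(u,v)\in A$ and $\overline{B}u+v=\psi$; since $A$ acts pointwise in time and $(0,0)\in A$, the lifted relation sends $(u,v)$ to $(Q_{t}u,Q_{t}v)\in A$, so monotonicity gives $\Re\langle v,Q_{t}u\rangle=\Re\langle Q_{t}v,Q_{t}u\rangle\ge0$ (the complementary part being orthogonal to $Q_{t}u$), and the cut-off coercivity yields $c'\|Q_{t}u\|_{\mu}^{2}\le\Re\langle\overline{B}u,Q_{t}u\rangle_{\mu}\le\|Q_{t}\psi\|_{\mu}\|Q_{t}u\|_{\mu}$, hence $\|Q_{t}\mathcal S_{\mu}\psi\|_{\mu}\le\tfrac1{c'}\|Q_{t}\psi\|_{\mu}$ and Proposition~\ref{prop:causal}(iii) applies. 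Finally, independence of $\mu$ for $\mu\ge\nu_{0}$ follows from Lemma~\ref{lem:indep_of_evo} applied to the evolutionary mapping $\mathcal S\coloneqq\mathcal S_{\nu_{0}}|_{\mathring{C}_{\infty}(\mathbb R;H)}$, whose density hypothesis is checked by the cut-off argument used for Lemma~\ref{lem:criteriona_for_causal} (compare \cite[Theorem~4.5]{KPSTW14_OD}); relabelling $\nu_{0}$ as $\nu$ gives the stated $\nu$, $C$, and the fact that $\mathcal S_{\mu}|_{\mathring{C}_{\infty}(\mathbb R;H)}$ is densely defined, causal and evolutionary at $\nu$.

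The main obstacle is the surjectivity step in the second paragraph: the relational version of Lemma~\ref{lem:pos_def} is genuinely stronger than its linear ancestor, and the limit $\lambda\to0$ must be handled with care --- in particular one has to extract enough control on $(A_{\lambda}u_{\lambda})_{\lambda}$ to identify its weak limit with an element of $A$. A secondary point is bookkeeping: the coercivity constant $c'$, hence the eventual Lipschitz constant of $\mathcal S$, must be kept uniform over $\mu\ge\nu_{0}$, which works only because $\kar M(t)$ is $t$-independent, and the mere Lipschitz continuity of $M$ (so that $\mathcal M'$ is an $L^{\infty}$-multiplier, not a continuous one) is a further minor technicality in the integration-by-parts identity.
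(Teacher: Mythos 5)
The paper does not actually prove this theorem: it is quoted from \cite[Theorem 3.4]{Trostorff2014}, and the authors explicitly remark that its proof is ``completely different'' from that of Theorem~\ref{thm:st} and ``rests on perturbation results for maximal monotone relations'', omitting it as long and technical. So your proposal must be judged on its own merits. Its deterministic half is sound: the block decomposition along $H=K\oplus K^{\perp}$, the integration-by-parts identity $\Re\langle\partial_{0,\mu}\mathcal M\phi,\phi\rangle_{\mu}=\mu\langle\mathcal M\phi,\phi\rangle_{\mu}+\tfrac12\langle\mathcal M'\phi,\phi\rangle_{\mu}$, and the absorption of the off-diagonal blocks of $\mathcal N$ for large $\mu$ do yield the (cut-off) coercivity of $\partial_{0,\mu}\mathcal M+\mathcal N$ and of its adjoint; this is essentially \cite[Lemma 2.6]{PTWW13_NA}. (You also read the positivity hypothesis the intended way: as printed, with $\phi\in K=\kar(M(t))$, it would force $K=\{0\}$; the roles of $K$ and $K^{\perp}$ are swapped.) The causality estimate and the $\mu$-independence at the end are likewise fine, modulo the usual caveat that for $u$ in the domain of the \emph{closure} one must argue along approximating sequences rather than picking $v$ with $(u,v)\in A$ directly.

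The genuine gap is the ``relational analogue of Lemma~\ref{lem:pos_def}'' on which the whole argument hinges. In the generality in which you state it --- $B$ densely defined linear with coercive $B$ and $B^{*}$, $A$ an arbitrary maximal monotone relation with $(0,0)\in A$, conclude that $\overline{B+A}$ has a Lipschitz inverse --- it is false: the sum of two maximal monotone operators need not be maximal monotone without a constraint qualification (Rockafellar), and coercivity of $B$ does not repair this. Correspondingly, your Yosida scheme stalls exactly where you flag it. From $\overline{B}u_{\lambda}+A_{\lambda}u_{\lambda}=f$ and monotonicity you obtain only $|u_{\lambda}|\le|f|/c'$; there is no a priori bound on $|A_{\lambda}u_{\lambda}|$ (equivalently on $|\overline{B}u_{\lambda}|$), hence no control on $u_{\lambda}-J_{\lambda}u_{\lambda}=\lambda A_{\lambda}u_{\lambda}$, and Minty's trick cannot identify the limit as an element of $\overline{B+A}$. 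The missing ingredient is precisely the special structure of $A$ in this theorem: it is the lift of a time-independent relation on $H$ acting pointwise in time, so its resolvents commute with time translations and cut-offs and interact compatibly with the resolvents and Yosida approximants of $\partial_{0,\mu}$. It is this compatibility that the perturbation results of \cite{Trostorff2012,Trostorff2014} exploit to obtain the required bounds (or to verify the range condition directly), and without invoking it your surjectivity step does not close.
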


Although the latter theorem is a direct analogue of Theorem \ref{thm:st}
in the nonlinear setting, its proof is completely different and rests
on perturbation results for maximal monotone relations. As the proof
is quite long and technical, we omit it here and refer to \cite{Trostorff2014} instead.

\section{Stochastic evolutionary equations}

\label{sec:stochastics}

Similar to the approach outlined in \cite{SW16_SD}, we present the
solution theory for stochastic partial differential equations based
on Theorem \ref{thm:cmp}. For this we first need to establish a suitable
functional analytic formulation for the stochastic integral. In contrast
to \cite{SW16_SD}, where the authors focused on the case of Hilbert space
valued Wiener processes, we shall favor a more axiomatic approach here.
Indeed, this gives us more freedom for the choice of the stochastic processes
in the integral. For this, we will introduce a class of `admissible' processes
and corresponding stochastic integrals. We mainly follow the rationale
presented in \cite{Metivier1980}.

\subsection{An abstract description of stochastic integration}

\label{sec:SI}

Throughout, we denote by $(\Omega,\Sigma,\mathbb{P})$ a probability
space. Moreover, we fix a filtration $\mathcal{F}\coloneqq(\Sigma_{t})_{t\in\mathbb{R}}$,
i.e. a family of sub-$\sigma$-algebras of $\Sigma$ satisfying 
\[
\Sigma_{s}\subseteq\Sigma_{t}\quad(s\leq t).
\]
Moreover, we fix separable Hilbert spaces $G,H$ and a subspace $L\subseteq L(G,H)$
equipped with a Banach norm $\|\cdot\|_{L}$ such that 
\[
(L,\|\cdot\|_{L})\hookrightarrow(L(G,H),\|\cdot\|).
\]
\begin{definition} We collect some notions, which are needed in the following. 
\begin{enumerate}[label=(\alph{enumi})]
\item We consider the following collection of sets 
\[
\big\{]s,t]\times A\,;\,s,t\in\R,s<t,A\in\Sigma_{s}\big\}\subseteq\mathcal{P}(\R\times\Omega).
\]
The $\sigma$-algebra generated by those sets is denoted by $\mathcal{B}_{\mathcal{F}}$
and is called the $\sigma$-algebra of \emph{$\mathcal{F}$-predictable
sets}. 
\item A mapping $X:\R\times\Omega\to Z$, where $Z$ is a Banach space,
is called a \emph{stochastic process}, if for each $t\in\R$ the mapping
$X_{t}=X(t,\cdot):\Omega\to Z$ is measurable. 
\item A stochastic process $X:\R\times\Omega\to Z$ is called \emph{$\mathcal{F}$-adapted},
if $X_{t}$ is $\Sigma_{t}$-measurable for each $t\in\R$. $X$ is
called \emph{$\mathcal{F}$-predictable}, if $X$ is $\mathcal{B}_{\mathcal{F}}$-measurable.
For $\nu\geq0$ we define 
\[
L_{\nu,\mathrm{pr}}^{2}\big(\R;L^{2}(\P;Z)\big)\coloneqq\{X\in L_{\nu}^{2}\big(\R;L^{2}(\P;Z)\big)\,;\,X\text{ predictable}\},
\]
which is a closed subspace of $L_{\nu}^{2}\big(\R;L^{2}(\P;Z)\big)$. 
\end{enumerate}
\end{definition}

We now fix a stochastic process $X$ attaining values in $G$. The
goal is now to define stochastic integration with respect to this
process $X$. The integrands are suitable stochastic processes attaining
values in $L$ and the integral should be an element in $H^{\Omega}$.

We start by defining 
\[
I^{X}(T\chi_{]s,t]\times A})\coloneqq\intop_{\R}T\chi_{]s,t]\times A}\,\d X\coloneqq(\omega\mapsto\chi_{A}(\omega)T(X_{t}(\omega)-X_{s}(\omega)).
\]
where $s<t,A\in\Sigma_{s}$ and $T\in L$. Clearly, this integral
operator $I^{X}$ can be extend to a linear operator on simple $\mathcal{F}$-predictable
processes $Y:\mathbb{R}\times\Omega\to L$. We denote this linear
extension again by $I^{X}$.

Moreover, if $X_{t}\in L^{2}(\mathbb{P};G)$ for each $t\in\R$ we
immediately get that $I^{X}$ attains values in $L^{2}(\mathbb{P};H)$.
The main idea is now to extend this integral operator to a broader
class of processes. For doing so, we need to restrict to a certain
class of processes $X$.

\begin{definition} \label{def:pr} Let $X:\mathbb{R}\times\Omega\to G$ be such that $X_{t}\in L^{2}(\mathbb{P};G)$
for each $t\in\R$. We call $X$ an \emph{$L^{2}$-primitive}, if
there exists a measure $\alpha:\mathcal{B}_{\mathcal{F}}\to[0,\infty]$
and $C\geq0$ such that 
\[
\Big|I^{X}\big(\sum_{i=0}^{n}T_{i}\chi_{]s_{i},t_{i}]\times A_{i}}\big)\Big|_{L^{2}(\mathbb{P};H)}\leq C\Big|\sum_{i=0}^{n}T_{i}\chi_{]s_{i},t_{i}]\times A_{i}}\Big|_{L^{2}(\alpha;L)},
\]
where $n\in\N,s_{i}<t_{i},A_{i}\in\Sigma_{s_{i}}$ and $T_{i}\in L$, $i\in \{0,\ldots,n\}$.
\\
 In this case, $\alpha$ is called a \emph{dominating measure for
$X$}. We denote by $I^{X,\alpha}$ the unique extension of $I^{X}$
to a bounded linear operator 
\[
I^{X,\alpha}:L^{2}(\alpha;L)\to L^{2}(\mathbb{P};H)
\]
and call it \emph{the stochastic integral with respect to $X$} on
$L^{2}(\alpha;L)$. For $Y\in L^{2}(\alpha;L)$ we also write 
\[
\intop_{\R}Y\,\d X\coloneqq I^{X,\alpha}(Y).
\]

\end{definition}

\begin{remark} We note that in the latter definition the measure $\alpha$
may not be uniquely determined. Thus, the latter definition allows
for the extension of the stochastic integral in various spaces. It
is clear, however, that for two dominating measures $\alpha_{1},\alpha_{2}$
we have that the two extension $I^{X,\alpha_{1}}$ and $I^{X,\alpha_{2}}$
coincide on the intersection $L^{2}(\alpha_{1};L)\cap L^{2}(\alpha_{2};L)$.
\end{remark}

\begin{ex}\label{ex:sta}By \cite[Section 2.6]{Metivier1980} the following processes
are $L^{2}$-primitives. 
\begin{enumerate}[label=(\alph{enumi}), ref=(\alph{enumi})]
\item \label{independent} Let $X$ satisfy $X_{t}\in L^{2}(\Omega,\Sigma_{t},\mathbb{P};G)$
for each $t\in\mathbb{R}$ (i.e., $X$ is $\mathcal{F}$-\emph{adapted})
and assume that 
\[
\R\ni t\mapsto|X_{t}|_{L^{2}(\P;G)}
\]
is right continuous. Moreover, we assume that $X$ has independent
and centered increments, i.e., for each $s,t\in\R$ with $s<t$ and
each $x\in G$ we have that 
\[
\omega\mapsto\langle X_{t}(\omega)-X_{s}(\omega),x\rangle_{G}
\]
is independent of $\Sigma_{s}$ and 
\[
\intop_{\Omega}\big(X_{t}(\omega)-X_{s}(\omega)\big)\,\d\mathbb{P}(\omega)=0.
\]
Then $X$ is an $L^{2}$-primitive with dominating measure $\alpha=\mu\otimes\P$,
where $\mu$ is the Stieltjes measure associated with the function
\[
t\mapsto|X_{t}|_{L^{2}(\P;G)}.
\]
In particular, the Hilbert space valued Wiener process $W$ is an
$L^{2}$-primitive, if we choose $\Sigma_{t}\coloneqq\sigma(W_{s};s\leq t)$. 
\item More generally, if $X:\mathbb{R}\times\Omega\to G$ is an $\mathcal{F}$-martingale,
such that $X_{t}\in L^{2}(\Omega,\Sigma_{t},\P;G)$ for each $t\in\R$
and 
\[
t\mapsto|X_{t}|_{L^{2}(\mathbb{P};G)}
\]
is right-continuous, then $X$ is an $L^{2}$-primitive with dominating
measure $\alpha=d_{|X|_{G}^{2}}$, the \emph{Doleans-measure} of the
submartingale $|X|_{G}^{2}$ (see e.g. \cite[Section 1.20]{Metivier1980}
or \cite{Doleans1968}). 
\end{enumerate}
\end{ex}

Our next goal is to introduce a primitive of an $L$-valued process
$Y$ with respect to a $G$-valued process $X$.

\begin{prop}\label{prop:int_evo} Let $X:\R\times\Omega\to G$ be
an $L^{2}$-primitive with dominating measure $\alpha$. Moreover,
for $\nu>0$ we define the space $L_{\nu}^{2}(\alpha;L)$ as the space
of $L$-valued, $\mathcal{F}$-predictable processes $Y$ satisfying
\[
\intop_{\R\times\Omega}|Y(t,\omega)|_{L}^{2}\exp(-2\nu t)\,\d\alpha(t,\omega)<\infty.
\]
For $\mu>0$ we consider the operator 
\[
\mathcal{I}^{X,\alpha}:S(\alpha;L)\subseteq\bigcap_{\nu\geq\mu}L_{\nu}^{2}(\alpha;L)\to\bigcap_{\nu\geq\mu}L_{\nu}^{2}\big(\R;L^{2}(\P;H)\big)
\]
 given by 
\[
\mathcal{I}^{X,\alpha}(Y)\coloneqq(t\mapsto I^{X}(\chi_{\R_{\leq t}}Y)),
\]
where $S(\alpha;L)$ denotes the space of simple $L$-valued, predictable
processes.

Then $\mathcal{I}^{X,\alpha}$ is evolutionary at $\mu$ and densely
defined. More precisely, there exists a constant $C\geq0$ such that
\[
\|\mathcal{I}_{\nu}^{X,\alpha}\|\leq\frac{C}{\sqrt{2\nu}}\quad(\nu\geq\mu).
\]
Moreover, $\mathcal{I}_{\nu}^{X,\alpha}$ is causal and $\mathcal{I}_{\nu}^{X,\alpha}$
and $\mathcal{I}_{\tilde{\nu}}^{X,\alpha}$ coincide on the intersection
$L_{\nu}^{2}(\alpha;L)\cap L_{\tilde{\nu}}^{2}(\alpha;L)$ for each
$\nu,\tilde{\nu}\geq\mu$. \end{prop}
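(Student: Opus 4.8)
The plan is to verify that $\mathcal{I}^{X,\alpha}$ satisfies the two defining conditions of an evolutionary mapping (Definition~\ref{def:evolutionary}), to establish the explicit norm bound, and then to deal with causality and the independence of $\nu$. First I would observe that the key estimate is purely a rescaling of the $L^2$-primitive bound from Definition~\ref{def:pr}: for a simple predictable process $Y=\sum_{i}T_i\chi_{]s_i,t_i]\times A_i}$ and fixed $t\in\mathbb{R}$, the increment $I^{X}(\chi_{\mathbb{R}_{\leq t}}Y)$ is again an integral of a simple predictable process (obtained by intersecting the time-intervals with $\mathbb{R}_{\leq t}$), so the dominating-measure inequality gives $|I^{X}(\chi_{\mathbb{R}_{\leq t}}Y)|_{L^2(\mathbb{P};H)}\leq C|\chi_{\mathbb{R}_{\leq t}}Y|_{L^2(\alpha;L)}$. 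Squaring, multiplying by $\exp(-2\nu t)$, integrating $t$ over $\mathbb{R}$, applying Tonelli to exchange the $t$-integral with the $\alpha$-integral, and using $\int_{s}^{\infty}\exp(-2\nu t)\,\mathrm{d}t=\tfrac{1}{2\nu}\exp(-2\nu s)$ yields $\|\mathcal{I}^{X,\alpha}(Y)\|_{L^2_\nu(\mathbb{R};L^2(\mathbb{P};H))}^2\leq\tfrac{C^2}{2\nu}\|Y\|_{L^2_\nu(\alpha;L)}^2$. This simultaneously gives Lipschitz continuity on $S(\alpha;L)$ in every $L^2_\nu$-norm (condition (i), since $\mathcal{I}^{X,\alpha}$ is linear), the quantitative bound $\|\mathcal{I}^{X,\alpha}_\nu\|\leq C/\sqrt{2\nu}$, and hence $\|\mathcal{I}^{X,\alpha}\|_{\mathrm{ev},\mathrm{Lip}}=\limsup_{\nu\to\infty}C/\sqrt{2\nu}=0<\infty$ (condition (ii)). Dense definedness follows because simple predictable processes are dense in $L^2_\nu(\alpha;L)$ for each $\nu\geq\mu$ by the usual measure-theoretic approximation argument.

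Next I would address causality. Here I would apply Lemma~\ref{lem:criteriona_for_causal}: it suffices to check that $\dom(\mathcal{I}^{X,\alpha})\cap\dom(\mathcal{I}^{X,\alpha}\chi_{(-\infty,a]})$ is dense in $\dom(\mathcal{I}^{X,\alpha})=S(\alpha;L)$ for all $a\in\mathbb{R}$ and $\nu\geq\mu$. But $S(\alpha;L)$ is stable under multiplication by $\chi_{(-\infty,a]}$ (truncating the intervals $]s_i,t_i]$ at $a$ again produces a simple predictable process), so this domain condition is satisfied trivially, and Lemma~\ref{lem:criteriona_for_causal} gives causality of $\mathcal{I}^{X,\alpha}_\nu$ for all $\nu\geq\mu$. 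Alternatively — and perhaps more transparently — one checks directly that $Q_t\circ\mathcal{I}^{X,\alpha}=Q_t\circ\mathcal{I}^{X,\alpha}\circ Q_t$ on simple processes, since $(\mathcal{I}^{X,\alpha}(Y))(s)=I^X(\chi_{\mathbb{R}_{\leq s}}Y)$ for $s\leq t$ only sees the restriction of $Y$ to $\mathbb{R}_{\leq t}$, and then invokes Proposition~\ref{prop:causal}.

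Finally, the independence of the exponential weight: for $\nu,\tilde\nu\geq\mu$ and $Y\in L^2_\nu(\alpha;L)\cap L^2_{\tilde\nu}(\alpha;L)$, choose simple predictable processes $Y_n\to Y$ in $L^2_{\min(\nu,\tilde\nu)}(\alpha;L)$ that can be taken to converge in both norms simultaneously (e.g.\ by a truncation-and-approximation that controls both weights, using that $Y$ lies in both spaces); since $\mathcal{I}^{X,\alpha}$ is defined by the \emph{same} formula on simple processes regardless of the weight, $\mathcal{I}^{X,\alpha}_\nu(Y)=\lim_n\mathcal{I}^{X,\alpha}(Y_n)=\mathcal{I}^{X,\alpha}_{\tilde\nu}(Y)$ as elements of $L^2_{\min(\nu,\tilde\nu)}(\mathbb{R};L^2(\mathbb{P};H))$, hence the two extensions agree on the intersection. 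I expect the main technical obstacle to be the routine-but-fiddly justification that the set of simple predictable processes truncated to $\mathbb{R}_{\leq t}$ still lies in the domain and that one can approximate a given $Y$ in two weighted norms at once; the core inequality itself is an immediate consequence of the $L^2$-primitive property combined with Tonelli's theorem and is not the hard part.
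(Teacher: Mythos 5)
Your proof is correct and follows essentially the same route as the paper: the identical core estimate (the $L^{2}$-primitive bound combined with Tonelli and $\int_{s}^{\infty}e^{-2\nu t}\,\d t=\tfrac{1}{2\nu}e^{-2\nu s}$), with causality and the $\nu$-independence obtained via Lemma~\ref{lem:criteriona_for_causal} and (in substance) Lemma~\ref{lem:indep_of_evo}. Your additional details — the explicit check that $S(\alpha;L)$ is stable under multiplication by $\chi_{(-\infty,a]}$ and the direct two-weight approximation for $\nu$-independence — only spell out what the paper leaves to the cited lemmas.
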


\begin{proof} Let $Y\in S(\alpha;L)$. First we note that $\chi_{\R_{\leq t}}Y\in S(\alpha;L)$
for each $t\in\R$. Let now $\nu\geq\mu$. We estimate 
\begin{align*}
& \intop_{\R}|\mathcal{I}^{X,\alpha}(Y)(t)|_{L^{2}(\P;H)}^{2}\exp(-2\nu t)\,\d t \\ & \leq C^{2}\intop_{\R}\intop_{\R\times\Omega}|\chi_{\R_{\leq t}}(s)Y(s,\omega)|_{L}^{2}\,\d\alpha(s,\omega)\exp(-2\nu t)\,\d t\\
 & =C^{2}\intop_{\R\times\Omega}\intop_{\R}\chi_{\R_{\geq s}}(t)\exp(-2\nu t)\,\d t\,|Y(s,\omega)|_{L}^{2}\,\d\alpha(s,\omega)\\
 & =\frac{C^{2}}{2\nu}|Y|_{L_{\nu}^{2}(\alpha;L)}^{2},
\end{align*}
which shows that $\mathcal{I}^{X,\alpha}$ is evolutionary at $\mu$
and that the norm estimate holds. The causality and the independence
on the parameter $\nu$ follows from Lemma \ref{lem:criteriona_for_causal}
and Lemma \ref{lem:indep_of_evo}. \end{proof}

\begin{lem}\label{lem:int_predictable} Let $X:\R\times\Omega\to G$
be an $L^{2}$-primitive with dominating measure $\alpha$ and assume
that 
\[
\R\ni t\mapsto X_{t}(\omega)
\]
is weakly left continuous. Moreover, we assume that $X$ is $\mathcal{F}$-adapted.
Then for $\nu>0$ and $Y\in L_{\nu}^{2}(\alpha;L)$ we have that 
\[
\mathcal{I}^{X,\alpha}(Y)\in L_{\nu,\mathrm{pr}}^{2}(\R;L^{2}(\P;H)).
\]
Consequently, 
\[
\mathcal{I}^{X,\alpha}:L_{\nu}^{2}(\alpha;L)\to L_{\nu,\mathrm{pr}}^{2}(\R;L^{2}(\P;H))
\]
is a bounded linear operator. \end{lem}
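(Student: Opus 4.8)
We must show that for an $L^2$-primitive $X$ with dominating measure $\alpha$, which is $\mathcal{F}$-adapted and has weakly left continuous paths, the stochastic primitive $\mathcal{I}^{X,\alpha}(Y)$ is $\mathcal{F}$-predictable for every $Y\in L_\nu^2(\alpha;L)$ (and $\nu>0$), whence $\mathcal{I}^{X,\alpha}$ maps $L_\nu^2(\alpha;L)$ boundedly into $L_{\nu,\mathrm{pr}}^2(\mathbb{R};L^2(\mathbb{P};H))$.

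=== PROOF PROPOSAL ===

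The plan is to reduce the statement to the case of simple predictable processes and then pass to a limit, using that both predictability and the norm estimate are stable under $L^2$-limits. First I would observe that, by Proposition~\ref{prop:int_evo}, $\mathcal{I}^{X,\alpha}$ extended from $S(\alpha;L)$ is bounded from $L_\nu^2(\alpha;L)$ into $L_\nu^2(\mathbb{R};L^2(\mathbb{P};H))$, and that $S(\alpha;L)$ is dense in $L_\nu^2(\alpha;L)$; since $L_{\nu,\mathrm{pr}}^2(\mathbb{R};L^2(\mathbb{P};H))$ is a \emph{closed} subspace of $L_\nu^2(\mathbb{R};L^2(\mathbb{P};H))$, it therefore suffices to verify $\mathcal{I}^{X,\alpha}(Y)\in L_{\nu,\mathrm{pr}}^2(\mathbb{R};L^2(\mathbb{P};H))$ for $Y\in S(\alpha;L)$ a simple predictable process. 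By linearity we may further reduce to a single generator $Y = T\chi_{]s,t]\times A}$ with $s<t$, $A\in\Sigma_s$, $T\in L$.

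For such a generator one computes directly
\[
 \mathcal{I}^{X,\alpha}(Y)(r,\omega) = I^{X}(\chi_{\mathbb{R}_{\leq r}}Y)(\omega) = \chi_A(\omega)\,T\big(X_{r\wedge t}(\omega)-X_{r\wedge s}(\omega)\big)\chi_{(s,\infty)}(r),
\]
so up to the fixed bounded operator $T$ the question is whether $(r,\omega)\mapsto \chi_A(\omega)\big(X_{r\wedge t}(\omega)-X_{r\wedge s}(\omega)\big)$ is $\mathcal{B}_{\mathcal{F}}$-measurable. The process $r\mapsto X_{r\wedge t}-X_{r\wedge s}$ is $\mathcal{F}$-adapted (as a difference of stopped adapted processes, $X$ being adapted) and its paths are weakly left continuous by hypothesis; hence it is left continuous, and a left continuous adapted process is predictable — this is the standard fact that the predictable $\sigma$-algebra is generated by the left continuous adapted processes (in the separable Hilbert space setting one reduces to scalar components via a countable total subset of $H'$). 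Multiplying by $\chi_A$ with $A\in\Sigma_s$ keeps predictability since $\chi_{]s,\infty)\times A}$ is predictable. Assembling, $\mathcal{I}^{X,\alpha}(Y)$ is predictable for every generator, hence for every $Y\in S(\alpha;L)$ by taking finite linear combinations.

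The main obstacle is the measurability/predictability argument in the vector-valued setting: one must be careful that weak left continuity of the $H$-valued paths, together with adaptedness, genuinely yields $\mathcal{B}_{\mathcal{F}}$-measurability of $\mathcal{I}^{X,\alpha}(Y)$. The clean way is to fix a countable set $\{x_k\}_k\subseteq H'$ separating points of $H$, note that for each $k$ the scalar process $(r,\omega)\mapsto \langle \mathcal{I}^{X,\alpha}(Y)(r,\omega),x_k\rangle$ is left continuous and adapted, hence $\mathcal{B}_{\mathcal{F}}$-measurable by the classical scalar result, and then recover joint measurability of the $H$-valued map from the Pettis measurability theorem, since the values lie in the separable space $H$. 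Once this is established, the final assertion — that $\mathcal{I}^{X,\alpha}$ is a bounded linear operator into $L_{\nu,\mathrm{pr}}^2(\mathbb{R};L^2(\mathbb{P};H))$ — is immediate by combining the density of $S(\alpha;L)$, the boundedness from Proposition~\ref{prop:int_evo}, and closedness of the predictable subspace.
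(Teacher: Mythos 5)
Your proof is correct and follows essentially the same route as the paper: reduce by linearity, density and closedness of the predictable subspace to a single generator $T\chi_{]s,t]\times A}$, compute the primitive explicitly, and conclude predictability from adaptedness plus (weak) left continuity of the paths. The only difference is cosmetic: where you sketch the standard scalar-component/Pettis argument, the paper simply cites \cite[Proposition 3.7]{dapratozabczyk} for the fact that an adapted, weakly left continuous process is predictable.
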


\begin{proof} It suffices to prove that $\mathcal{I}^{X,\alpha}(Y)$
is predictable. Due to linearity and continuity it suffices to consider
the case $Y=\chi_{]s,t]\times A}T$ for some $s<t,A\in\Sigma_{s},T\in L$.
Then we have 
\[
\mathcal{I}^{X,\alpha}(Y)(\tau,\omega)=\chi_{]s,t]\times A}(\tau,\omega)T(X_{\tau}(\omega)-X_{s}(\omega))+\chi_{]t,\infty]\times A}(\tau,\omega)T(X_{t}(\omega)-X_{s}(\omega))
\]
for each $\tau\in\R,\omega\in\Omega$. Note that $\mathcal{I}^{X,\alpha}(Y)$
is $\mathcal{F}$-adapted and that $\tau\mapsto\mathcal{I}^{X,\alpha}(Y)(\tau,\omega)$
is weakly left continuous for each $\omega\in\Omega$. Thus, by \cite[Proposition 3.7]{dapratozabczyk}
it is $\mathcal{F}$-predictable. \end{proof}

\subsection{Solution theory for abstract stochastic evolutionary equations}\label{sec:stsee}

In the previous section, we have focused on the stochastic part of
the evolutionary equation with stochastic perturbation. We are now in the position to combine the results of the previous sections in order to provide the desired solution theory. First of all, we state the
main assumptions of this section.

\begin{ass}\label{ass:stochastics} Let $G,H$ be two separable Hilbert
spaces and $L\subseteq L(G,H)$ a subspace equipped with a Banach
norm, such that $L\hookrightarrow L(G,H)$. Moreover, let $(\Omega,\Sigma,\mathbb{P})$
be a probability space and $\mathcal{F}=(\Sigma_{t})_{t\in\R}$ a
filtration. We fix an $\mathcal{F}$-adapted process $X:\R\times\Omega\to G$,
which is an $L^{2}$-primitive with dominating measure $\alpha$ and
we assume that 
\[
t\mapsto X_{t}(\omega)
\]
is weakly left continuous for each $\omega\in\Omega$. Moreover, we
assume that $L_{0,\mathrm{pr}}^{2}(\R;L^{2}(\P))\hookrightarrow L^{2}(\alpha)$.
\end{ass}

We first recall the central observation of \cite{SW16_SD} in a slightly
different way.

\begin{theorem}[{{\cite[Theorem 3.4]{SW16_SD}}}]\label{thm:causal,adap}
Let $\nu>0$ and $M\in L(L_{\nu}^{2}(\R;H))$ be causal. Then the
canonical extension of $M$ to $L_{\nu}^{2}(\R;L^{2}(\P;H))$ given
by 
\[
(Mu)(t,\omega)\coloneqq M(u(\cdot,\omega))(t)\quad(t\in\R,\omega\in\Omega)
\]
leaves the space of predictable processes invariant, that is,
\[
M[L_{\nu,\mathrm{pr}}^{2}(\R;L^{2}(\P;H))]\subseteq L_{\nu,\mathrm{pr}}^{2}(\R;L^{2}(\P;H)).
\]
\end{theorem}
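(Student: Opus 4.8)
The plan is to reduce the statement to an approximation argument on generators of the predictable $\sigma$-algebra, exploiting causality in an essential way. First I would recall that $M$, being causal on $L_\nu^2(\mathbb{R};H)$, satisfies $Q_t M = Q_t M Q_t$ for every $t\in\mathbb{R}$ by Proposition~\ref{prop:causal}(iv), where $Q_t = \chi_{(-\infty,t]}$. The canonical extension to $L_\nu^2(\mathbb{R};L^2(\mathbb{P};H))$ clearly commutes with $Q_t$ (the cut-off acts only in time) and remains causal with the same Lipschitz constant; moreover it is a bounded linear operator on $L_\nu^2(\mathbb{R};L^2(\mathbb{P};H))$ because $\|M u\|_{L_\nu^2(\mathbb{R};L^2(\mathbb{P};H))} \le \|M\|\,\|u\|_{L_\nu^2(\mathbb{R};L^2(\mathbb{P};H))}$, which follows from Fubini applied to the defining formula $(Mu)(t,\omega) = M(u(\cdot,\omega))(t)$.

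Next I would set up the approximation. The space $L_{\nu,\mathrm{pr}}^2(\mathbb{R};L^2(\mathbb{P};H))$ is the closed linear span of elementary predictable processes of the form $u = \chi_{(s,\infty)}\,\chi_B\, h$ with $s\in\mathbb{R}$, $B\in\Sigma_s$, $h\in H$, together with processes supported in $(-\infty,s]$ for the generating rectangles $(s,t]\times B$; more precisely one works with $\chi_{(s,t]\times B}h = (\chi_{(s,\infty)} - \chi_{(t,\infty)})\chi_B h$, so it suffices to show $M$ maps each $u_s \coloneqq \chi_{(s,\infty)}\chi_B h$ (with $B\in\Sigma_s$) to a predictable process, and then invoke linearity, boundedness, and density. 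So fix such a $u_s$; the goal is to show $Mu_s$ is predictable.

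The key point is the following. Write $v \coloneqq Mu_s$. For $t\le s$ we have $Q_t u_s = 0$, hence by causality $Q_t v = Q_t M Q_t u_s = 0$, so $v(t,\cdot) = 0$ for a.e.\ $t\le s$; in particular $v\,\chi_{(-\infty,s]}$ vanishes and contributes nothing. For $t>s$, causality gives $v(t,\cdot) = (MQ_t u_s)(t,\cdot)$, and $Q_t u_s = \chi_{(s,t]}\chi_B h$ is, for fixed $t$, a process whose randomness sits entirely in the $\Sigma_s\subseteq\Sigma_t$-measurable set $B$; applying $M$ in the time variable only and evaluating at time $t$ preserves $\Sigma_t$-measurability, so $v(t,\cdot)$ is $\Sigma_t$-measurable, i.e.\ $v$ is $\mathcal{F}$-adapted. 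To upgrade adaptedness to predictability one uses that $v$ additionally has a left-continuous (in fact, as an $L^2(\mathbb{P};H)$-valued map, continuous) modification in time: indeed $t\mapsto Q_t u_s$ is continuous from $\mathbb{R}$ to $L_\nu^2(\mathbb{R};L^2(\mathbb{P};H))$, and $t\mapsto (M Q_t u_s)(t)$ is then continuous in the $L_\nu^2$-time-average sense; invoking the standard fact (cf.\ \cite[Proposition 3.7]{dapratozabczyk} as already used in the proof of Lemma~\ref{lem:int_predictable}) that an adapted process with left-continuous trajectories is predictable, one concludes. Finally, passing to general $u\in L_{\nu,\mathrm{pr}}^2(\mathbb{R};L^2(\mathbb{P};H))$ by density and using that $L_{\nu,\mathrm{pr}}^2$ is a closed subspace together with the boundedness of the extension of $M$ finishes the proof.

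The main obstacle I anticipate is making the adaptedness-to-predictability upgrade clean: one has to be slightly careful because $Mu_s$ is a priori only an element of $L_\nu^2(\mathbb{R};L^2(\mathbb{P};H))$, i.e.\ an equivalence class, so "left-continuous trajectories" must be interpreted via a suitable modification, and one must check that the modification inherited from the continuity of $t\mapsto Q_t u_s$ is genuinely adapted. An alternative, perhaps cleaner route avoiding trajectory regularity altogether is to note that $Q_t M Q_t u_s$, as a function of $t$, can be written as an $L_\nu^2$-limit of step processes $\sum_k \chi_{(t_{k-1},t_k]} Q_{t_k} v$, each of which is manifestly predictable (being left-continuous step and adapted), so that $v$ lies in the $L_\nu^2(\mathbb{R};L^2(\mathbb{P};H))$-closure of predictable processes, which is exactly $L_{\nu,\mathrm{pr}}^2(\mathbb{R};L^2(\mathbb{P};H))$; this is essentially the argument underlying \cite[Theorem 3.4]{SW16_SD}, and it is the version I would write up.
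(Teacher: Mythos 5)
Your overall skeleton is sound and is in fact the route of the cited source \cite[Theorem 3.4]{SW16_SD} (the present paper only recalls the statement and gives no proof of its own): boundedness of the canonical extension via Fubini, density of elementary predictable processes $\chi_{(s,t]}\chi_{B}h$, $B\in\Sigma_{s}$, in $L^{2}_{\nu,\mathrm{pr}}(\R;L^{2}(\P;H))$, causality in the form $Q_{t}M=Q_{t}MQ_{t}$, and closedness of $L^{2}_{\nu,\mathrm{pr}}$ at the end. The genuine gap sits exactly where you flag it: neither of your two arguments actually establishes that $v\coloneqq M\big(\chi_{(s,\infty)}\chi_{B}h\big)$ is predictable. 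By linearity of the pathwise action, $v=\chi_{B}\otimes g$ with $g\coloneqq M(\chi_{(s,\infty)}h)\in L^{2}_{\nu}(\R;H)$ a completely arbitrary $L^{2}$-function of time; it has no left-continuous (let alone continuous) modification in general, so the adaptedness-plus-left-continuity upgrade via \cite[Proposition 3.7]{dapratozabczyk} is not available, and ``continuity in the $L^{2}_{\nu}$-time-average sense'' is not a notion from which predictability follows (it would also run into $\Sigma_{t+}$ versus $\Sigma_{t}$ issues). Your ``cleaner'' alternative is circular: since $\chi_{(t_{k-1},t_k]}Q_{t_k}v=\chi_{(t_{k-1},t_k]}v$, the proposed step processes are just pieces of $v$ itself, and their predictability is precisely what is to be proved; replacing $Q_{t_k}v$ by a left-endpoint evaluation reintroduces the missing trajectory regularity.

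The fix is short and uses only what you already have. Causality gives $Q_{s}v=Q_{s}MQ_{s}u_{s}=0$, so $v=0$ a.e.\ on $(-\infty,s]\times\Omega$ and hence $v=\chi_{(s,\infty)}\chi_{B}\,g$ up to a null set. Such a function is $\mathcal{B}_{\mathcal{F}}$-measurable without any continuity: for fixed $B\in\Sigma_{s}$, the collection of Borel sets $E\subseteq(s,\infty)$ with $E\times B\in\mathcal{B}_{\mathcal{F}}$ is a $\sigma$-algebra containing every interval $(s',t']\subseteq(s,\infty)$ (note $B\in\Sigma_{s}\subseteq\Sigma_{s'}$), hence all Borel subsets of $(s,\infty)$; approximating the strongly measurable $\chi_{(s,\infty)}g$ pointwise a.e.\ by $H$-valued simple Borel functions supported in $(s,\infty)$ then exhibits this representative of $v$ as an a.e.\ limit of predictable processes, so $v\in L^{2}_{\nu,\mathrm{pr}}(\R;L^{2}(\P;H))$. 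With this replacement of your adaptedness/left-continuity step, your density-and-closedness conclusion goes through exactly as you wrote it.
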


Before we can come to our main well-posedness result, we need the
following lemma.

\begin{lem}\label{lem:sigma} Let $\mu>0$, $\sigma:H\to L$ be Lipschitz continuous, $\sigma(0)=0$,
and define 
\[
\tilde{\sigma}:\bigcap_{\nu\geq\mu}L_{\nu}^{2}(\R;L^{2}(\P;H))\to\bigcap_{\nu\geq\mu}L_{\nu}^{2}(\R;L^{2}(\P;L))
\]
by 
\[
(\tilde{\sigma}u)(t,\omega)\coloneqq\sigma(u(t,\omega)).
\]
Then $\tilde{\sigma}$ is evolutionary at $\mu$, $\tilde{\sigma}_{\nu}$
is causal and does not depend on the parameter $\nu$. Moreover, for
$\nu\geq\mu$ we have that the restriction 
\[
\tilde{\sigma}_{\nu}:L_{\nu,\mathrm{pr}}^{2}(\R;L^{2}(\P;H))\to L_{\nu}^{2}(\alpha;L)
\]
is well-defined and Lipschitz continuous, where the smallest Lipschitz constant
can be chosen independent of $\nu$. \end{lem}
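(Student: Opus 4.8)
The plan is to verify the three properties in turn, with the main effort spent on the claim that the eventual Lipschitz constant of $\tilde\sigma$ can be chosen independently of $\nu$. First, I would check that $\tilde\sigma$ is well-defined as a map between the stated scales: given $u\in L_\nu^2(\R;L^2(\P;H))$, the pointwise estimate $|\sigma(u(t,\omega))|_L = |\sigma(u(t,\omega))-\sigma(0)|_L \le \|\sigma\|_{\mathrm{Lip}}\,|u(t,\omega)|_H$ together with Fubini immediately gives $\tilde\sigma u\in L_\nu^2(\R;L^2(\P;L))$, with $\|\tilde\sigma u\|_{L_\nu^2(\R;L^2(\P;L))}\le \|\sigma\|_{\mathrm{Lip}}\|u\|_{L_\nu^2(\R;L^2(\P;H))}$; the same estimate applied to $\sigma(u)-\sigma(v)$ shows $\tilde\sigma_{0,\nu}$ is Lipschitz with constant $\le\|\sigma\|_{\mathrm{Lip}}$ for every $\nu\ge\mu$, whence $\|\tilde\sigma\|_{\mathrm{ev},\mathrm{Lip}}\le\|\sigma\|_{\mathrm{Lip}}<\infty$ and $\tilde\sigma$ is evolutionary at $\mu$. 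Density of the domain is not an issue here since $\tilde\sigma$ is defined on all of the intersection scale.

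Next I would address causality and independence of $\nu$ by invoking Lemma~\ref{lem:criteriona_for_causal} and Lemma~\ref{lem:indep_of_evo}. The domain condition required by both lemmas — that $\dom(F)\cap\dom(F\chi_{(-\infty,a]})$ be dense in $\dom(F)$ — is trivially satisfied because $\dom(\tilde\sigma)$ is the whole space and $\chi_{(-\infty,a]}u$ again lies in every $L_\mu^2$; so both lemmas apply directly and yield that $\tilde\sigma_\nu$ is causal and that $\tilde\sigma_\nu$, $\tilde\sigma_{\tilde\nu}$ agree on $L_\nu^2\cap L_{\tilde\nu}^2$. (Alternatively, causality is immediate from the pointwise-in-$(t,\omega)$ nature of the Nemytskii map: $\chi_{(-\infty,t]}\tilde\sigma u$ depends only on $\chi_{(-\infty,t]}u$.)

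The substantive point is the last assertion: that for $\nu\ge\mu$ the restriction $\tilde\sigma_\nu\colon L_{\nu,\mathrm{pr}}^2(\R;L^2(\P;H))\to L_\nu^2(\alpha;L)$ is well-defined and Lipschitz with a constant not depending on $\nu$. Here the key inputs are that $\sigma$ is a (measurable, pointwise-defined) Nemytskii map, so that for predictable $u$ the process $(t,\omega)\mapsto\sigma(u(t,\omega))$ is again $\caF$-predictable and $L$-valued; and the embedding hypothesis in Assumption~\ref{ass:stochastics}, namely $L_{0,\mathrm{pr}}^2(\R;L^2(\P))\hookrightarrow L^2(\alpha)$, which transfers to the weighted spaces. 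Concretely, writing $f(t,\omega):=|\tilde\sigma u(t,\omega)-\tilde\sigma v(t,\omega)|_L \le \|\sigma\|_{\mathrm{Lip}}|u(t,\omega)-v(t,\omega)|_H$, I would note that $t\mapsto f(t,\cdot)e^{-\nu t}$ is a nonnegative predictable element of $L_{0,\mathrm{pr}}^2(\R;L^2(\P))$ and apply the embedding constant $c_{\hookrightarrow}$ from Assumption~\ref{ass:stochastics} to get
\begin{align*}
\|\tilde\sigma u-\tilde\sigma v\|_{L_\nu^2(\alpha;L)}^2 &= \int_{\R\times\Omega} |\tilde\sigma u-\tilde\sigma v|_L^2 e^{-2\nu t}\,\d\alpha \\
&\le c_{\hookrightarrow}^2\,\|\sigma\|_{\mathrm{Lip}}^2\,\|u-v\|_{L_\nu^2(\R;L^2(\P;H))}^2,
\end{align*}
so the Lipschitz constant $c_{\hookrightarrow}\|\sigma\|_{\mathrm{Lip}}$ is manifestly $\nu$-free; taking $v=0$ gives well-definedness. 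The one thing to be careful about is that the embedding $L_{0,\mathrm{pr}}^2(\R;L^2(\P))\hookrightarrow L^2(\alpha)$ is stated for the unweighted ($\nu=0$) space, so the argument must be run by first extracting the weight $e^{-2\nu t}$ into the function, applying the embedding to the resulting $\nu$-independent predictable scalar process, and then reinserting the weight on the $L^2(\alpha)$ side — this is exactly where the $\nu$-independence of the constant comes from, and it is the only genuinely delicate bookkeeping in the proof.
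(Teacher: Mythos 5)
Your argument is correct and follows essentially the same route as the paper's own proof: evolutionarity from the pointwise Lipschitz estimate together with $\sigma(0)=0$, causality and $\nu$-independence by invoking Lemma~\ref{lem:criteriona_for_causal} and Lemma~\ref{lem:indep_of_evo} (the domain condition being trivial), and the final assertion from predictability preservation combined with the embedding $L_{0,\mathrm{pr}}^{2}(\R;L^{2}(\P))\hookrightarrow L^{2}(\alpha)$ of Assumption~\ref{ass:stochastics}. The only minor variation is that the paper establishes predictability of $\tilde{\sigma}(u)$ by approximating $u$ with simple predictable processes and using closedness of the predictable subspace, whereas you argue directly via measurability of the composition with the continuous map $\sigma$, and you spell out the weight-shifting bookkeeping behind the $\nu$-independence of the embedding constant, which the paper leaves implicit.
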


\begin{proof} It is obvious, that $\tilde{\sigma}$ is again Lipschitz continuous
with the same Lipschitz constant as $\sigma$. Hence, it is evolutionary
at $\mu$. The causality and independence of the parameter follows
by Lemma \ref{lem:indep_of_evo} and Lemma \ref{lem:criteriona_for_causal}.
We now come to the second assertion. We first show that $\tilde{\sigma}(u)$
is predictable if $u$ is predictable. Note that by continuity it
suffices to prove this for $u=\sum_{i=1}^{n}\chi_{]s_{i},t_{i}]\times A_{i}}x_{i}$
for $s_{i}<t_{i},A_{i}\in\Sigma_{s_{i}},x_{i}\in H$, $i\in\{1,\ldots,n\}$. We may assume
without loss of generality that the intervals $]s_{i},t_{i}]$ are
pairwise disjoint. Then 
\[
\tilde{\sigma}(u)=\sum_{i=1}^{n}\chi_{]s_{i},t_{i}]\times A_{i}}\sigma(x_{i})
\]
and hence, $\tilde{\sigma}(u)$ is predictable. From $L_{0,\mathrm{pr}}^{2}(\R;L^{2}(\P))\hookrightarrow L^{2}(\alpha)$
we infer \[L_{\nu,\mathrm{pr}}^{2}\big(\R;L^{2}(\P;L)\big)\hookrightarrow L_{\nu}^{2}(\alpha;L)\]
for each $\nu\geq0$ where the embedding constant is independent of
$\nu$. Thus, as we have shown that 
\[
\tilde{\sigma}_{\nu}:L_{\nu,\mathrm{pr}}^{2}(\R;L^{2}(\P;H))\to L_{\nu,\mathrm{pr}}^{2}(\R;L^{2}(\P;L))
\]
is well-defined and Lipschitz continuous with a Lipschitz constant
independent of $\nu$, the assertion follows. \end{proof}

\begin{cor}\label{cor:inv_evo} Let $\mu>0$, $\sigma:H\to L$ be
Lipschitz continuous, $\sigma(0)=0$. Then 
\[
\mathcal{I}^{X,\alpha}\circ\tilde{\sigma}:\bigcap_{\nu\geq\mu}L_{\nu,\mathrm{pr}}^{2}(\R;L^{2}(\P;H))\subseteq\bigcap_{\nu\geq\mu}L_{\nu}^{2}(\R;L^{2}(\P;H))\to\bigcap_{\nu\geq\mu}L_{\nu}^{2}(\R;L^{2}(\P;H))
\]
is invariant evolutionary and $\|\mathcal{I}^{X,\alpha}\circ\tilde{\sigma}\|_{\mathrm{ev,Lip}}=0$.
\end{cor}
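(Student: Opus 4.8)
The plan is to obtain \Cref{cor:inv_evo} as a straightforward composition of the two ingredients already established, namely \Cref{prop:int_evo} (giving that $\mathcal{I}^{X,\alpha}$ is evolutionary at $\mu$, densely defined, causal, $\mu$-independent, with $\|\mathcal{I}^{X,\alpha}_\nu\|\le C/\sqrt{2\nu}$) and \Cref{lem:sigma} (giving that $\tilde\sigma$ is evolutionary at $\mu$, causal, $\mu$-independent, and that $\tilde\sigma_\nu\colon L^2_{\nu,\mathrm{pr}}(\R;L^2(\P;H))\to L^2_\nu(\alpha;L)$ is Lipschitz with a Lipschitz constant, say $C_\sigma$, not depending on $\nu$). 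The domain of the composition is $\bigcap_{\nu\ge\mu} L^2_{\nu,\mathrm{pr}}(\R;L^2(\P;H))$, which is a vector space, so the composition is well-defined as a mapping between the stated intersections.

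First I would check Lipschitz continuity on each $L^2_\nu$. For $u,v$ in the domain, $\|\mathcal{I}^{X,\alpha}(\tilde\sigma u)-\mathcal{I}^{X,\alpha}(\tilde\sigma v)\|_{L^2_\nu(\R;L^2(\P;H))}\le \|\mathcal{I}^{X,\alpha}_\nu\|\,\|\tilde\sigma_\nu u-\tilde\sigma_\nu v\|_{L^2_\nu(\alpha;L)}\le \tfrac{C}{\sqrt{2\nu}}\,C_\sigma\,\|u-v\|_{L^2_\nu(\R;L^2(\P;H))}$, using that $\tilde\sigma_\nu$ maps into $L^2_\nu(\alpha;L)$ by \Cref{lem:sigma}. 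This shows $(\mathcal{I}^{X,\alpha}\circ\tilde\sigma)_{0,\nu}$ is Lipschitz for every $\nu\ge\mu$, hence the composition is evolutionary at $\mu$, and the bound on the Lipschitz constant of the closure $(\mathcal{I}^{X,\alpha}\circ\tilde\sigma)_\nu$ is $\le CC_\sigma/\sqrt{2\nu}\to 0$ as $\nu\to\infty$, which is exactly $\|\mathcal{I}^{X,\alpha}\circ\tilde\sigma\|_{\mathrm{ev,Lip}}=0$.

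Next I would verify the \emph{invariance} property: $(\mathcal{I}^{X,\alpha}\circ\tilde\sigma)_\nu$ must leave $\overline{\dom(\mathcal{I}^{X,\alpha}\circ\tilde\sigma)}^{L^2_\nu}=L^2_{\nu,\mathrm{pr}}(\R;L^2(\P;H))$ invariant. This is where I would invoke \Cref{lem:int_predictable} (valid since by \Cref{ass:stochastics} the process $X$ is $\mathcal{F}$-adapted and weakly left continuous), which gives $\mathcal{I}^{X,\alpha}\colon L^2_\nu(\alpha;L)\to L^2_{\nu,\mathrm{pr}}(\R;L^2(\P;H))$; combined with $\tilde\sigma_\nu\colon L^2_{\nu,\mathrm{pr}}(\R;L^2(\P;H))\to L^2_\nu(\alpha;L)$ from \Cref{lem:sigma}, the composition indeed maps the predictable space into itself. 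Finally, causality of $(\mathcal{I}^{X,\alpha}\circ\tilde\sigma)_\nu$ follows because composition of causal Lipschitz maps is causal (as used already in the proof of \Cref{thm:cmp}), and $\mu$-independence follows from that of each factor together with \Cref{lem:indep_of_evo}. The only slightly delicate point is keeping track of which intersection each arrow lands in — the composition genuinely uses that $\tilde\sigma$ produces \emph{predictable} $L$-valued processes that sit in $L^2_\nu(\alpha;L)$, not merely in $L^2_\nu(\R;L^2(\P;L))$ — but since \Cref{lem:sigma} was stated precisely in that form, there is no real obstacle.
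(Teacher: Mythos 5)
Your proposal is correct and follows essentially the same route as the paper: evolutionarity and $\|\mathcal{I}^{X,\alpha}\circ\tilde{\sigma}\|_{\mathrm{ev,Lip}}=0$ from Proposition \ref{prop:int_evo} combined with Lemma \ref{lem:sigma} (your explicit bound $C C_\sigma/\sqrt{2\nu}\to 0$ just spells out what the paper leaves implicit), and invariance of the predictable subspace from Lemma \ref{lem:int_predictable} together with Lemma \ref{lem:sigma}. The additional remarks on causality and $\nu$-independence are not needed for the corollary's statement but are harmless.
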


\begin{proof} By Proposition \ref{prop:int_evo} and Lemma \ref{lem:sigma}
we infer that $\mathcal{I}^{X,\alpha}\circ\tilde{\sigma}$ is evolutionary
with $\|\mathcal{I}^{X,\alpha}\circ\tilde{\sigma}\|_{\mathrm{ev,Lip}}=0$.
The invariance follows by Lemma \ref{lem:int_predictable} and Lemma
\ref{lem:sigma}. \end{proof}

We are now in the position to formulate the abstract solution theory,
which is based on Theorem \ref{thm:cmp}.

\begin{theorem}\label{thm:linst} Impose Assumption \ref{ass:1} and
let $\sigma\colon H\to L$ be Lipschitz continuous, $\sigma(0)=0$. Then there exists
$\nu>0$ such that for all $f\in L_{\nu,\mathrm{pr}}^{2}(\mathbb{R};L^{2}(\P;H))$
there exists a unique $u_{f}\in L_{\nu,\mathrm{pr}}^{2}(\mathbb{R};L^{2}(\P;H))$
such that 
\[
\overline{\left(\partial_{0,\nu}\mathcal{M}+\mathcal{N}+A\right)}u_{f}=f+\mathcal{I}_{\nu}^{X,\alpha}\circ\tilde{\sigma}(u_{f}).
\]
The mapping $f\mapsto u_{f}$ is causal and does not depend on $\nu$
in the sense of Lemma \ref{lem:indep_of_evo}. \end{theorem}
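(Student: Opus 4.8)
The plan is to combine the deterministic solution operator from Theorem~\ref{thm:st} with the properties of the stochastic integral operator established in Corollary~\ref{cor:inv_evo}, and then apply the abstract fixed point result Theorem~\ref{thm:cmp}. Concretely, set $S \coloneqq \overline{\left(\partial_{0,\nu}\mathcal{M}+\mathcal{N}+A\right)}^{-1}$, which by Theorem~\ref{thm:st} is densely defined, causal, and evolutionary at $\nu$, and set $F \coloneqq \mathcal{I}^{X,\alpha}\circ\tilde{\sigma}$, which by Corollary~\ref{cor:inv_evo} is invariant evolutionary at $\nu$ with eventual Lipschitz constant $\|F\|_{\mathrm{ev},\mathrm{Lip}}=0$. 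The desired equation $\overline{\left(\partial_{0,\nu}\mathcal{M}+\mathcal{N}+A\right)}u_f = f + F(u_f)$ is, upon applying $S$ to both sides, exactly the fixed point problem \eqref{eq:pert_prob}: $u_f - S_\mu(F_\mu(u_f)) = S_\mu(f)$.

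First I would verify the hypotheses of Theorem~\ref{thm:cmp}. The condition $\|S\|_{\mathrm{ev},\mathrm{Lip}}\|F\|_{\mathrm{ev},\mathrm{Lip}}<1$ is immediate since $\|F\|_{\mathrm{ev},\mathrm{Lip}}=0$; indeed this makes the product zero and the contraction constant can be made arbitrarily small by taking $\mu$ large, using the estimate $\|\mathcal{I}_\nu^{X,\alpha}\|\leq C/\sqrt{2\nu}$ from Proposition~\ref{prop:int_evo}. The remaining structural hypothesis is that $S_\mu[\dom(F_\mu)]\subseteq\dom(F_\mu)$ for all $\mu\geq\nu$, where $\dom(F_\mu)=\overline{\dom(F)}^{L_\mu^2}=L_{\mu,\mathrm{pr}}^2(\mathbb{R};L^2(\mathbb{P};H))$ (the closure of the simple predictable processes). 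This is precisely where causality of $S$ enters: by Theorem~\ref{thm:causal,adap}, a causal bounded operator on $L_\mu^2(\mathbb{R};H)$ — canonically extended — maps predictable processes to predictable processes, so $S_\mu$ leaves $L_{\mu,\mathrm{pr}}^2(\mathbb{R};L^2(\mathbb{P};H))$ invariant. Thus Theorem~\ref{thm:cmp} applies and yields, for $\nu$ large enough, a unique fixed point $u_f\in\dom(F_\nu)=L_{\nu,\mathrm{pr}}^2(\mathbb{R};L^2(\mathbb{P};H))$, together with causality of $f\mapsto u_f$ (composition of causal maps is causal).

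For the $\nu$-independence claim, I would invoke the last statement of Theorem~\ref{thm:cmp}, which requires that $\dom(F)\cap\dom(F\chi_{(-\infty,a]})$ be dense in $\dom(F)$ in the $L_\mu^2$-norm for all large $\mu$. Since $\dom(F)=S(\alpha;L)$-images under $\tilde{\sigma}$, i.e. effectively the simple predictable processes, truncation by $\chi_{(-\infty,a]}$ maps simple predictable processes to simple predictable processes and the truncations converge as $a\to\infty$; hence the density condition holds and Lemma~\ref{lem:indep_of_evo} gives the independence. The main obstacle I anticipate is bookkeeping rather than conceptual: one must be careful that the canonical extension of the deterministic operator $S$ to the $L^2(\mathbb{P})$-valued setting (as in Theorem~\ref{thm:causal,adap}) is compatible with the stochastic integral operator $F$ acting on the same space, and that ``causal'' is meant with respect to the standard resolution of the identity $R_t=Q_t=\chi_{(-\infty,t]}$ in both the deterministic and the probabilistic tensor factor — so that the composition $S_\mu\circ F_\mu$ is genuinely causal on $L_{\mu,\mathrm{pr}}^2(\mathbb{R};L^2(\mathbb{P};H))$ and the fixed point iteration stays within the predictable processes. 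Once these identifications are made explicit, the proof is a direct application of Theorem~\ref{thm:cmp}.
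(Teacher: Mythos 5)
Your proposal is correct and follows essentially the same route as the paper: it identifies $S=\overline{\left(\partial_{0,\nu}\mathcal{M}+\mathcal{N}+A\right)}^{-1}$ via Theorem~\ref{thm:st}, $F=\mathcal{I}^{X,\alpha}\circ\tilde{\sigma}$ via Corollary~\ref{cor:inv_evo} with $\|F\|_{\mathrm{ev},\mathrm{Lip}}=0$, uses Theorem~\ref{thm:causal,adap} to get $S_{\mu}[\dom(F_{\mu})]\subseteq\dom(F_{\mu})$, and concludes with Theorem~\ref{thm:cmp} together with Lemma~\ref{lem:indep_of_evo} for causality and $\nu$-independence. The extra bookkeeping you flag (predictability of the iterates, independence of the parameter) is exactly what the cited auxiliary results are designed to supply, so no gap remains.
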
 \begin{proof}
We use Theorem \ref{thm:cmp}. For this, we observe that $S\coloneqq\left(\partial_{0,\nu}\mathcal{M}+\mathcal{N}+A\right)^{-1}$
is evolutionary, causal and densely defined, by Theorem \ref{thm:st}.
Moreover, since $S$ is causal, we have that $S_{\mu}[\dom(F_{\mu})]\subseteq\dom(F_{\mu})$
for all $\mu\geq\nu$ with $F\coloneqq\mathcal{I}^{X,\alpha}\circ\tilde{\sigma}$,
by Theorem \ref{thm:causal,adap}. Moreover, by Corollary \ref{cor:inv_evo},
we deduce that $F$ is invariant evolutionary and $\|F\|_{\textnormal{ev},\textnormal{Lip}}=0$.
Thus Theorem \ref{thm:cmp} is applicable and we obtain the assertion.
\end{proof}

Also in the non-linear setting, we obtain an analogous result with
exactly the same proof, where we use Theorem \ref{thm:nonlinear}
instead of Theorem \ref{thm:st}.

\begin{theorem}\label{thm:wp_stoch_nonlin} Let $M,N:\mathbb{R}\to L(H)$
weakly measurable and $M$ Lipschitz continuous. Let $A\subseteq H\otimes H$
maximal monotone with $(0,0)\in A$ and $\sigma:H\to L$ Lipschitz
continuous, $\sigma(0)=0$. Moreover, assume that $K\coloneqq\kar(M(t))=\kar(M(0))$
for all $t\in\mathbb{R}$ and that there exists $c>0$ such that for
all $t\in\mathbb{R}$ 
\[
\langle M(t)\phi,\phi\rangle\geq c\langle\phi,\phi\rangle,\ \Re\langle N(t)\psi,\psi\rangle\geq c\langle\psi,\psi\rangle
\]
for all $\phi\in K$ and $\psi\in K^{\bot}$. Then there exists $\nu>0$
such that for all $f\in L_{\nu,\mathrm{pr}}^{2}(\mathbb{R};L^{2}(\P;H))$
there exists a unique $u_{f}\in L_{\nu,\mathrm{pr}}^{2}(\mathbb{R};L^{2}(\P;H))$
such that 
\[
\overline{\left(\partial_{0,\nu}\mathcal{M}+\mathcal{N}+A\right)}\ni(u_{f},f+\mathcal{I}_{\nu}^{X,\alpha}\circ\tilde{\sigma}(u_{f})).
\]
The mapping $f\mapsto u_{f}$ is causal and does not depend on $\nu$
in the sense of Lemma \ref{lem:indep_of_evo}. \end{theorem}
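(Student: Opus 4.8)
The statement is the nonlinear analogue of Theorem~\ref{thm:linst}, and the proof follows the identical strategy: we invoke the abstract fixed-point result Theorem~\ref{thm:cmp} with the deterministic solution operator playing the role of $S$ and the stochastic-integral operator playing the role of $F$. The only change is that the deterministic ingredient is now supplied by Theorem~\ref{thm:nonlinear} instead of Theorem~\ref{thm:st}.

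First I would set $S\coloneqq\overline{\left(\partial_{0,\nu}\mathcal{M}+\mathcal{N}+A\right)}^{-1}$, which by Theorem~\ref{thm:nonlinear} (whose hypotheses are exactly the ones imposed on $M,N,A$ here) exists for $\nu>0$ sufficiently large as a Lipschitz continuous, causal mapping $L^2_\nu(\mathbb{R};H)\to L^2_\nu(\mathbb{R};H)$ with a Lipschitz constant $\|S_\mu\|_{\textnormal{Lip}}\leq C$ bounded independently of $\mu\geq\nu$, and which is independent of $\mu$. In particular $\|S\|_{\textnormal{ev},\textnormal{Lip}}\leq C<\infty$, so $S\in L_{\textnormal{ev},\nu}(H)$; moreover $S_\mu|_{\mathring{C}_\infty(\mathbb{R};H)}$ is densely defined, so $S$ is densely defined in the sense of Definition~\ref{def:evolutionary}. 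Next set $F\coloneqq\mathcal{I}^{X,\alpha}\circ\tilde{\sigma}$. By Corollary~\ref{cor:inv_evo}, $F$ is invariant evolutionary at $\nu$ with eventual Lipschitz constant $\|F\|_{\textnormal{ev},\textnormal{Lip}}=0$, and its invariant domains are the closures of the predictable processes, i.e.\ $\dom(F_\mu)=L^2_{\mu,\mathrm{pr}}(\mathbb{R};L^2(\mathbb{P};H))$.

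The two remaining hypotheses of Theorem~\ref{thm:cmp} are then immediate. The product $\|S\|_{\textnormal{ev},\textnormal{Lip}}\|F\|_{\textnormal{ev},\textnormal{Lip}}=C\cdot 0=0<1$, so the strict-contraction condition holds for every $\mu\geq\nu$. The invariance condition $S_\mu[\dom(F_\mu)]\subseteq\dom(F_\mu)$ — that is, $S_\mu$ maps predictable processes to predictable processes — follows from causality of $S_\mu$ together with Theorem~\ref{thm:causal,adap}, applied to the canonical extension of the (here, nonlinear but Lipschitz and causal) solution operator; one verifies that the proof of Theorem~\ref{thm:causal,adap} does not use linearity of $M$, only causality and Lipschitz continuity, so it applies verbatim. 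Invoking Theorem~\ref{thm:cmp} now yields, for $\nu$ large enough, a unique fixed point $u_f\in\dom(F_\nu)=L^2_{\nu,\mathrm{pr}}(\mathbb{R};L^2(\mathbb{P};H))$ of $u\mapsto S_\nu f+S_\nu(F_\nu(u))$ for each $f\in L^2_{\nu,\mathrm{pr}}(\mathbb{R};L^2(\mathbb{P};H))$; rewriting $u_f=S_\nu\big(f+\mathcal{I}^{X,\alpha}_\nu\circ\tilde{\sigma}(u_f)\big)$ as $\overline{\left(\partial_{0,\nu}\mathcal{M}+\mathcal{N}+A\right)}\ni\big(u_f,\,f+\mathcal{I}^{X,\alpha}_\nu\circ\tilde{\sigma}(u_f)\big)$ gives the asserted inclusion. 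Causality of $f\mapsto u_f$ follows from Theorem~\ref{thm:cmp} since both $S$ and $F$ are causal (by Theorem~\ref{thm:nonlinear} and Corollary~\ref{cor:inv_evo} respectively), and independence of $\nu$ follows since both $S$ and $F$ are $\nu$-independent and the domain-density hypothesis of Lemma~\ref{lem:indep_of_evo} holds for $F=\mathcal{I}^{X,\alpha}\circ\tilde{\sigma}$ (indeed simple predictable processes with temporal support cut off at any $a$ are still admissible integrands, which is the density used in Lemma~\ref{lem:criteriona_for_causal}).

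The only genuine point requiring care — the ``main obstacle'', such as it is — is checking that Theorem~\ref{thm:causal,adap} really does transfer to the nonlinear setting, since as literally stated it concerns a bounded linear causal operator $M\in L(L^2_\nu(\mathbb{R};H))$. The resolution is that the predictability-preservation argument there is purely measure-theoretic: it shows that cutting the time variable at rational levels commutes with $M$ up to the resolution of the identity, which is exactly causality in the form of Proposition~\ref{prop:causal}(iv), and this makes no use of additivity or homogeneity. Hence the canonical extension of any causal Lipschitz map on $L^2_\nu(\mathbb{R};H)$ to $L^2_\nu(\mathbb{R};L^2(\mathbb{P};H))$ preserves predictability, which is all that is needed. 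Everything else is a direct citation of the results already assembled.
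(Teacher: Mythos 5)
Your proposal is correct and follows exactly the route the paper takes: the paper proves Theorem~\ref{thm:wp_stoch_nonlin} by declaring it has ``exactly the same proof'' as Theorem~\ref{thm:linst}, i.e.\ applying Theorem~\ref{thm:cmp} with $S$ supplied by Theorem~\ref{thm:nonlinear} instead of Theorem~\ref{thm:st} and $F=\mathcal{I}^{X,\alpha}\circ\tilde{\sigma}$ handled via Corollary~\ref{cor:inv_evo} and Theorem~\ref{thm:causal,adap}. Your explicit check that the predictability-preservation argument of Theorem~\ref{thm:causal,adap} does not use linearity, only causality and Lipschitz continuity of the (now nonlinear) solution operator, is a point the paper leaves implicit, and it is handled correctly.
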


\subsection{An abstract stochastic heat/wave equation}\label{sec:wave}

In this section, we treat an abstract example of an equation of mixed
type. For this let $H_{1},H_{2}$ be separable Hilbert spaces, $C\colon\dom(C)\subseteq H_{1}\to H_{2}$
be closed and densely defined. We assume Assumption \ref{ass:stochastics} with
$H$ replaced by $H_{1}$. Let $a\colon\mathbb{R}\to L(H_{2})$ be
bounded and Lipschitz continuous satisfying $a(t)=a(t)^{*}\geq c$
for all $t\in\mathbb{R}$ and some $c>0$; we denote by $a(\mathrm{m})$
the abstract multiplication operator realized as an operator from
$L_{\nu}^{2}(\mathbb{R};H_{2})$ to $L_{\nu}^{2}(\mathbb{R};H_{2})$
for all $\nu>0$. Moreover, let $\sigma:H_{1}\to L$ be Lipschitz continuous with $\sigma(0)=0$
and $P=P^{\ast}=P^{2}\in L(H_{1})$.

The problem we are about to study with regards to well-posedness issues
reads as follows. Let $f\in L_{\nu,\mathrm{pr}}^{2}(\R;L^{2}(\P;H_{1}))$
be given. Then (write $\partial_0$ for the time derivatve) consider the equation 
\begin{equation}
\partial_{0}^{2}Pu+\partial_{0}(1-P)u+C^{*}a(\mathrm{m})Cu=f+\partial_{0}\left(\mathcal{I}^{X,\alpha}\circ\tilde{\sigma}\right)(u).\label{eq:wave0}
\end{equation}
Note that for the special cases $P=1_{H_{1}}$ and $P=0$, we recover
the respective special cases of an abstract wave equation and an abstract
heat equation.

In applications, for instance if $X$ is a Wiener process, the expression
$\partial_{0}\mathcal{I}^{X,\alpha}\circ\tilde{\sigma}(u)$ is often
written as $\sigma(u(t))\dot{W}(t)$. Then $\int_{0}^{t}\sigma(u(t))\dot{W}(t)dt$
is interpreted as stochastic integral. Here, we employ the same rationale
since $\partial_{0}^{-1}$ is integration (see Section \ref{sec:tdem})
so that $\partial_{0}^{-1}\partial_{0}\mathcal{I}^{X,\alpha}\circ\tilde{\sigma}=\mathcal{I}^{X,\alpha}\circ\tilde{\sigma}$.

In order to apply the solution theory outlined in the previous section,
we shall reformulate \eqref{eq:wave0}. Denote $q\coloneqq\partial_{0}^{-1}a(\mathrm{m})Cu$.
Thus, \eqref{eq:wave0} reads 
\begin{multline}
\left(\partial_{0}\begin{pmatrix}P & 0\\
0 & b(\mathrm{m})
\end{pmatrix}+\begin{pmatrix}(1-P) & 0\\
0 & -b'(\mathrm{m})
\end{pmatrix}+\begin{pmatrix}0 & C^{*}\\
-C & 0
\end{pmatrix}\right)\begin{pmatrix}u\\
q
\end{pmatrix} \\ =\begin{pmatrix}\partial_{0}^{-1}f+\mathcal{I}^{X,\alpha}\circ\tilde{\sigma}(u)\\
0
\end{pmatrix},\label{eq:wave1sto}
\end{multline}
where $b(t)\coloneqq a(t)^{-1}$ for all $t\in\mathbb{R}$ and $b'$
is the weak derivative of $b$.

Identifying $x\in H_{1}$ with $x\oplus0\in H_{1}\oplus H_{2}$ we
realize that Theorem \ref{thm:linst} applies once we have shown that
Assumption \ref{ass:1} is satisfied for the following setting 
\begin{align*}
 & H=H_{1}\oplus H_{2},\mathcal{M}=\begin{pmatrix}P & 0\\
0 & b(\mathrm{m})
\end{pmatrix},\mathcal{N}=\begin{pmatrix}(1-P) & 0\\
0 & -b'(\mathrm{m})
\end{pmatrix},\\ & A=\begin{pmatrix}0 & C^{*}\\
-C & 0
\end{pmatrix}, \mathcal{M}'=\begin{pmatrix}0 & 0\\
0 & b'(\mathrm{m})
\end{pmatrix},
\end{align*}
and a suitably chosen $\nu>0$. The rest of this section is devoted
to verify the conditions in Assumption \ref{ass:1}.

\begin{lem}\label{lem:Asks} The operator $A$ is skew-selfadjoint.
In particular, $\Re\langle Ax,x\rangle=0$ for all $x\in\dom(A)=\dom(A^{*})$
and $\ran(A\pm1)=H$, so that $A$ is m-accretive. \end{lem}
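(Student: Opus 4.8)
The plan is to unpack the block operator $A=\begin{pmatrix}0 & C^{*}\\ -C & 0\end{pmatrix}$ on $H=H_{1}\oplus H_{2}$ with natural domain $\dom(A)=\dom(C)\oplus\dom(C^{*})$, and to compute its adjoint directly. Since $C$ is closed and densely defined, so is $C^{*}$, and hence $A$ is densely defined. The key observation is that $-A=\begin{pmatrix}0 & -C^{*}\\ C & 0\end{pmatrix}$, so it suffices to show $A^{*}=-A$; the inclusion $-A\subseteq A^{*}$ is immediate from integration by parts (i.e.\ from the defining relations of the adjoint of $C$), so the substantive point is the reverse inclusion $\dom(A^{*})\subseteq\dom(A)$.

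First I would take $(x,y)\in\dom(A^{*})$ with $A^{*}(x,y)=(u,v)$, meaning that for all $(\phi,\psi)\in\dom(C)\oplus\dom(C^{*})$ one has $\langle C^{*}\psi,x\rangle-\langle C\phi,y\rangle=\langle\phi,u\rangle+\langle\psi,v\rangle$. Choosing $\psi=0$ gives $-\langle C\phi,y\rangle=\langle\phi,u\rangle$ for all $\phi\in\dom(C)$, which says precisely that $y\in\dom(C^{*})$ with $C^{*}y=-u$; symmetrically, choosing $\phi=0$ gives $x\in\dom((C^{*})^{*})=\dom(\overline{C})=\dom(C)$ (here one uses that $C$ is closed, so $C^{**}=C$) with $Cx=v$. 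Hence $(x,y)\in\dom(A)$ and $A(x,y)=(C^{*}y,-Cx)=(-u,-v)=-A^{*}(x,y)$, establishing $A^{*}=-A$, i.e.\ $A$ is skew-selfadjoint.

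The remaining assertions are then quick consequences. Skew-selfadjointness gives $\dom(A)=\dom(A^{*})$ and, for $x\in\dom(A)$, $\Re\langle Ax,x\rangle=\tfrac12(\langle Ax,x\rangle+\langle x,Ax\rangle)=\tfrac12(\langle Ax,x\rangle-\langle A^{*}x,x\rangle)=0$, so $A$ is accretive; the same computation applied to $A^{*}=-A$ shows $A^{*}$ is accretive. For m-accretivity one needs $\ran(1+A)=H$ (equivalently $\ran(A\pm1)=H$): since $A$ is closed (being an adjoint) and both $A$ and $A^{*}$ are accretive, $1+A$ is injective with closed range, and $\ran(1+A)^{\perp}=\ker(1+A^{*})=\ker(1-A)=\{0\}$ because $\Re\langle(1-A)x,x\rangle=|x|^{2}$ forces $x=0$; hence $\ran(1+A)=H$, and symmetrically $\ran(1-A)=H$. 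By the Lumer--Phillips characterization this is exactly m-accretivity.

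The main obstacle, such as it is, is the reverse inclusion $\dom(A^{*})\subseteq\dom(A)$, and within that the identity $C^{**}=C$ which is where the closedness hypothesis on $C$ is actually used; everything else is a formal manipulation. One should also be a little careful that the ``natural'' domain $\dom(C)\oplus\dom(C^{*})$ is indeed the maximal one on which $A$ makes sense, but this is precisely what the adjoint computation confirms.
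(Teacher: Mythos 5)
Your proof is correct and follows essentially the same route as the paper: the paper simply invokes the general identity $\bigl(\begin{smallmatrix}0 & B_{2}\\ B_{1} & 0\end{smallmatrix}\bigr)^{*}=\bigl(\begin{smallmatrix}0 & B_{1}^{*}\\ B_{2}^{*} & 0\end{smallmatrix}\bigr)$ for densely defined $B_{1},B_{2}$ (applied with $B_{1}=-C$, $B_{2}=C^{*}$, using $C^{**}=C$), and your component-wise adjoint computation is exactly a direct verification of that identity. You additionally write out the accretivity and range arguments for m-accretivity, which the paper leaves to the reader; these steps are also correct.
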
 \begin{proof}
The claim follows once we realize that for densely defined operators
$B_{1},B_{2}$ acting in appropriate Hilbert space, we have $\begin{pmatrix}0 & B_{1}^{*}\\
B_{2}^{*} & 0
\end{pmatrix}=\begin{pmatrix}0 & B_{2}\\
B_{1} & 0
\end{pmatrix}^{*}$.\end{proof}

\begin{lem}\label{lem:Mcom} For all $\nu\in\mathbb{R}$, we have
\[
\partial_{0,\nu}\mathcal{M}\subseteq\mathcal{M}\partial_{0,\nu}-\mathcal{M}'.
\]
\end{lem}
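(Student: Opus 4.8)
The plan is to verify the operator inclusion $\partial_{0,\nu}\mathcal{M}\subseteq\mathcal{M}\partial_{0,\nu}-\mathcal{M}'$ by a direct computation on the block structure, reducing everything to the scalar/abstract multiplication operator $b(\mathrm{m})$ acting on $L^2_\nu(\mathbb{R};H_2)$. Since $\mathcal{M}=\operatorname{diag}(P,b(\mathrm{m}))$ with $P\in L(H_1)$ a fixed (time-independent) bounded operator, the first block is trivial: on $\dom(\partial_{0,\nu})$ we have $\partial_{0,\nu}P=P\partial_{0,\nu}$ because $P$ commutes with the time derivative (it acts pointwise in time and is constant in $t$), and the corresponding block of $\mathcal{M}'$ is $0$. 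So the entire content of the lemma is the statement that on $\dom(\partial_{0,\nu})\cap\dom(b(\mathrm m)\partial_{0,\nu})$, $\partial_{0,\nu}\bigl(b(\mathrm{m})\phi\bigr)=b(\mathrm{m})\partial_{0,\nu}\phi - b'(\mathrm m)\phi$, which is precisely the product rule $\partial_0(b\phi)=b'\phi+b\,\partial_0\phi$.

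First I would fix $\nu\in\mathbb{R}$ and take $\phi\in\dom(\partial_{0,\nu}\mathcal{M})$, i.e.\ $\phi\in L^2_\nu(\mathbb{R};H)$ with $\mathcal{M}\phi\in\dom(\partial_{0,\nu})=H^1_\nu(\mathbb{R};H)$. Writing $\phi=(u,q)$, this means $Pu\in H^1_\nu(\mathbb{R};H_1)$ and $b(\mathrm{m})q\in H^1_\nu(\mathbb{R};H_2)$. For the first component, since $P$ is a bounded, time-independent operator, $Pu\in H^1_\nu$ forces $Pu$ to be weakly differentiable, and I would argue that in fact one may simply record $\partial_{0,\nu}(Pu)=P\partial_{0,\nu}(Pu)$ — more carefully, the relevant identity needed for the inclusion is just that the first block of $\partial_{0,\nu}\mathcal M\phi$ equals $P$ times the first block of $\partial_{0,\nu}\phi$ minus $0$; this holds because differentiation and the constant bounded multiplier $P$ commute on the common domain. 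For the second component, since $b=a^{-1}$ is the pointwise inverse of a bounded Lipschitz strongly-measurable family with $a(t)=a(t)^*\ge c$, standard facts (see \cite[Section 2]{KPSTW14_OD} and the analogous computations around Theorem~\ref{thm:h}) give that $b$ is Lipschitz with bounded weak derivative $b'$, and that for $q$ with $b(\mathrm m)q\in H^1_\nu$ one has the Leibniz rule $\partial_{0,\nu}(b(\mathrm m)q)=b(\mathrm m)\partial_{0,\nu}q + b'(\mathrm m)q$ in the weak sense — in particular $q\in\dom(\partial_{0,\nu})$ whenever $b(\mathrm m)q\in\dom(\partial_{0,\nu})$, using that $b(\mathrm m)$ is boundedly invertible with inverse $a(\mathrm m)$. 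Rearranging gives $\partial_{0,\nu}(b(\mathrm m)q)=b(\mathrm m)\partial_{0,\nu}q-(-b'(\mathrm m))q$, which matches the second diagonal block of $\mathcal{M}\partial_{0,\nu}-\mathcal{M}'$ since the $(2,2)$-entry of $\mathcal{M}'$ is $b'(\mathrm m)$ and the $(2,2)$-entry of $\mathcal{N}$ is $-b'(\mathrm m)$; note $\mathcal{M}'$ above is $\operatorname{diag}(0,b'(\mathrm m))$ so the relevant sign bookkeeping must be done against the convention $\partial_{0,\nu}\mathcal M\subseteq\partial_{0,\nu}\mathcal M_\mu-\mathcal M'_\mu$ of Assumption~\ref{ass:1} — here it reads $\mathcal M\partial_{0,\nu}-\mathcal M'$ which is the same once one identifies the $b'$ terms.

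Assembling the two blocks: for $\phi=(u,q)\in\dom(\partial_{0,\nu}\mathcal M)$ we get $q\in\dom(\partial_{0,\nu})$ and $u\in\dom(\partial_{0,\nu})$ (the latter since $P u\in\dom(\partial_{0,\nu})$ and one checks $(1-P)u$ lies in $L^2_\nu$ and, using that $A$-coupling is irrelevant here, $u\in H^1_\nu$; in the applications $u$ is indeed differentiable because the full equation forces it, but for the bare inclusion it suffices to treat $\phi\in\mathring C_\infty(\mathbb R;H)$ as in Assumption~\ref{ass:1}, then extend by closedness). Thus $\phi\in\dom(\mathcal M\partial_{0,\nu})$ and $\partial_{0,\nu}\mathcal M\phi=\mathcal M\partial_{0,\nu}\phi-\mathcal M'\phi$, which is the claimed inclusion.

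The main obstacle is not any deep fact but rather the careful domain bookkeeping: one must be precise about which of $u,q$ are a priori differentiable given only $\mathcal M\phi\in H^1_\nu$, and about the weak product rule for the abstract multiplication operator $b(\mathrm m)$ with a merely Lipschitz (not $C^1$) coefficient. The cleanest route, matching the paper's style, is to verify the identity first on the core $\mathring C_\infty(\mathbb R;H)$, where all manipulations are the classical product rule applied pointwise in $t$, and then observe that $\partial_{0,\nu}\mathcal M$ is the closure of its restriction to that core (which is exactly what Proposition~\ref{prop:core} and the surrounding discussion provide), so the operator inclusion passes to the closure; boundedness of $\mathcal M$, $\mathcal M'$ on $L^2_\nu$ makes the limiting argument routine.
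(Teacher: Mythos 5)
Your proposal assembles the right raw ingredients --- Lipschitz continuity of $b=a^{-1}$, the product rule for multiplication by a Lipschitz operator family, and the commutation $P\partial_{0,\nu}\subseteq\partial_{0,\nu}P$ --- and these are exactly what the paper's proof uses. However, your argument for the inclusion read literally (with $\partial_{0,\nu}\mathcal{M}$ on the small side) does not go through, and the patches you offer are invalid. First, the domain step: from $\mathcal{M}\phi=(Pu,b(\mathrm{m})q)\in\dom(\partial_{0,\nu})$ you conclude $u\in\dom(\partial_{0,\nu})$ ``since $Pu\in\dom(\partial_{0,\nu})$ and $(1-P)u\in L^2_\nu$''. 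This is false whenever $P$ has a nontrivial kernel: for $P=0$ (the heat case, used in Theorem \ref{thm:h}) every $u\in L_\nu^2$ satisfies $Pu=0\in H^1_\nu$, yet $u$ need not be weakly differentiable. Second, the sign: on $\mathring{C}_{\infty}(\mathbb{R};H)$ the product rule you invoke gives $\partial_{0,\nu}\mathcal{M}\phi=\mathcal{M}\partial_{0,\nu}\phi+\mathcal{M}'\phi$ with $\mathcal{M}'=\operatorname{diag}(0,b'(\mathrm{m}))$, not the minus sign you assert in your final identity; you notice the tension (``sign bookkeeping'') but then declare the two forms ``the same'', which they are not. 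Third, the fallback on closure does not repair this: Proposition \ref{prop:core} provides a core for $(\partial_{0,\mu}\mathcal{M})^{*}$, not for $\partial_{0,\nu}\mathcal{M}$, and the right-hand operator $\mathcal{M}\partial_{0,\nu}-\mathcal{M}'$ is not closed in general (again because of the $P$-block), so an identity checked on test functions cannot be ``extended by closedness'' to all of $\dom(\partial_{0,\nu}\mathcal{M})$.

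What the paper's proof actually establishes --- and what is needed to verify Assumption \ref{ass:1} --- is the inclusion with the factors in the other order, $\mathcal{M}\partial_{0,\nu}\subseteq\partial_{0,\nu}\mathcal{M}-\mathcal{M}'$; the displayed formula in Lemma \ref{lem:Mcom} should be read in that (Assumption \ref{ass:1}) form, since as you implicitly discovered it cannot hold in general the other way around. In that direction there is no domain difficulty and no sign problem: for $\phi$ in $\dom(\partial_{0,\nu})$ (it suffices to argue on test functions), boundedness and time-independence of $P$ give $\partial_{0,\nu}P\phi=P\partial_{0,\nu}\phi$ in the first block, while the resolvent-type estimate $\|b(t)-b(s)\|\le c^{-2}\|a\|_{\textnormal{Lip}}|s-t|$ shows $b$ is Lipschitz, whence $(b\phi)'=b'\phi+b\phi'$ by \cite[Lemma 2.1]{PTWW13_NA}; rearranging yields $\mathcal{M}\partial_{0,\nu}\phi=\partial_{0,\nu}\mathcal{M}\phi-\mathcal{M}'\phi$ with $\mathcal{M}\phi\in\dom(\partial_{0,\nu})$. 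No invertibility of the $\mathcal{M}$-blocks, no core argument and no closure procedure is required; your proof should be reorganised to prove exactly this statement.
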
 \begin{proof} Let $s,t\in\mathbb{R}$. Then we compute
\begin{align*}
\|b(t)-b(s)\| & =\|a(t)^{-1}-a(s)^{-1}\|\\
 & =\|a(t)^{-1}\left(a(s)-a(t)\right)a(s)^{-1}\|\\
 & \leq\frac{1}{c^{2}}\|a(s)-a(t)\|\leq\frac{1}{c^{2}}\|a\|_{\textnormal{Lip}}|s-t|.
\end{align*}
Thus, $b$ is Lipschitz continuous and $H_{2}$ is separable, thus,
$b\phi$ is weakly differentiable for all $\phi\in\mathring{C}_{\infty}(\mathbb{R};H_{2})$
and 
\[
(b\phi)'=b'\phi+b\phi',
\]
where $b'$ is the strong derivative defined by the derivative of
$b'(t)x\coloneqq(b(\cdot)x)'(t)$ for almost all $t\in\mathbb{R}$,
$x\in H_{2}$, see also \cite[Lemma 2.1]{PTWW13_NA}. Since $P\partial_{0,\nu}\subseteq\partial_{0,\nu}P$,
the claim follows. \end{proof}

A particular observation in the latter proof is that $b$ is Lipschitz continuous
and $\|b\|_{\textnormal{Lip}}\leq\frac{1}{c^{2}}\|a\|_{\textnormal{Lip}}$.

\begin{lem}\label{lem:pdwh} There exist $k,\nu>0$ such that for
all $t\in\mathbb{R}$ and $\phi\in\mathring{C}_{\infty}(\mathbb{R};H)$
we have 
\[
\Re\langle\left(\partial_{0,\nu}\mathcal{M}+\mathcal{N}\right)\phi,Q_{t}\phi\rangle_{L_{\nu}^{2}}\geq k\langle Q_{t}\phi,\phi\rangle_{L_{\nu}^{2}}.
\]
\end{lem}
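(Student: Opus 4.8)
The plan is to exploit the block-diagonal structure of $\mathcal{M}$ and $\mathcal{N}$ together with the orthogonal decomposition $H=H_{1}\oplus H_{2}$. First I would split $\phi=(\phi_{1},\phi_{2})$ with $\phi_{i}\in\mathring{C}_{\infty}(\mathbb{R};H_{i})$ and compute $(\partial_{0,\nu}\mathcal{M}+\mathcal{N})\phi$ directly on $\mathring{C}_{\infty}$: using $P\partial_{0,\nu}\subseteq\partial_{0,\nu}P$ and the product rule $\partial_{0,\nu}(b(\mathrm{m})\phi_{2})=b'(\mathrm{m})\phi_{2}+b(\mathrm{m})\partial_{0,\nu}\phi_{2}$ from the proof of Lemma~\ref{lem:Mcom}, the $b'$-contributions cancel against the second diagonal entry of $\mathcal{N}$, leaving
\[
(\partial_{0,\nu}\mathcal{M}+\mathcal{N})\phi=\bigl(P\partial_{0,\nu}\phi_{1}+(1-P)\phi_{1},\ b(\mathrm{m})\partial_{0,\nu}\phi_{2}\bigr).
\]
Since $Q_{t}$ acts in the time variable and respects the decomposition, $\langle Q_{t}\phi,\phi\rangle_{L_{\nu}^{2}}=\langle Q_{t}\phi_{1},\phi_{1}\rangle_{L_{\nu}^{2}}+\langle Q_{t}\phi_{2},\phi_{2}\rangle_{L_{\nu}^{2}}$, so it suffices to bound each of the two blocks from below by a positive multiple of the corresponding summand.

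For the first block I would use that $P=P^{*}=P^{2}$ commutes with both $Q_{t}$ and $\partial_{0,\nu}$, whence $\langle P\partial_{0,\nu}\phi_{1},Q_{t}\phi_{1}\rangle_{L_{\nu}^{2}}=\langle\partial_{0,\nu}(P\phi_{1}),Q_{t}(P\phi_{1})\rangle_{L_{\nu}^{2}}$ while $\langle(1-P)\phi_{1},Q_{t}\phi_{1}\rangle_{L_{\nu}^{2}}=\|Q_{t}(1-P)\phi_{1}\|_{L_{\nu}^{2}}^{2}$. Combined with the elementary identity
\[
\Re\langle\partial_{0,\nu}\psi,Q_{t}\psi\rangle_{L_{\nu}^{2}}=\tfrac12|\psi(t)|^{2}e^{-2\nu t}+\nu\|Q_{t}\psi\|_{L_{\nu}^{2}}^{2},
\]
obtained from $\Re\langle\psi'(s),\psi(s)\rangle=\tfrac12\tfrac{\d}{\d s}|\psi(s)|^{2}$ and an integration by parts over $(-\infty,t]$ (the boundary term at $-\infty$ vanishing by compact support), this bounds the first block below by $\min\{\nu,1\}\langle Q_{t}\phi_{1},\phi_{1}\rangle_{L_{\nu}^{2}}$.

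The main work is the second block, where $b(\mathrm{m})$ and $\partial_{0,\nu}$ do not commute. Here I would start from $\tfrac{\d}{\d s}\langle b(s)\phi_{2}(s),\phi_{2}(s)\rangle=\langle b'(s)\phi_{2}(s),\phi_{2}(s)\rangle+2\Re\langle b(s)\phi_{2}'(s),\phi_{2}(s)\rangle$ (valid since $b(s)=b(s)^{*}$), integrate $2\Re\langle b(s)\phi_{2}'(s),\phi_{2}(s)\rangle e^{-2\nu s}$ over $(-\infty,t]$, and integrate the total-derivative term by parts to obtain
\[
2\Re\langle b(\mathrm{m})\partial_{0,\nu}\phi_{2},Q_{t}\phi_{2}\rangle_{L_{\nu}^{2}}=\langle b(t)\phi_{2}(t),\phi_{2}(t)\rangle e^{-2\nu t}+2\nu\langle b(\mathrm{m})Q_{t}\phi_{2},Q_{t}\phi_{2}\rangle_{L_{\nu}^{2}}-\langle b'(\mathrm{m})Q_{t}\phi_{2},Q_{t}\phi_{2}\rangle_{L_{\nu}^{2}}.
\]
Then $b(t)=a(t)^{-1}\ge0$ makes the boundary term nonnegative, $b(s)\ge(\sup_{r\in\mathbb{R}}\|a(r)\|)^{-1}=:c'>0$ bounds the middle term from below, and $\|b'\|_{\infty}\le\|b\|_{\textnormal{Lip}}\le c^{-2}\|a\|_{\textnormal{Lip}}$ (see the remark after Lemma~\ref{lem:Mcom}) controls the last term, so the second block is bounded below by $(\nu c'-\tfrac12 c^{-2}\|a\|_{\textnormal{Lip}})\langle Q_{t}\phi_{2},\phi_{2}\rangle_{L_{\nu}^{2}}$. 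Choosing $\nu\ge1$ so large that $\nu c'-\tfrac12 c^{-2}\|a\|_{\textnormal{Lip}}\ge1$ and setting $k\coloneqq1$ then yields the assertion; moreover the same $\nu$ and $k$ work verbatim for every $\mu\ge\nu$, which is what Assumption~\ref{ass:1} requires. The only genuinely delicate point is the justification of the product rule and the ensuing integration by parts for $s\mapsto b(s)\phi_{2}(s)$; this rests on the Lipschitz continuity of $b$ and the separability of $H_{2}$ and is precisely the ingredient already invoked in the proof of Lemma~\ref{lem:Mcom} via \cite[Lemma~2.1]{PTWW13_NA}.
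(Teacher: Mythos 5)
Your proposal is correct and follows essentially the same route as the paper: reduce by the block-diagonal structure, handle the $P$-block with the standard energy identity, and handle the $b$-block via the same identity together with $b(t)\geq c'>0$ and $\|b'(\mathrm{m})\|\leq c^{-2}\|a\|_{\textnormal{Lip}}$. The only difference is cosmetic: you first cancel the $b'$-terms using the product rule from Lemma~\ref{lem:Mcom} and then derive the energy identity by a direct integration by parts, whereas the paper obtains the same identity by citing \cite[Lemma 2.6]{PTWW13_NA} applied to $M_0=b(\mathrm{m})$, $M_1=-b'(\mathrm{m})$.
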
 \begin{proof} Since $\mathcal{M},\mathcal{M}',\mathcal{N}$
are diagonal block operator matrices, it suffices to restrict ourselves
to test functions $\phi$ in $\mathring{C}_{\infty}(\mathbb{R};H_{1})$
and $\mathring{C}_{\infty}(\mathbb{R};H_{2})$. We can apply \cite[Lemma 2.6]{PTWW13_NA}
to $M_{0}(\mathrm{m})=(1-P)$, $M_{1}(\mathrm{m})=P$, $A=0$ to obtain
for all $t\in\mathbb{R}$, $\nu>0$: 
\[
\Re\langle\partial_{0,\nu}P\phi+(1-P)\phi,Q_{t}\phi\rangle_{L_{\nu}^{2}}=\frac{1}{2}\langle\phi(t),P\phi(t)\rangle_{H_{1}}e^{-2\nu t}+\langle\nu P\phi+(1-P)\phi,Q_{t}\phi\rangle_{L_{\nu}^{2}}.
\]
Thus, for all $\phi\in\mathring{C}_{\infty}(\mathbb{R};H_{1})$ 
\[
\Re\langle\partial_{0,\nu}P\phi+(1-P)\phi,Q_{t}\phi\rangle_{L_{\nu}^{2}}\geq\min\{\nu,1\}\langle\phi,Q_{t}\phi\rangle_{L_{\nu}^{2}}.
\]
Next, again by \cite[Lemma 2.6]{PTWW13_NA} this time applied to $M_{0}(\mathrm{m})=b(\mathrm{m})$,
$M_{1}(\mathrm{m})=-b'(\mathrm{m})$, and $A=0$, we compute 
\[
\Re\langle\partial_{0,\nu}b(\mathrm{m})\phi-b'(\mathrm{m})\phi,Q_{t}\phi\rangle_{L_{\nu}^{2}}  =\frac{1}{2}\langle\phi(t),b(t)\phi(t)\rangle_{H_{1}}e^{-2\nu t}+\langle\nu b(\mathrm{m})\phi-\frac{1}{2}b'(\mathrm{m})\phi,Q_{t}\phi\rangle_{L_{\nu}^{2}},
\]
where we used that $b'(t)$ is selfadjoint as $b(s)$ is selfadjoint
for all $s\in\mathbb{R}$. We observe that the non-negativity of $a$
implies the same for $b$; more precisely we get $b(t)\geq\frac{c}{\|a(t)\|^{2}}\geq\frac{c}{\sup_{t\in\mathbb{R}}\|a(t)\|^{2}}\eqqcolon c'>0$.
Moreover, we note that $\|b'(\mathrm{m})\|\leq\|b\|_{\textnormal{Lip}}$,
by \cite[Lemma 2.1]{PTWW13_NA}. Hence, $\|b'(\mathrm{m})\|\leq\frac{1}{c^{2}}\|a\|_{\textnormal{Lip}}$.
Thus, we deduce that 
\begin{align*}
\Re\langle\partial_{0,\nu}b(\mathrm{m})\phi-b'(\mathrm{m})\phi,Q_{t}\phi\rangle_{L_{\nu}^{2}} & \geq\langle\nu b(\mathrm{m})\phi-\frac{1}{2}b'(\mathrm{m})\phi,Q_{t}\phi\rangle_{L_{\nu}^{2}}\\
 & =\langle(\nu b(\mathrm{m})-\frac{1}{2}b'(\mathrm{m}))Q_{t}\phi,Q_{t}\phi\rangle_{L_{\nu}^{2}}\\
 & \geq\left(\nu c'-\frac{1}{c^{2}}\|a\|_{\textnormal{Lip}}\right)\langle Q_{t}\phi,Q_{t}\phi\rangle_{L_{\nu}^{2}},
\end{align*}
which yields the assertion. \end{proof}

The Lemmas \ref{lem:Asks}, \ref{lem:Mcom}, and \ref{lem:pdwh} finally
yield the applicability of Theorem \ref{thm:linst}, so that \eqref{eq:wave1sto}
is well-posed. In fact, we have the following result:

\begin{theorem}\label{thm:wh} There exists $\nu>0$ such that for all $f\in L_{\nu,\textnormal{pr}}^2(\mathbb{R};L^2(\mathbb{P};H_1))$ we find a uniquely determined $u_f \in L_{\nu,\textnormal{pr}}^2(\mathbb{R};L^2(\mathbb{P};H_1))$ and $q_f \in L_{\nu,\textnormal{pr}}^2(\mathbb{R};L^2(\mathbb{P};H_2))$ such that
\[
  \left(\overline{ \partial_{0,\nu}\begin{pmatrix} P & 0 \\ 0 & b(\mathrm{m})
   \end{pmatrix} + \begin{pmatrix} 1-P & 0 \\ 0 & -b'(\mathrm{m}) \end{pmatrix} + \begin{pmatrix} 0 & \dive \\ \interior{\grad} & 0 \end{pmatrix} } \right)  
   \begin{pmatrix} u_f \\ q_f \end{pmatrix}  = \begin{pmatrix} \partial_{0,\nu}^{-1} f + \mathcal{I}^{X,\alpha}\circ \tilde{\sigma}(u_f) \\ 0\end{pmatrix}.
\]
If $f=0$ on $(-\infty,t]$ for some $t\in \mathbb{R}$, then $u_f=0$ and $q_f=0$ on $(-\infty,t]$.
\end{theorem}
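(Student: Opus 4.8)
The plan is to obtain both assertions from Theorem~\ref{thm:linst}, applied to the block data $H=H_{1}\oplus H_{2}$ and $\mathcal{M},\mathcal{N},\mathcal{M}',A$ singled out just before Lemma~\ref{lem:Asks}. First I would record that $\mathcal{M},\mathcal{M}',\mathcal{N}$ are abstract multiplication operators induced by bounded, strongly measurable $L(H)$-valued functions --- here one uses that $b=a^{-1}$ is bounded and Lipschitz continuous (as noted after Lemma~\ref{lem:Mcom}) and that $\|b'(\mathrm{m})\|\le\|b\|_{\textnormal{Lip}}$ --- so that they belong to $L_{\textnormal{ev},\nu}(H)$ for every $\nu>0$ and trivially contain $\mathring{C}_{\infty}(\mathbb{R};H)$ in their domains. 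The remaining three hypotheses of Assumption~\ref{ass:1} are precisely what the preceding lemmas deliver: $A$ is skew-selfadjoint, hence densely defined and m-accretive, by Lemma~\ref{lem:Asks} (applied with $C=-\interior{\grad}$, so that $C^{*}=\dive$); the intertwining relation $\mathcal{M}\partial_{0,\mu}\subseteq\partial_{0,\mu}\mathcal{M}_{\mu}-\mathcal{M}'_{\mu}$ holds for all $\mu\ge\nu$ by Lemma~\ref{lem:Mcom}; and there are $k,\nu_{0}>0$ realising the coercivity estimate for $\partial_{0,\nu}\mathcal{M}+\mathcal{N}$ for $\nu\ge\nu_{0}$ by Lemma~\ref{lem:pdwh}. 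Hence Assumption~\ref{ass:1} is satisfied for a suitable $\nu>0$.

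It remains to route the stochastic term correctly. Since $\sigma\colon H_{1}\to L\subseteq L(G,H_{1})$ and $\sigma(0)=0$, the map $\hat\sigma\colon H_{1}\oplus H_{2}\to L(G,H_{1}\oplus H_{2})$, $(v,p)\mapsto\bigl(g\mapsto(\sigma(v)g,0)\bigr)$, is Lipschitz continuous with the Lipschitz constant of $\sigma$ and vanishes at $0$, and with this choice $\mathcal{I}^{X,\alpha}\circ\tilde{\hat\sigma}$ acts in the first component only. Applying Theorem~\ref{thm:linst} to $\mathcal{M},\mathcal{N},A$ and $\hat\sigma$, with the right-hand side $(\partial_{0,\nu}^{-1}f,0)$ --- which is predictable because $\partial_{0,\nu}^{-1}$ is the causal operator of time integration (Section~\ref{sec:tdem}) and causal operators preserve predictability by Theorem~\ref{thm:causal,adap} --- yields a unique $(u_{f},q_{f})\in L_{\nu,\textnormal{pr}}^{2}(\mathbb{R};L^{2}(\mathbb{P};H_{1}\oplus H_{2}))$ with $\overline{(\partial_{0,\nu}\mathcal{M}+\mathcal{N}+A)}(u_{f},q_{f})=(\partial_{0,\nu}^{-1}f,0)+(\mathcal{I}^{X,\alpha}\circ\tilde\sigma(u_{f}),0)$. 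Reading off the two components gives exactly the equation and uniqueness statement of Theorem~\ref{thm:wh}.

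For the support statement I would use the causality of the solution map $f\mapsto(u_{f},q_{f})$, which is also part of the conclusion of Theorem~\ref{thm:linst}. By Proposition~\ref{prop:causal}\,(iv), with the standard resolution $Q_{t}=\chi_{(-\infty,t]}$, causality gives $Q_{t}\circ(f\mapsto(u_{f},q_{f}))=Q_{t}\circ(f\mapsto(u_{f},q_{f}))\circ Q_{t}$ for every $t\in\mathbb{R}$. If $f=0$ on $(-\infty,t]$, i.e.\ $Q_{t}f=0$, then $Q_{t}(u_{f},q_{f})=Q_{t}(u_{0},q_{0})$, where $(u_{0},q_{0})$ denotes the solution for the zero right-hand side; since $\sigma(0)=0$ gives $\tilde\sigma(0)=0$ and hence $\mathcal{I}^{X,\alpha}\circ\tilde\sigma(0)=0$, the pair $(0,0)$ solves the equation with $f=0$, so by uniqueness $(u_{0},q_{0})=(0,0)$, whence $u_{f}=0$ and $q_{f}=0$ on $(-\infty,t]$.

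I do not expect a genuine obstacle here: the analytic substance is carried entirely by Lemmas~\ref{lem:Asks}, \ref{lem:Mcom}, \ref{lem:pdwh} and by Theorem~\ref{thm:linst}. The only points requiring a little care are the bookkeeping that confines the noise to the $H_{1}$-slot, the observation that $\partial_{0,\nu}^{-1}f$ remains predictable, and expressing the support property as a consequence of causality combined with uniqueness for the zero right-hand side.
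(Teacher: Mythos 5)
Your proposal is correct and follows essentially the same route as the paper: the paper also identifies $H=H_{1}\oplus H_{2}$ with the stated block operators, verifies Assumption~\ref{ass:1} through Lemmas~\ref{lem:Asks}, \ref{lem:Mcom} and \ref{lem:pdwh}, and then invokes Theorem~\ref{thm:linst}, with the support statement coming from the causality of the solution map together with $\sigma(0)=0$ and uniqueness. Your additional bookkeeping (the embedding of the noise into the first component and the predictability of $\partial_{0,\nu}^{-1}f$ via Theorem~\ref{thm:causal,adap}) is exactly the identification $x\mapsto x\oplus 0$ the paper uses, so there is nothing to correct.
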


\begin{remark}\label{rem:distr} Using the notion of extrapolation spaces,
we can make sense of the expression $\sigma(u(t))\dot{X}(t)$, which
might seem to be quite formal at first glance. For this we define
the spaces $L_{\nu,\mathrm{pr},-1}^{2}(\R;L^{2}(\P;H))$ as the completion
of $L_{\nu,\mathrm{pr}}^{2}(\R;L^{2}(\P;H))$ with respect to the
norm ${\left|\partial_{0,\nu}^{-1}\cdot\right|}_{L_{\nu}^{2}(\R;L^{2}(\P;H))}$ and
$H_{-1}(A)$ as the completion of $H$ with respect to $|(1+A)^{-1}\cdot|_{H}$.
Now, fix $f$ and let $(u,q)$ solve \eqref{eq:wave1sto}. Then we
have that 
\[
\left(\overline{\partial_{0,\nu}\begin{pmatrix}P & 0\\
0 & b(\mathrm{m})
\end{pmatrix}+\begin{pmatrix}(1-P) & 0\\
0 & -b'(\mathrm{m})
\end{pmatrix}+\begin{pmatrix}0 & C^{*}\\
-C & 0
\end{pmatrix}}\right)\begin{pmatrix}u\\
q
\end{pmatrix} =\begin{pmatrix}\partial_{0}^{-1}f+\mathcal{I}^{X,\alpha}\circ\tilde{\sigma}(u)\\
0
\end{pmatrix}
\]
as an equation in $L_{\nu,\mathrm{pr}}^{2}(\mathbb{R};L^{2}(\P;H))$
for some large enough $\nu>0$. Using the continuous extensions of
$\partial_{0,\nu}$ and $A=\begin{pmatrix}0 & C^{*}\\
-C & 0
\end{pmatrix}$ with values in $L_{\nu,\mathrm{pr},-1}^{2}\big(\mathbb{R};L^{2}(\P;H)\big)$
and $L_{\nu,\mathrm{pr}}^{2}\big(\mathbb{R};L^{2}(\P;H_{-1}(A))\big)$, respectively,
we deduce that the equation satisfied by $(u,q)$ can be written without
the closure bar. Moreover, one can differentiate line by line to obtain
\[
\partial_{0,\nu}^{2}Pu+\partial_{0,\nu}(1-P)u+C^{*}\partial_{0,\nu}q=f+\partial_{0}\mathcal{I}^{X,\alpha}\circ\tilde{\sigma}(u)
\]
and 
\[
a(\mathrm{m})^{-1}\partial_{0,\nu}q=Cu,
\]
which in turn formally gives back the system one started out with.
\end{remark}

We remark that the well-posedness of \eqref{eq:wave1sto} is also
covered by Theorem \ref{thm:wp_stoch_nonlin}, as $\mathcal{M}$ and
$\mathcal{N}$ are given as abstract multiplication operators. Hence,
we could generalize \eqref{eq:wave1sto} by replacing the operator
$A$ by a maximal monotone relation. This allows for instance the
treatment of certain hysteresis effects in the theory of plasticity
(see \cite{Trostorff2012,Trostorff2014}) or Section \ref{sec:nex} above.

\begin{remark} We shall comment on the limitations of the approach at hand.

(a) First of all, the developed theory is a Hilbert space approach and so certain nonlinear equations as for instance the ones in \cite{Krylov1999} cannot be treated right away. In fact, the deterministic solution theory of evolutionary equations in the form discussed in this manuscript is developed for the Hilbert space case only, as of yet, that is. We shall on the other hand emphasise that our considerations do not need any assumptions on the shape of the underlying physical domain. For some results in the deterministic Banach space case we refer to \cite{Weh15}.

(b) The non-autonomous equations discussed here are formulated in a way that the unbounded operator (relation) is independent of the time variable. Again, this is already visible in the deterministic case. A treatment of spdes viewing the spatial operator being time-dependent focussing on explicit (stochastic) partial differential equations can be found in \cite{LR15}. The unbounded operators are then considered to be of elliptic type. This provides a larger class of equations with elliptic and time-dependent spatial operator, which is accessible with \cite{LR15}. On the other hand, that approach makes it difficult to handle the full Maxwell system, see Section \ref{sec:sle}.

\end{remark}

% Use this code if you wish to generate your bibliography with BibTeX;
% please replace first the string "demo" below with the name(s) of
% the BibTeX data base(s) you want to use.
% The resulting bibliography-output (the contents of the .bbl file)
% must be pasted into this file before submission.
% 
% \bibliographystyle{gamm}
% \bibliography{demo}
% 
% Replace the following example bibliography with your references
% before submission:

\end{document}